\definecolor{red}{rgb}{1.00,0.00,0.00}
{\numberwithin{equation}{section}
\setlength{\parindent}{1em}

\newtheorem{theorem}{Theorem}[section]
\newtheorem{lemma}{Lemma}[section]
\newtheorem{remark}{Remark}[section]

\newcommand{\normmm}[1]{{\left\vert\kern-0.25ex\left\vert
\kern-0.25ex\left\vert #1
    \right\vert\kern-0.25ex\right\vert\kern-0.25ex\right\vert}}

\geometry{left=3cm,right=3cm,top=2.5cm,bottom=2cm}

\begin{document}           % End of preamble and beginning of text.

\title{Locking free staggered DG method for the Biot system of poroelasticity on general polygonal meshes}
\author{Lina Zhao\footnotemark[1]\qquad
\;Eric Chung\footnotemark[2]\qquad
\;Eun-Jae Park\footnotemark[3]}
\renewcommand{\thefootnote}{\fnsymbol{footnote}}
\footnotetext[1]{Department of Mathematics,The Chinese University
of Hong Kong, Hong Kong SAR, China. ({lzhao@math.cuhk.edu.hk}).}
\footnotetext[2]{Department of Mathematics, The Chinese University
of Hong Kong, Hong Kong SAR, China. ({tschung@math.cuhk.edu.hk}).}
\footnotetext[3]{School of Mathematics and Computing
(Computational Science and Engineering), Yonsei University, Seoul
03722, Korea. ({ejpark@yonsei.ac.kr}).} \maketitle

\begin{abstract}
 In this paper we propose and analyze a staggered discontinuous Galerkin method for a five-field formulation of the Biot system of poroelasticity on general polygonal meshes. Elasticity is equipped with stress-displacement-rotation formulation with weak stress symmetry for arbitrary polynomial orders, which extends the piecewise constant approximation developed in (L. Zhao and E.-J. Park, SIAM J. Sci. Comput. 42:A2158–A2181,2020). The proposed method is locking free and can handle highly distorted grids possibly including hanging nodes, which is desirable for practical applications. We prove the convergence estimates for the semi-discrete scheme and fully discrete scheme for all the variables in their natural norms. In particular, the stability and convergence analysis do not need a uniformly positive storativity coefficient. Moreover, to reduce the size of the global system, we propose a five-field formulation based fixed stress splitting scheme, where the linear convergence of the scheme is proved. Several numerical experiments are carried out to confirm the optimal convergence rates and the locking-free property of the proposed method.
\end{abstract}

\textbf{Keywords:} Staggered DG, General polygonal mesh, Locking-free, Fixed stress splitting, Weak symmetry, Biot system, Poroelasticity

\pagestyle{myheadings} \thispagestyle{plain}
\markboth{ZhaoChung} {Locking free SDG for poroelasticity}

\section{Introduction}

The Biot system of poroelasticity \cite{Biot41,Showalter00} models fluid flow within deformable porous media, and has been extensively studied in the literature due to its broad range of applications such as geosciences, carbon sequestration and biomedical applications. The system is a fully coupled system, consisting of an equilibrium equation for the solid and a mass balance equation for the fluid. A great amount of effort has been devoted to developing efficient numerical schemes for the Biot system. The two-field displacement-pressure formulation is studied by numerous numerical methods such as finite difference method \cite{Gaspar03}, finite volume method \cite{Nordbotten16}, finite element methods \cite{Murad92,Murad94,Murad96,Rodrigo16}, coupling of HHO method with SWIP method \cite{Boffi16}, weak Galerkin method \cite{HuMu18} and HDG method \cite{Fu19}. The three-field displacement-pressure-Darcy velocity formulation with various couplings of continuous and discontinuous Galerkin methods, and mixed finite
element methods is studied in \cite{Phillips07,Phillips072,Yi13,Wheeler14,Oyarzua16,LeeMardal17,Yi17,Lee18}. Very recently, a stabilized hybrid mixed finite element method uniformly stable with respect to the physical parameters is proposed in \cite{Niu20}. Alternatively, fully-mixed formulations of the Biot system has also drawn great attention \cite{Korsawe05,Yi14,Lee16,Ambartsumyan20}. Typical difficulty encountered in the design of numerical schemes for the Biot system is to avoid locking and to get estimates independent of the storativity coefficient.

Therefore, the purpose of this paper is to develop and analyze a staggered discontinuous Galerkin (DG) method on general polygonal meshes for the Biot system of poroelasticity based on a five-field formulation.
%Specifically, we discretize the elasticity equation by extending the recently developed staggered DG method for elasticity with weak stress symmetry \cite{LinaParkelasticity20} and discretize the Darcy equation by the staggered DG method developed in \cite{LinaParkdiffusion18}.
%Indeed, piecewise constant approximations are exploited in \cite{LinaParkelasticity20} for stress, velocity and rotation, where the symmetry of stress is weakly imposed via a carefully designed space for the approximation of rotation. The Darcy flow is discretized by the staggered DG method developed in \cite{LinaParkdiffusion18}.
%Staggered DG method developed in \cite{LinaParkdiffusion18,LinaParkelasticity20} is inspired by the works proposed in \cite{ChungEngquist06,ChungWave09}.
As a new generation of numerical schemes, staggered DG method offers many salient features such as the easy treatment of general polygonal elements possibly including hanging nodes, the local and global mass conservation and superconvergence. All these features make it a good candidate for practical applications, thus it has been extensively studied for a wide range of partial differential equations arising from physical applications \cite{ChungEngquist06,ChungWave09,ChungLee11,ChungKim13,KimChungStokes13,Chung14,Cheung15,Chung16,ChungDu17,ChungQiu17,Du18,
LinaParkdiffusion18,ZhaoParkShin19,LinaParkelasticity20,ZhaoPark20,ZhaoChungLam20,LinaKim20,LinaEricPark20}.
Therefore, it is of great importance to develop a staggered DG method for the Biot system of poroelasticity. To this end, we will first extend staggered DG method developed in \cite{LinaParkelasticity20} with piecewise constant approximations to arbitrary polynomial orders, where the symmetry of stress is weakly imposed via the carefully designed approximation space for rotation. Then it is combined with staggered DG method proposed in \cite{LinaParkdiffusion18} to discretize the Biot system of poroelasticity.

The resulting formulation is fully mixed with stress tensor, the fluid flux, the rotation, the displacement and the pressure as the unknowns. At a first sight this appears to increase the computational burden by introducing additional variables compared to methods that only involve displacement and pressure. However, the introduction of these variables enables us to achieve all the variables of physical interest directly without resorting to postprocessing, which is actually more accurate. We emphasize that the rotation variable introduced here is a scalar field, which can reduce the size of the global system. Another advantage of our method is that no further stabilization or penalty term is needed in contrast to other DG methods. In addition, local conservation is preserved on the dual mesh. It is worth mentioning that the mass matrix is block diagonal, which allows local elimination of the stress, velocity and rotation. As such, we can exploit local elimination to reduce the global saddle point system to displacement-pressure system. Since this procedure is similar to that proposed in \cite{Ambartsumyan20}, where block diagonal matrices can be generated thanks to the vertex quadrature rule utilized, we will not repeat here. Instead, we will introduce fixed stress splitting scheme to reduce the size of the global system. Fixed stress splitting scheme is a popular method for iteratively solving the Biot system, but has not been addressed for the formulation we exploited. Thus, we will propose a fixed stress splitting scheme for our formulation and prove the linear convergence of the given scheme. The main ingredient is to decompose the global system into two subproblem, then we can solve the flow problem and the mechanics problem separately. In particular, as mentioned before, the mass matrix is block diagonal, thus we can also apply local elimination for each problem, consequently, we can obtain a system which is solely based on displacement and pressure.

We analyze the unique solvability, stability and convergence estimates for the semidiscrete continuous-in-time and the fully discrete methods. Note that a different approach from \cite{LeeKim16} is pursued to prove the inf-sup condition for the elasticity equation. The optimal convergence rates for all the variables in their natural norms are analyzed. In addition, the error estimates are independent of the anisotropy of the permeability of the porous media. More importantly, the stability and convergence estimates are independent of the storativity coefficient $c_0$ and are valid for $c_0=0$. Moreover, like the linear elasticity \cite{LinaParkelasticity20}, the error bounds are robust for nearly incompressible materials. The numerical experiments including cantilever bracket benchmark problem presented verify the optimal convergence rates and locking free property of the proposed method.

The rest of the paper is organized as follows. In the next section, we describe the model problem and the associated weak formulation. Then in section~\ref{sec:discrete}, we derive the discrete formulation for the Biot system of poroelasticity and prove the unique solvability. Convergence analysis for semi-discrete and fully discrete methods is presented in section~\ref{sec:error}. In section~\ref{sec:fixed}, a fixed stress splitting scheme is introduced and analyzed. Several numerical experiments are carried out in section~\ref{sec:numerical} to confirm the proposed theories. Finally, a conclusion is given.

%the proposed method is appealing since it can
% our error estimates do not require a uniformly positive storage coefficient

\section{Model problem and weak formulation}\label{sec2}

In this section we introduce the poroelasticity system and its fully mixed weak formulation, which lays foundation for the derivation of our staggered DG scheme. Let $\Omega\subset \mathbb{R}^2$ be a simply connected bounded domain, occupied by a poroelastic media saturated with fluid. Then
the stress-strain constitutive relationship for the poroelastic body is given by
\begin{align}
A\sigma_e=\epsilon(u),\label{eq:constitutive}
\end{align}
where $\epsilon(u)=\frac{1}{2}(\nabla u+\nabla u^T)$ and $\sigma_e=2\mu \epsilon(u)+\lambda \nabla\cdot uI$. Here $A$ is a symmetric, bounded and uniformly positive definite linear operator representing the compliance tensor, $\sigma_e$ is the elastic strain, $u$ is the solid displacement.

In the case of a homogeneous and isotropic body, we have
\begin{align*}
A\sigma = \frac{1}{2\mu} (\sigma-\frac{\lambda}{2\mu+2\lambda}tr(\sigma)I),
\end{align*}
where $I$ is the $2\times 2$ identity matrix and $\mu>0$, $\lambda\geq 0$ are the Lam\'{e} coefficients. In this case the elastic stress is $\sigma_e=2\mu \epsilon(u)+\lambda \nabla\cdot u I$.
The poroelastic stress, which includes the effect of the fluid pressure $p$, is given as
\begin{align}
\sigma=\sigma_e-\alpha pI,\label{eq:sigmap}
\end{align}
where $0<\alpha\leq 1$ is the Biot-Willis constant.

Given a vector field $f$ representing the body forces and a source term $q$, the quasi-static Biot system that governs the fluid flow within the poroelastic media is given as follows:
\begin{align}\label{eq:model}
\begin{alignedat}{2}
-\nabla\cdot \sigma &=f \quad && {\rm in~} \Omega\times [0,T],\\
K^{-1}z+\nabla p&=0\quad && {\rm in~}\Omega\times [0,T],\\
\frac{\partial}{\partial t}(c_0p+\alpha \nabla\cdot u)+\nabla\cdot z&=q\quad && {\rm in~} \Omega\times [0,T],
\end{alignedat}
\end{align}
where $z$ is the Darcy velocity, $c_0\geq 0$ is a mass storativity coefficient, $T>0$ is a finite time and $K$ is a symmetric and positive definite tensor representing the permeability of the porous media divided by the fluid viscosity, i.e., there exist two strictly positive real numbers $K_1$ and $K_2$ satisfying for almost every $x\in \Omega$ and all $\xi\in \mathbb{R}^2$ such that $|\xi|=1$
\begin{equation*}
    0<K_1\leq K(x)\xi\cdot \xi\leq K_2.
\end{equation*}
Further, we introduce the global anisotropy ratio
\begin{align}
\varrho_B = \frac{K_2}{K_1}\label{eq:aniso-ratio}.
\end{align}
For the sake of simplicity, we assume that the system satisfies homogeneous Dirichlet boundary conditions for $u$ and $p$, i.e., $u=0,p=0$ on $\partial \Omega$ and the initial condition $p(x,0)=p^0$.
Now we are ready to derive the weak formulation, we have from \eqref{eq:constitutive} and \eqref{eq:sigmap}
\begin{align*}
\nabla\cdot u=tr(\epsilon(u))=tr(A\sigma_e)=trA(\sigma+\alpha pI),
\end{align*}
which can be substituted in the third equation of \eqref{eq:model} to get
\begin{align*}
\partial_t(c_0p+\alpha trA(\sigma+\alpha pI))+\nabla\cdot z=q.
\end{align*}
In the weakly symmetric stress formulation, we allow for $\sigma$ to be non-symmetric and introduce the Lagrange multiplier $\gamma = \mbox{rot}(u)/2 $, $\mbox{rot}(u)=-\frac{\partial u_1}{\partial y}+\frac{\partial u_2}{\partial x}$. The constitutive equation \eqref{eq:constitutive} can be rewritten as
\begin{align*}
A(\sigma+\alpha pI) = \nabla u-\gamma \chi,
\end{align*}
where
\begin{align*}
\chi = \left(
         \begin{array}{cc}
           0 & -1 \\
           1 & 0 \\
         \end{array}
       \right).
\end{align*}
By the preceding arguments, we can propose the following mixed weak formulation for the Biot system of poroelasticity: Find $(\sigma,u,\gamma,z,p)\in L^2(\Omega)^{2\times 2}\times H^1_0(\Omega)^2\times L^2(\Omega)\times L^2(\Omega)^2\times H^1_0(\Omega)$ such that
\begin{align}
(A(\sigma+\alpha pI),\psi)-(\nabla u,\psi)+(\gamma,as(\psi))&=0\hspace{0.9cm} \forall \psi\in L^2(\Omega)^{2\times 2},\label{eq:weak1}\\
(\sigma,\nabla v)&=(f,v)\quad \forall v\in H^1_0(\Omega)^2,\label{eq:weak2}\\
(as(\sigma), \eta)&=0\hspace{1cm}  \forall \eta\in L^2(\Omega),\label{eq:weak3}\\
(K^{-1}z,\xi)+(\nabla p,\xi)&=0\hspace{1cm}  \forall \xi\in L^2(\Omega)^2,\label{eq:weak4}\\
c_0(\partial_tp,w)+\alpha(\partial_tA(\sigma+\alpha pI),wI)-(z,\nabla w)&=(q,w)\quad \forall w\in H^1_0(\Omega)\label{eq:weak5},
\end{align}
where we use $(tr A\psi,w)=(A\psi,wI)$ and $as(\psi)=-\psi_{12}+\psi_{21}$ for $\psi\in L^2(\Omega)^{2\times 2}$.

For the initial condition we can set $z^0=-K\nabla p^0$ which naturally satisfies \eqref{eq:weak4}. We can also determine $(\sigma^0,u^0,\gamma^0)$ by solving the elasticity problem \eqref{eq:weak1}-\eqref{eq:weak3} with $p^0$ given as initial data. We refer to the initial data obtained by this procedure as compatible initial data.

Before closing this section, we also introduce some notations that will be used later. Let $D\subset \mathbb{R}^2$. By $(\cdot,\cdot)_D$, we denote the scalar product in $L^2(D):(p,q)_D:=\int_D p \;q\;dx$. When $D$ coincides with $\Omega$, the subscript $\Omega$ will be dropped. We use the same notation for the scalar product in $L^2(D)^2$ and in $L^2(D)^{2\times 2 }$. More precisely, $(\xi,w)_D:=\sum_{i=1}^2 (\xi^i,w^i)$ for $\xi,w\in L^2(D)^2$ and $(\psi,\zeta)_D:=\sum_{i=1}^2\sum_{j=1}^2 (\psi^{i,j},\zeta^{i,j})_D$ for $\psi,\zeta\in L^2(D)^{2\times 2}$. The associated norm is denoted by $\|\cdot\|_{0,D}$.
Given an integer  $m\geq 0$ and $n\geq 1$, $W^{m,n}(D)$ and $W_0^{m,n}(D)$ denote the usual Sobolev space provided the norm
and semi-norm $\|v\|_{W^{m,n}(D)}=\{\sum_{|\ell|\leq m}\|D^\ell
v\|^n_{L^n(D)}\}^{1/n}$, $|v|_{W^{m,n}(D)}=\{\sum_{|\ell|=
m}\|D^\ell v\|^n_{L^n(D)}\}^{1/n}$. If $n=2$ we usually write
$H^m(D)=W^{m,2}(D)$ and $H_0^m(D)=W_0^{m,2}(D)$,
$\|v\|_{H^m(D)}=\|v\|_{W^{m,2}(D)}$ and $|v|_{H^m(D)}=|v|_{W^{m,2}(D)}$.
In addition, for an integer $l\geq 0$ we define spaces of vector valued functions such as $H^l(0,T;H^m(\Omega))$ and $L^{\infty}(0,T;H^m(\Omega))$ with the norms
\begin{align*}
\|\psi\|_{H^l(0,T;H^m(\Omega))}=\sum_{i=0}^l\Big(\int_0^{T} \|d_t^i\psi(t)\|_{H^m(\Omega)}^2\;dt\Big)^{1/2},\quad
\|\psi\|_{L^{\infty}(0,T;H^m(\Omega))}=\max_{0\leq t\leq T}\|\psi(t)\|_{H^m(\Omega)}.
\end{align*}
In the sequel, we use $C$ to denote a positive constant independent of the meshsize and of the anisotropy ratio $ \varrho_B$ in \eqref{eq:aniso-ratio}, which may have different values at different occurrences. In addition, for any $\psi\in L^2(\Omega)^{2\times 2}$, $(A\psi,\psi)^{1/2}$ is a norm equivalent to $\|\psi\|_0$, which will be denoted by $\|A^{1/2}\psi\|_0$ throughout the paper.

\section{Staggered DG discretization}\label{sec:discrete}

In this section, we present the staggered DG discretization for the Biot system of poroelasticity. Specifically, a fully mixed formulation will be used, and the symmetry of stress is imposed weakly via the introduction of a lagrange multiplier.

To begin with, we introduce the construction of staggered meshes, which lays foundation for the construction of finite element spaces suitable for our setting. Let $\mathcal{T}_u$ be a shape-regular family of matching meshes covering $\Omega$ exactly, where hanging nodes are allowed. Note that the shape-regularity for general polygonal meshes can be described as follows: (1) Every element $M\in \mathcal{T}_u$ is star-shaped with respect to a ball of radius $\geq \rho_B h_E$, where $\rho_B$ is a positive constant and $h_E$ is the diameter of $M$; (2) For every element $M\in \mathcal{T}_u$ and every edge $e\subset \partial M$, it satisfies $h_e\geq \rho_e h_M$, where $\rho_e$ is a positive constant and $h_e$ represents the length of edge $e$. For an element $M\in \mathcal{T}_u$, we select an interior point $\nu$ and connect it to all the vertices of $M$. The resulting sub-triangulation is denoted as $\mathcal{T}_h$. We remark that $\nu$ is an arbitrary point interior to $M$ and for simplicity we choose it as center point. The union of all the edges in the primal partition $\mathcal{T}_u$ is denoted as $\mathcal{F}_u$. We use $\mathcal{F}_u^0$ to represent the subset of $\mathcal{F}_u$, that is the set of edges in $\mathcal{F}_u$ that does not lie on $\partial \Omega$. In addition, the edges generated during the subdivision process due to the connection of the interior point $\nu$ to all the vertices are called dual edges, which is denoted as $\mathcal{F}_p$. In addition, we define $\mathcal{F}=\mathcal{F}_u\cap \mathcal{F}_p$ and $\mathcal{F}=\mathcal{F}_u^0\cap \mathcal{F}_p$. For each triangle $\tau\in \mathcal{T}_h$, we let $h_\tau$ be the diameter of $\tau$ and $h=\max\{h_\tau,\tau\in \mathcal{T}_h\}$. Next, we introduce the dual mesh. For each interior edge $e\in \mathcal{F}_u^0$, we use $D(e)$ to represent the dual mesh, which is the union of the two triangles in $\mathcal{T}_h$ sharing the common edge $e$. For each edge $e\in \mathcal{F}_u\backslash \mathcal{F}_u^0$, we use $D(e)$ to denote the triangle in $\mathcal{T}_h$ having the edge $e$, see Figure~\ref{fig:mesh} for an illustration. For each edge $e$, we define a unit normal vector $n_e$ as follows: If $e$ is an interior edge, then $n_e$ is the unit normal vector of $e$ pointing towards the outside of $\Omega$. If $e$ is an interior edge, we then fix $n_e$ as one of the two possible unit normal vectors on $e$. When there is no ambiguity, we use $n$ instead of $n_e$ to simplify the notation. Let $k\geq 0$ be the order of approximation, for every $\tau\in \mathcal{T}_h$ and $e\in \mathcal{F}$, we define $P^k(\tau)$ and $P^k(e)$ as the spaces of polynomials of degree less than or equal to $k$ on $\tau$ and $e$, respectively.

\begin{figure}[t]
\centering
\includegraphics[width=0.65\textwidth]{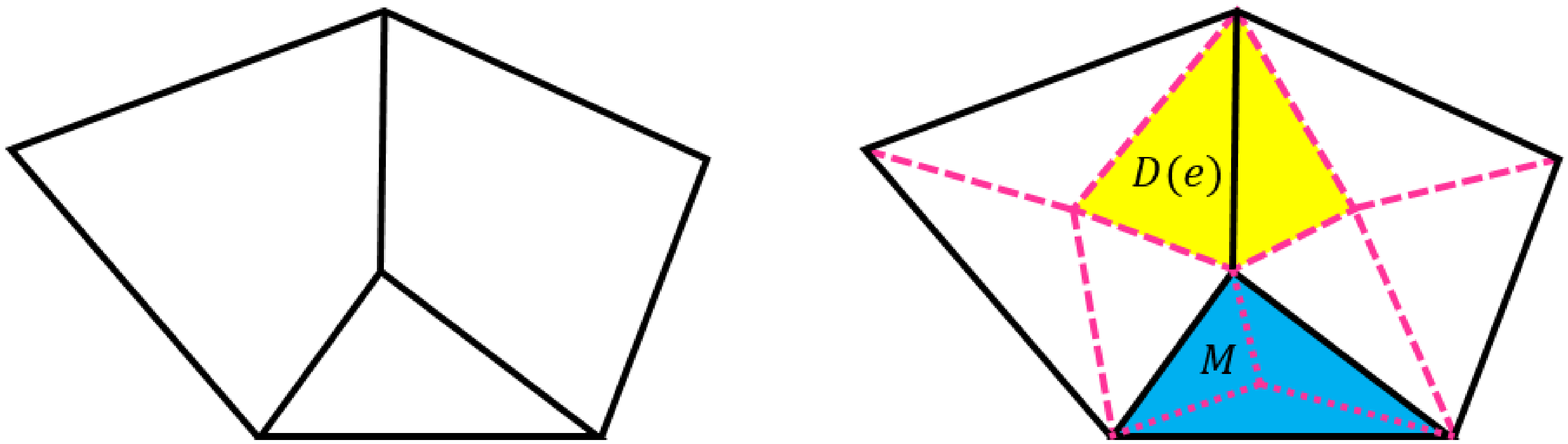}
\caption{Schematic of the primal mesh and the dual mesh. Solid lines represent edges in $\mathcal{F}_u$ and dashed lines represent edges in $\mathcal{F}_p$.}
\label{fig:mesh}
\end{figure}

For a scalar, vector, or tensor function $v$ that is double-valued on an interior edge $e\in \mathcal{F}^0$, its jump and average on $e$ are defined as
\begin{align*}
[v]=v\mid_{\tau_1}-v\mid_{\tau_2},\quad \{v\}=\frac{1}{2}(v\mid_{\tau_1}+v\mid_{\tau_2}),
\end{align*}
where $\tau_1$ and $\tau_2$ are the two triangles belonging to $\mathcal{T}_h$ which share the common edge $e$. We set $[v]=v\mid_{\tau_1}$ and $\{v\}=v\mid_{\tau_1}$ on boundary edges.

Now we are ready to introduce the finite element spaces that will be used for the construction of our method. First, the locally $H^{1}(\Omega)-$conforming
SDG space $S_h$ is defined by
\begin{equation*}
S_{h}:=\{w\in L^2(\Omega): w\mid_{\tau}\in P^{k}(\tau),[w]\mid_e=0 \;\forall e\in \mathcal{F}_u; w\mid_{\partial \Omega}=0\},
\end{equation*}
which is equipped by the following norm
\begin{align*}
\|w\|_Z^2=\sum_{\tau\in \mathcal{T}_h}\|\nabla w\|_{0,\tau}^2+\sum_{e\in \mathcal{F}_p}h_e^{-1}\|[w]\|_{0,e}^2.
\end{align*}
For a function $v=(v_1,v_2)\in [S_h]^2$ we define $\|v\|_h^2=\|v_{1}\|_Z^2+\|v_{2}\|_Z^2$.
%Note that any function $w$ belonging to $S_h$ is $H^1$-conforming within each dual mesh, i.e., $w\mid_{D(e)}\in H^1(D(e))$, $\forall e\in \mathcal{F}_u$.
%

Next, the locally $H(\mbox{div};\Omega)$-conforming
SDG space $\Sigma_h$ is defined by
\begin{equation*}
\Sigma_{h}:=\{\xi\in L^2(\Omega)^2: \xi\mid_{\tau} \in P^{k}(\tau)^2\;\forall \tau \in \mathcal{T}_{h},[\xi \cdot n]\mid_e=0\;\forall e\in \mathcal{F}_p\}.
\end{equation*}
%Any function $\xi$ belonging to $\Sigma_h$ is $H(\mbox{div})$-conforming within each polygonal partition, i.e., $\xi\mid_M\in H(\mbox{div};M), \forall M\in \mathcal{T}_u$.

Finally, the locally $H^{1}(\Omega)-$conforming finite element space is defined as
\begin{equation*}
M_{h}:=\{\eta\in L^2(\Omega): \eta\mid_{\tau}\in P^{k}(\tau)\;\forall \tau\in \mathcal{T}_h, [\eta]\mid_e=0\;\forall e\in \mathcal{F}_p\}.
\end{equation*}
%We remark that the locally $H^{1}(\Omega)-$conforming finite element space for the approximation of $\gamma$ is carefully designed, which can enforce weak symmetry of stress and guarantee the inf-sup condition.

We can derive the staggered DG discretization for the Biot system of poroelasticity by following \cite{LinaParkdiffusion18,LinaParkelasticity20}, which reads as follows: Find $(\sigma_h,u_h,\gamma_h,z_h,p_h)\in [\Sigma_h]^2\times [S_h]^2\times M_h\times \Sigma_h\times S_h$ such that
\begin{align}
(A(\sigma_h+\alpha p_hI),\psi)-B_h^*(u_h,\psi)+(\gamma_h,as(\psi))&=0\hspace{0.9cm} \forall \psi\in [\Sigma_h]^2,\label{eq:semi-discrete1}\\
B_h(\sigma_h,v)&=(f,v)\quad \forall v\in [S_h]^2,\label{eq:semi-discrete2}\\
(as(\sigma_h), \eta)&=0\hspace{1cm}\forall \eta\in M_h,\label{eq:semi-discrete3}\\
(K^{-1}z_h,\xi)+b_h^*(p_h,\xi)&=0\hspace{1.05cm}\forall \xi\in \Sigma_h,\label{eq:semi-discrete4}\\
c_0(\partial_tp_h,w)+\alpha(\partial_tA(\sigma_h+\alpha p_hI),wI)-b_h(z_h,w)&=(q,w)\quad \forall w\in S_h,\label{eq:semi-discrete5}
\end{align}
where
\begin{align*}
B_h(\psi,v) &= -\sum_{e\in \mathcal{F}_p}(\psi n,[v])_e+(\psi,\nabla v)\quad \forall (\psi,v)\in [\Sigma_h]^2\times [S_h]^2,\\
B_h^*(v,\psi)& = \sum_{e\in \mathcal{F}_u^0}(v,[\psi n])_e-(v,\nabla\cdot\psi)\quad \forall (\psi,v)\in [\Sigma_h]^2\times [S_h]^2
\end{align*}
and
\begin{align*}
b_h(\xi,w)&=-\sum_{e\in \mathcal{F}_p}(\xi\cdot n,[w])_e+(\xi,\nabla w)\quad \forall (\xi,w)\in \Sigma_h\times S_h,\\
b_h^*(w,\xi)&=\sum_{e\in \mathcal{F}_u^0} (w,[\xi\cdot n])_e-(w,\nabla\cdot \xi)\quad \forall (\xi,w)\in \Sigma_h\times S_h.
\end{align*}
The bilinear forms defined above satisfy the following adjoint properties
\begin{align*}
b_h(\xi,w) &= b_h^*(w,\xi)\quad \forall (w,\xi)\in S_h\times \Sigma_h,\\
B_h(\psi,v) & = B_h^*(v,\psi)\quad \forall (\psi,v)\in [\Sigma_h]^2\times [S_h]^2.
\end{align*}
In addition, the following inf-sup condition holds; cf. \cite{ChungWave09}.
\begin{align}
\|w\|_Z\leq C \sup_{\xi\in \Sigma_h}\frac{b_h^*(w,\xi)}{\|\xi\|_0}\quad \forall w\in S_h\label{eq:inf-sup-bh}
\end{align}
and
\begin{align}
\|v\|_h\leq C\sup_{\psi \in [\Sigma_h]^2} \frac{-B_h^*(v,\psi)}{\|\psi\|_0} \quad \forall v\in [S_h]^2.\label{eq:inf-sup-psi1}
\end{align}

\begin{remark}
{\rm (local conservation on the dual mesh). Taking $v=1$ in \eqref{eq:semi-discrete2} to be identically one over the dual mesh $D(e), \forall e\in \mathcal{F}_u$, we have
\begin{align*}
-(\sigma_h n_D,1)_{\partial D(e)}=(f,1)_{D(e)},
\end{align*}
where $n_D$ is the outward unit normal vector of $D(e)$. This property is the  crux for a posteriori estimates based on equilibrated fluxes; cf. \cite{LinaParkdiffusion18,ZhaoParkShin19}.

}
\end{remark}

Now we prove the inf-sup condition, which is the key to proving the stability and convergence estimates of the proposed method. The methodology exploited here is different from that of \cite{LeeKim16}. Indeed the norm used here bypasses the construction of divergence free functions in proving the convergence estimates.
%In addition, our norm defined below is different from that of \cite{LeeKim16}, which can later
\begin{lemma}
The following inf-sup condition holds
\begin{align}
\sup_{\psi_h\in [\Sigma_h]^2}\frac{-B_h^*(v_h,\psi_h)+(\eta_h,as(\psi_h))}{\|\psi_h\|_0}\geq C (\|v_h\|_h^2+\|\eta_h\|_0^2)^{\frac{1}{2}},\quad \forall (v_h,\eta_h)\in [S_h]^2\times M_h.\label{eq:inf-sup}
\end{align}

\end{lemma}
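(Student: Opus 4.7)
The plan is to exhibit a supremizing test function $\psi_h$ built as a weighted sum of two pieces, one controlling $\|v_h\|_h$ via the existing inf-sup \eqref{eq:inf-sup-psi1}, and one directly controlling $\|\eta_h\|_0$ via the skew-symmetric algebraic structure of $as$.

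\textbf{Construction of the $\eta_h$-piece.} Set $\psi_h^{(2)}:=\eta_h\chi$. Because $\chi$ is a constant skew matrix and $\eta_h\in M_h$ is single-valued on every dual edge $e\in\mathcal{F}_p$, each row of $\psi_h^{(2)}$ has continuous normal component across $\mathcal{F}_p$. Thus $\psi_h^{(2)}\in[\Sigma_h]^2$, and a direct computation gives $as(\psi_h^{(2)})=2\eta_h$, $\|\psi_h^{(2)}\|_0=\sqrt{2}\|\eta_h\|_0$, hence $(\eta_h,as(\psi_h^{(2)}))=2\|\eta_h\|_0^2$.

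\textbf{Construction of the $v_h$-piece and continuity.} By \eqref{eq:inf-sup-psi1}, choose (after rescaling) $\psi_h^{(1)}\in[\Sigma_h]^2$ with $\|\psi_h^{(1)}\|_0=\|v_h\|_h$ and $-B_h^*(v_h,\psi_h^{(1)})\geq C_1\|v_h\|_h^2$ for a fixed $C_1>0$ depending only on the shape regularity. In parallel, derive the continuity bound $|B_h^*(w_h,\zeta_h)|\leq C_B\|w_h\|_h\|\zeta_h\|_0$ on $[S_h]^2\times[\Sigma_h]^2$: using the adjoint identity $B_h^*(w_h,\zeta_h)=B_h(\zeta_h,w_h)$, one handles the volume term by Cauchy-Schwarz and the jump term by a discrete trace inequality $\|\zeta_h n\|_{0,e}^2\leq Ch_e^{-1}\|\zeta_h\|_{0,D(e)}^2$ on each $e\in\mathcal{F}_p$, then sums using finite overlap of $\{D(e)\}$.

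\textbf{Combination and absorption.} Test with $\psi_h:=\psi_h^{(1)}+r\psi_h^{(2)}$ for $r>0$. Then
\begin{align*}
-B_h^*(v_h,\psi_h)+(\eta_h,as(\psi_h))
&\geq C_1\|v_h\|_h^2+2r\|\eta_h\|_0^2\\
&\quad-|(\eta_h,as(\psi_h^{(1)}))|-r|B_h^*(v_h,\psi_h^{(2)})|.
\end{align*}
Bound $|(\eta_h,as(\psi_h^{(1)}))|\leq\sqrt{2}\|\eta_h\|_0\|v_h\|_h$ using $|as(\zeta)|\leq\sqrt{2}|\zeta|$ pointwise, and $|B_h^*(v_h,\psi_h^{(2)})|\leq\sqrt{2}C_B\|v_h\|_h\|\eta_h\|_0$. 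Young's inequality absorbs both cross terms into $\tfrac{C_1}{2}\|v_h\|_h^2$ and a constant times $\|\eta_h\|_0^2$. Coupled with $\|\psi_h\|_0\leq\|v_h\|_h+\sqrt{2}r\|\eta_h\|_0\lesssim(\|v_h\|_h^2+\|\eta_h\|_0^2)^{1/2}$, one reaches \eqref{eq:inf-sup}.

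The main obstacle is the last step: one must choose $r$ so that, after Young's inequality, the coefficients of both $\|v_h\|_h^2$ and $\|\eta_h\|_0^2$ remain strictly positive. The admissible range of $r$ is dictated by the ratio $C_1/C_B$ of the inf-sup constant in \eqref{eq:inf-sup-psi1} to the continuity constant of $B_h^*$. Verifying that this range is non-empty—i.e., that the weak-symmetry perturbation $r\psi_h^{(2)}$ can simultaneously restore control of $\eta_h$ without destroying the control of $v_h$ already provided by $\psi_h^{(1)}$—is the crux, and it is precisely the place where the staggered structure (adjointness $B_h^*=B_h$ and the explicit formula $as(\eta_h\chi)=2\eta_h$ at the discrete level) is exploited.
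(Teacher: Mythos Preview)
Your approach has a genuine gap at exactly the point you flag as the crux. Write $X=\|v_h\|_h$, $Y=\|\eta_h\|_0$. With your choice $\psi_h=\psi_h^{(1)}+r\psi_h^{(2)}$ the numerator is bounded below by $C_1X^2+2rY^2-\sqrt{2}(1+rC_B)XY$. For this quadratic form in $(X,Y)$ to be positive definite one needs $8rC_1>2(1+rC_B)^2$, i.e.\ $(1+rC_B)^2<4rC_1$. Viewed as a quadratic inequality in $r$, its discriminant is $16C_1(C_1-C_B)$, so a solution $r>0$ exists only when $C_1>C_B$. But $C_1\leq C_B$ is automatic: the inf-sup constant of any continuous bilinear form is bounded above by its continuity constant. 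Hence no single $r$ works. Letting $r$ depend on the ratio $t=X/Y$ does not help either: for $t\in(\sqrt{2}/C_B,\sqrt{2}/C_1)$ (non-empty whenever $C_1<C_B$) both the $r$-free part $C_1X^2-\sqrt{2}XY$ and the coefficient $2Y^2-\sqrt{2}C_BXY$ of $r$ are negative, so the numerator is negative for every $r\geq0$. The ``staggered structure'' you invoke (adjointness and $as(\eta_h\chi)=2\eta_h$) has already been used and offers no further leverage here.

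The paper's proof avoids this obstruction by a Fortin argument in which the asymmetry-correcting piece is built as a \emph{curl}. On each primal element $M$ one solves a local Stokes-type problem for $\psi_h^*\in H_0^1(M)^2$ (piecewise $P^{k+1}$, using Taylor--Hood stability) with $\nabla\cdot\psi_h^*$ matched to the residual asymmetry, and then sets $\psi_2|_M=-\mathrm{curl}\,\psi_h^*$. Because each row of $\psi_2$ is the rotated gradient of an $H_0^1(M)$ function, it is piecewise divergence-free and has vanishing normal trace on $\partial M\subset\mathcal{F}_u$; hence $B_h^*(v_h,\psi_2)=0$ for \emph{all} $v_h\in[S_h]^2$. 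This kills precisely the cross term $-rB_h^*(v_h,\psi_h^{(2)})$ that your argument cannot absorb. Your explicit choice $\psi_h^{(2)}=\eta_h\chi$ lacks this kernel property: $B_h(\eta_h\chi,v_h)=(\eta_h,\mathrm{rot}_h v_h)-\sum_{e\in\mathcal{F}_p}(\eta_h,[v_h\cdot\tau])_e$ is generically nonzero and enjoys no better continuity bound than $C_B$.
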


\begin{proof}

%\begin{align*}
%b_h^*(v,\tau)=\sum_{e\in \mathcal{F}_u^0} (v,[\tau\bm{n}])_e-(v,\nabla\cdot \tau)=-\sum_{e\in \mathcal{F}_p} ([v],\tau\bm{n})_e+(\nabla v, \tau)
%\end{align*}

%In each triangle, we define $\tau_1$ by
%\begin{align*}
%(\tau_1, \theta)_\tau&=(\nabla v, \theta)_\tau\quad \forall \theta\in P^{k-1}(\tau)^{2\times 2},\\
%(\tau_1n, w)_e&=\sum_{e\in \mathcal{F}_p\cap \partial \tau}\frac{1}{h_e}([v],[w])_e\quad \forall w\in P^{k-1}(e)^2
%\end{align*}
%By scaling arguments, we have
%\begin{align*}
%\|\tau_1\|_0\leq C\|v\|_h.
%\end{align*}
%
%Define $s$ to be the mean value of $\eta-as(\tau_1)$
%
%
%Let
%\begin{align*}
%\tau_2=\tau_1+\frac{s}{2}\chi
%\end{align*}
%
%\begin{align*}
%\|\tau_2\|_0\leq C (\|\tau_1\|_0+\|s\|_0)\leq C(\|v\|_h+\|\eta\|_0)
%\end{align*}
%
%
%\begin{align*}
%(as(\tau_2), w)=(as(\tau_1)+s, w)=(e,w)\quad \forall w\in P^k(\tau)
%\end{align*}
%
%Let $\tau=\tau_2-r$, where
%\begin{align*}r=
%\left(
%  \begin{array}{cc}
%    \frac{-sn^2}{2n^1}& 0 \\
%    0&  \frac{sn^1}{2n^2}\\
%  \end{array}
%\right)
%\end{align*}
%In addition, we have
%\begin{align*}
%\tau n=\tau_1n, \quad as(\psi)=as(\tau_2)
%\end{align*}
%
%
%\Red{new}

To prove the inf-sup condition \eqref{eq:inf-sup}, we apply Fortin interpolation operator (cf. \cite{Braess07}), namely, we need to build an interpolation operator $J_h: H^1(\Omega)^{2\times 2}\rightarrow [\Sigma_h]^2$ such that it satisfies for any $\psi\in H^1(\Omega)^{2\times 2}$
\begin{align*}
-B_h^*(v_h,\psi-J_h\psi)+(\eta_h, as(\psi-J_h\psi))&=0\quad \forall (v_h,\eta_h)\in [S_h]^2\times M_h,\\
\|J_h\psi\|_0&\leq C \|\psi\|_0.
\end{align*}
To this end we will first build $\psi_1\in [\Sigma_h]^2$ such that it satisfies
\begin{equation}
\begin{split}
B_h^*(v_h,\psi_1-\psi)&=0\quad \forall v_h\in[S_h]^2, \\
\|\psi_1\|_0&\leq C \|\psi\|_0,
\end{split}
\label{eq:inf-sup1}
\end{equation}
then we correct $\psi_1$ by using $\psi_2\in [\Sigma_h]^2$ which satisfies $B_h^*(v_h,\psi_2)=0,\;\forall v_h\in [S_h]^2$, in addition, there holds
\begin{align}
(as(\psi-\psi_2),\eta_h)&=(as(\psi_1),\eta_h)\quad \forall \eta_h \in M_h,\label{eq:fortin2}\\
\|\psi_2\|_0&\leq C\|\psi-\psi_1\|_0.\label{eq:fortin-ineq}
\end{align}
Since $B_h^*(\cdot,\cdot)$ satisfies the inf-sup condition \eqref{eq:inf-sup-psi1}, which means there exists
%\begin{align}
%\sup_{\psi_1 \in [\Sigma_h]^2} \frac{-B_h^*(v_h,\psi_1)}{\|\psi_1\|_0}\geq C \|v_h\|_h\quad \forall v_h\in [S_h]^2,\label{eq:inf-sup-psi1}
%\end{align}
%which can be done by following the previous works \cite{ChungWave09} and is thus omitted for simplicity.
%
%Note that \eqref{eq:inf-sup-psi1} is equivalent to be able to build
$\psi_1\in [\Sigma_h]^2$ that satisfies
\begin{equation*}
\begin{split}
B_h^*(v_h,\psi-\psi_1)&=0\quad \forall v_h\in [S_h]^2,\\
\|\psi_1\|_0&\leq  C\|\psi\|_0.
\end{split}
\end{equation*}
Next, we correct $\psi_1$ in the following way.
We define the following spaces for each primal element $M\in \mathcal{T}_u$:
\begin{align*}
V_h^k(M)&=\{\varsigma\mid_M\in H^1_0(M)^2:\varsigma\mid_\tau\in P^k(\tau)^2,\tau\subset M\},\\
Q_h^k(M)&=\{q\mid_M\in L^2(M):q\mid_\tau\in P^k(\tau),\tau\subset M, \int_Mq \;dx=0\}.
\end{align*}
For each primal element $M\in \mathcal{T}_u$, we consider the
stable approximation $V_h^{k+1}(M)\times Q_h^k(M)$ for the Stokes element such that
\begin{align*}
\mbox{curl} V_h^{k+1}(M)\subset \Sigma_h(M),
\end{align*}
where $\Sigma_h(M)$ denotes $\Sigma_h$ restricted to the primal element $M$. We then solve for $(\psi_h^*\mid_M, p_h^*\mid_M)\in V_h^{k+1}(M)\times Q_h^k(M)$ such that
\begin{align*}
( 2\mu \epsilon(\psi_h^*),\epsilon(\phi_h))_M+( p_h^*,\nabla\cdot\phi_h)_M&=(f,\phi_h)_M\quad \forall \phi_h\in V_h^{k+1}(M),\\
(q_h,\nabla \cdot \psi_h^*)_M &= (as(\psi-\psi_1),q_h)_M\quad \forall q_h\in Q_h^k(M).
\end{align*}
Since $\psi_h^*=0$ on $\partial M$ it is easy to check that the second equation also satisfies for a constant function.
Here we remark that the existing stable pair that fits into the above spaces can be Taylor-Hood element (cf. \cite{Brezzi91}).

Let $\psi_2\mid_M=-\mbox{curl}\psi_h^*\mid_M$, then we can check that
\begin{align*}
(as(\psi-\psi_2),\eta_h)_M&=(as(\psi_1),\eta_h)\quad \forall \eta_h\in M_h(M),\\
\|\psi_2\|_{0,M}&\leq C \|\psi-\psi_1\|_{0,M},
\end{align*}
which satisfies \eqref{eq:fortin2} by summing over all the primal elements. Here $M_h(M)$ denotes the space $M_h$ restricted to each primal element $M\in \mathcal{T}_u$. In addition, it also holds
\begin{align*}
B_h^*(v_h,\psi_2)=0\quad \forall v_h\in [S_h]^2.
\end{align*}
Therefore, we have built $\psi_1$ and $\psi_2$ which satisfy \eqref{eq:inf-sup1}-\eqref{eq:fortin-ineq}. Hence, by the well-known arguments from \cite{Braess07} we can infer that the existence of $\psi_1$ and $\psi_2$ leads to the inf-sup condition.

%\begin{align*}
%b_h^*(v,\psi)=\sum_{e\in \mathcal{F}_u^0\cap \partial M} (v,[\psi\bm{n}])_e-(v,\nabla\cdot \psi)_M=\sum_{e\in \mathcal{F}_u^0\cap \partial M} (v,[\psi_1\bm{n}])_e-(v,\nabla\cdot \psi_1)_M\geq \|v\|_h\|\psi\|_0
%\end{align*}
%
%\begin{align*}
%(as(\psi-\psi_2),\eta)_M=(as(\psi_1),\eta)_M
%\end{align*}

%fortin interpolation operator.
%
%
%
%
%
%
%
%To avoid the discussion on singular nodes, we adopt Taylor-Hood element
%\begin{align*}
%(as(\psi-\psi_2),\eta)=(as(\psi_1),\eta)
%\end{align*}

\end{proof}

\begin{remark}
{\rm
In the above proof, we employ Taylor-Hood stable pair, in fact we can also exploit Scott-Vogelius element. In which case, we need to be careful about the singular nodes, indeed if singular nodes exists then one needs to enforce additional restriction on those nodes (cf. \cite{Guzman18}), which is naturally satisfied for our choice of space $M_h$.

We also emphasize that we choose locally $H^1$-conforming space for $\gamma_h$ to enforce the symmetry of stress, which is different from that of \cite{LeeKim16}. In fact, we can also exploit fully discontinuous space for $\gamma_h$, which then leads to strongly symmetric stress. In this case, the final system will be singular for rectangular mesh if one chooses the center point as $\nu$. One can perturb the position of $\nu$ to avoid singularity. We want to develop a numerical scheme which is stable regardless of the position of $\nu$, thus we choose locally $H^1$-conforming space for $\gamma_h$.
}
\end{remark}

In order to prove the well-posedness of \eqref{eq:semi-discrete1}-\eqref{eq:semi-discrete5}, we will make use of the following theorem; cf. \cite{Showalter13}.
\begin{theorem}\label{thm:monotone}

Let the linear, symmetric and monotone operators $\mathcal{N}$ be given for the real vector space $E$ to its dual algebraic dual $E^*$, and let $E_b'$ be the Hilbert space which is the dual of $E$ with the seminorm
\begin{align*}
|x|_b=(\mathcal{N}x(x))^{\frac{1}{2}},\quad \forall x\in E.
\end{align*}
Let $\mathcal{M}\subset E\times E_b'$ be a relation with domain $D=\{x\in E:\mathcal{M}(x)\neq 0\}$. Assume $\mathcal{M}$ is monotone and $Rg(\mathcal{N}+\mathcal{M})=E_b'$. Then for each $x_0\in D$ and for each $\mathcal{F}\in W^{1,1}(0,T;E_b')$, there is a solution $x$ of
\begin{align}
\frac{d}{dt}(\mathcal{N}x(t))+\mathcal{M}(x(t))\ni \mathcal{F}(t), \quad \forall\; 0<t<T\label{eq:mono-form}
\end{align}
with
\begin{align*}
\mathcal{N}x\in W^{1,\infty}(0,T;E_b'),\;x(t)\in D,\; \forall \;0\leq t\leq T, \quad \mbox{and}\quad \mathcal{N}x(0)=\mathcal{N}x_0.
\end{align*}

\end{theorem}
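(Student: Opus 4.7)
The plan is to reduce the degenerate Cauchy problem on $E$ to a standard (non-degenerate) maximal monotone Cauchy problem on the Hilbert space $E_b'$, and then invoke the classical Brezis--K\=omura existence theorem. The key conceptual move is to use the substitution $y(t)=\mathcal{N}x(t)$, treat $y$ as the primary unknown in $E_b'$, and push the multivalued relation $\mathcal{M}$ forward through $\mathcal{N}$ to obtain an operator $\mathcal{A}$ on $E_b'$.

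First I would set up the ambient Hilbert structure. The symmetric monotone operator $\mathcal{N}$ induces on $E$ the seminorm $|x|_b=(\mathcal{N}x(x))^{1/2}$; quotienting by $\{x:|x|_b=0\}$ and completing yields a Hilbert space $E_b$, and $\mathcal{N}$ extends to an isometric isomorphism from $E_b$ onto its dual $E_b'$ (this is exactly how $E_b'$ is defined in the statement). Consequently every element of $E_b'$ has a unique preimage in $E_b$, and for $y\in E_b'$ with preimage $\widetilde{x}$ one has the pairing $\langle y,\widetilde{x}\rangle=|\widetilde{x}|_b^2=\|y\|_{E_b'}^2$.

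Next I would define $\mathcal{A}\subset E_b'\times E_b'$ as the pushforward of $\mathcal{M}$ through $\mathcal{N}$: $(y,g)\in \mathcal{A}$ iff there exists $x\in D$ with $\mathcal{N}x=y$ and $(x,g)\in \mathcal{M}$. The crucial verifications are (i) $\mathcal{A}$ is monotone on $E_b'$, which follows by unwinding the definitions: if $(y_i,g_i)\in\mathcal{A}$ with preimages $x_i$, then $\langle g_1-g_2, y_1-y_2\rangle_{E_b',E_b}=\langle g_1-g_2, x_1-x_2\rangle\geq 0$ by monotonicity of $\mathcal{M}$; and (ii) $\mathcal{A}$ is maximal monotone. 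For (ii) Minty's theorem reduces matters to showing $I+\mathcal{A}$ is surjective on $E_b'$, i.e.\ for every $F\in E_b'$ there is $y\in E_b'$ with $y+\mathcal{A}(y)\ni F$. Writing $y=\mathcal{N}x$, this is exactly the hypothesis $Rg(\mathcal{N}+\mathcal{M})=E_b'$. With $\mathcal{A}$ maximal monotone, the equation $\frac{d}{dt}(\mathcal{N}x)+\mathcal{M}(x)\ni\mathcal{F}$ rewrites as the standard Cauchy problem $y'(t)+\mathcal{A}(y(t))\ni\mathcal{F}(t)$ in $E_b'$ with initial datum $y(0)=\mathcal{N}x_0$.

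Finally I would invoke the Brezis regularity theorem for maximal monotone Cauchy problems in a Hilbert space: since $y(0)\in \overline{\mathrm{dom}\,\mathcal{A}}$ (in fact in $\mathrm{dom}\,\mathcal{A}$, as $x_0\in D$) and $\mathcal{F}\in W^{1,1}(0,T;E_b')$, there exists a unique $y$ with $y\in W^{1,\infty}(0,T;E_b')$ and $y(t)\in \mathrm{dom}\,\mathcal{A}$ for all $t\in[0,T]$, solving the inclusion a.e. Pulling back through $\mathcal{N}$ gives $x(t)\in D$ with $\mathcal{N}x=y\in W^{1,\infty}(0,T;E_b')$ and $\mathcal{N}x(0)=\mathcal{N}x_0$, which is the conclusion.

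The main obstacle is the non-injectivity of $\mathcal{N}$: one cannot literally write $\mathcal{M}\circ\mathcal{N}^{-1}$. Handling this requires the quotient/completion construction of $E_b$ and a careful check that the pushforward relation $\mathcal{A}$ is well-defined and genuinely maximal monotone on $E_b'$ (not merely on a dense subspace), so that Brezis's theorem applies verbatim; this is precisely the content of Showalter's framework which justifies the appeal to \cite{Showalter13}.
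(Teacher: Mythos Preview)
The paper does not prove this statement at all: Theorem~\ref{thm:monotone} is quoted verbatim as an external result from Showalter's monograph \cite{Showalter13} and is used only as a black box in the subsequent well-posedness proof of the discrete Biot system. There is therefore nothing in the paper to compare your argument against.

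That said, your outline is essentially the proof one finds in Showalter's book. The reduction $y=\mathcal{N}x$, the construction of the Hilbert space $E_b'$ via completion with respect to the $\mathcal{N}$-seminorm, the pushforward relation $\mathcal{A}=\mathcal{M}\circ\mathcal{N}^{-1}$ (understood as a relation to handle non-injectivity), the verification of maximal monotonicity of $\mathcal{A}$ via Minty from the range condition $Rg(\mathcal{N}+\mathcal{M})=E_b'$, and the final appeal to Brezis's regularity theorem for $W^{1,1}$ data---all of this is exactly the machinery Showalter develops. You have also correctly identified the one genuine subtlety, namely that $\mathcal{N}$ need not be injective on $E$, so the pushforward must be defined at the level of relations and one must check that monotonicity of $\mathcal{M}$ on $E$ transfers to monotonicity of $\mathcal{A}$ on $E_b'$; your computation of the duality pairing handles this correctly.
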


\begin{theorem}

There exists a unique solution to \eqref{eq:semi-discrete1}-\eqref{eq:semi-discrete5}.

\end{theorem}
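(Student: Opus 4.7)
The plan is to invoke Theorem~\ref{thm:monotone}. The first step is to cast \eqref{eq:semi-discrete1}--\eqref{eq:semi-discrete5} into the abstract form $\frac{d}{dt}(\mathcal{N}x(t))+\mathcal{M}x(t)\ni\mathcal{F}(t)$ on the finite-dimensional product space $E:=[\Sigma_h]^2\times[S_h]^2\times M_h\times\Sigma_h\times S_h$ with state $x=(\sigma_h,u_h,\gamma_h,z_h,p_h)$. I would identify $\mathcal{N}:E\to E^*$ as the symmetric positive semidefinite operator arising from the time-derivative terms in \eqref{eq:semi-discrete5}, with associated seminorm $|x|_b$ built from $c_0(\cdot,\cdot)$ and the compliance pairing applied to the pressure/stress coupling (symmetrized via the symmetry of the compliance tensor $A$); $\mathcal{M}$ would then collect all remaining bilinear terms from \eqref{eq:semi-discrete1}--\eqref{eq:semi-discrete5}, and $\mathcal{F}$ would package the data pairings $(f,v)+(q,w)$.

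The three hypotheses of Theorem~\ref{thm:monotone} would be verified in turn. Symmetry and monotonicity of $\mathcal{N}$ follow by construction together with $c_0\geq 0$ and positive definiteness of $A$. Monotonicity of $\mathcal{M}$ is the key structural feature: testing $\mathcal{M}x$ against $x$ and using the adjoint identities $B_h(\psi,v)=B_h^*(v,\psi)$ and $b_h(\xi,w)=b_h^*(w,\xi)$ together with the skew-symmetry of $as(\cdot)$ causes the off-diagonal couplings to cancel, leaving only the nonnegative diagonal contributions $(A\sigma_h,\sigma_h)\geq 0$ and $(K^{-1}z_h,z_h)\geq 0$. The range condition $\mathrm{Rg}(\mathcal{N}+\mathcal{M})=E_b'$ reduces, since $E$ is finite dimensional, to injectivity of $\mathcal{N}+\mathcal{M}$; I would prove this by setting the right-hand side to zero, testing with $x$, and using the cancellations above to force $\sigma_h=0$ and $z_h=0$ from the coercive diagonal blocks, then $p_h=0$ by combining \eqref{eq:semi-discrete4} with the inf-sup estimate \eqref{eq:inf-sup-bh} for $b_h^*$, and finally $u_h=0$ and $\gamma_h=0$ from the inf-sup condition \eqref{eq:inf-sup} of the preceding lemma.

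The hardest step is expected to be this range condition in the degenerate regime $c_0=0$, in which $\mathcal{N}$ has a nontrivial kernel and $\|p_h\|_0$ cannot be controlled directly from $|\cdot|_b$; one must then invoke \eqref{eq:semi-discrete4} and \eqref{eq:semi-discrete5} jointly so that the inf-sup \eqref{eq:inf-sup-bh} transfers control from $z_h$ back to $p_h$ via the Darcy relation. Once the three hypotheses are verified, Theorem~\ref{thm:monotone} produces a solution on $[0,T]$ starting from the compatible initial data discussed in Section~\ref{sec2}. Uniqueness follows from linearity: the difference of two solutions solves the homogeneous evolution problem with zero data and zero initial condition, and the same test-and-cancel argument applied pointwise in time forces the difference to vanish.
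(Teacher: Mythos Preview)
Your plan has a structural gap in the identification of $\mathcal{N}$ and $\mathcal{M}$. With the state $x=(\sigma_h,u_h,\gamma_h,z_h,p_h)$ and the undifferentiated system \eqref{eq:semi-discrete1}--\eqref{eq:semi-discrete5}, the only time-derivative terms sit in the fifth equation, namely $c_0(\partial_t p_h,w)+\alpha(\partial_t A(\sigma_h+\alpha p_hI),wI)$. This forces $\mathcal{N}$ to have a nonzero $(5,1)$-block (coming from $\alpha(\partial_t A\sigma_h,wI)$) but a zero $(1,5)$-block, so $\mathcal{N}$ is \emph{not} symmetric; the symmetry of $A$ cannot repair this because there is no time derivative in \eqref{eq:semi-discrete1} to pair with. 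Correspondingly, the leftover operator $\mathcal{M}$ is not monotone: testing against $x$ does \emph{not} leave only $(A\sigma_h,\sigma_h)$ and $(K^{-1}z_h,z_h)$; the cross term $\alpha(Ap_hI,\sigma_h)$ from \eqref{eq:semi-discrete1} survives uncancelled and is sign-indefinite. A second obstruction is the data condition $\mathcal{F}\in W^{1,1}(0,T;E_b')$ of Theorem~\ref{thm:monotone}: since $\mathcal{N}$ vanishes on the $u_h$-component, $E_b'$ carries no $u_h$-component either, so the pairing $(f,v)$ cannot be placed in $\mathcal{F}$ when $f\neq 0$.

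The paper resolves both issues by first differentiating \eqref{eq:semi-discrete1} in time and replacing $(u_h,\gamma_h)$ by the new unknowns $(\dot{u}_h,\dot{\gamma}_h)$. This puts a time derivative also in the first equation, so that the differentiated \eqref{eq:semi-discrete1} and \eqref{eq:semi-discrete5} together produce a genuinely symmetric $\mathcal{N}$ with seminorm $|x|_b^2=\|A^{1/2}(\sigma_h+\alpha p_hI)\|_0^2+c_0\|p_h\|_0^2$, and the remaining $\mathcal{M}$ then has nonnegative symmetric part. The obstruction from $f\neq 0$ is handled by subtracting the solution of an auxiliary static elasticity problem with source $f$, reducing to modified data that lies in $E_b'$. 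After invoking Theorem~\ref{thm:monotone}, the original $u_h$ and $\gamma_h$ are recovered by time-integrating $\dot{u}_h,\dot{\gamma}_h$ from compatible initial data, and uniqueness is read off from the stability bound in Theorem~\ref{thm:stability}. Your verification of the range condition via the inf-sup estimates \eqref{eq:inf-sup-bh} and \eqref{eq:inf-sup} is correct and matches the paper; what is missing is the preliminary differentiation step and the reduction for $f$.
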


\begin{proof}

In order to fit \eqref{eq:semi-discrete1}-\eqref{eq:semi-discrete5} in the form of Theorem~\ref{thm:monotone}, we consider a slightly modified formulation, with \eqref{eq:semi-discrete1} differentiated in time and the new variables $\dot{u}_h$ and $\dot{\gamma_h}$ representing $d_tu_h$ and $d_t\gamma_h$, respectively:
\begin{align}
(\partial_tA(\sigma_h+\alpha p_hI),\psi)-B_h^*(\dot{u}_h,\psi)+(\dot{\gamma}_h,as(\psi))&=0\quad \forall \psi\in [\Sigma_h]^2.\label{eq:difft}
\end{align}
We now introduce the operators
\begin{align*}
&(A_{\sigma\sigma}\sigma_h,\psi) = (A\sigma_h,\psi),\quad (A_{\sigma p}\sigma_h,w)=(A\sigma_h,\alpha wI),\quad (A_{\sigma u} \sigma_h,v)=B_h(\sigma_h,v),\\
&(A_{\sigma \gamma}\sigma_h,\eta) = (as(\sigma_h), \eta),\quad (A_{zz}z_h,\xi) = (K^{-1}z_h,\xi),\quad (A_{zp}z_h,w) = b_h(z_h,w),\\
& (A_{pp} p_h,w) = c_0(p_h,w)+\alpha(A\alpha p_hI,wI).
\end{align*}
We have a system in the form of \eqref{eq:mono-form}, where
\begin{align*}
\dot{x}=\left(
          \begin{array}{c}
            \sigma_h \\
            \dot{u}_h \\
            \dot{\gamma}_h \\
            z_h \\
            p_h \\
          \end{array}
        \right), \;
\mathcal{N}= \left(
               \begin{array}{ccccc}
                 A_{\sigma \sigma} & 0 & 0 & 0 & A_{\sigma p}^T \\
                 0 & 0 & 0 & 0 & 0 \\
                 0 & 0 & 0 & 0 & 0 \\
                 0 & 0 & 0 & 0 & 0 \\
                 A_{\sigma p} & 0 & 0 & 0 & A_{pp} \\
               \end{array}
             \right),\;
\mathcal{M}=\left(
              \begin{array}{ccccc}
                0 & -A_{\sigma u}^T & A_{\sigma \gamma}^T & 0 & 0 \\
                A_{\sigma u} &0 & 0 & 0 & 0 \\
                A_{\sigma \gamma} &0 & 0 & 0 & 0 \\
                0 & 0 & 0 & A_{zz} & A_{zp}^T \\
                0 & 0 & 0 & -A_{zp} & 0 \\
              \end{array}
            \right)
\end{align*}
and $ \mathcal{F}=(0,f,0,0,q)^T$.

The dual space $E_b'$ is $L^2(\Omega)^{2\times 2}\times 0\times 0\times 0\times L^2(\Omega)$, and the condition $\mathcal{F}\in W^{1,1}(0,T;E_b')$ in Theorem~\ref{thm:monotone} allows for non-zero source terms only in the equations with time derivatives, which means $f=0$.
We can reduce our problem to a system with $f=0$ by solving for each $t\in (0,T]$ an elasticity problem with a source term $f$, cf. \cite{Showalter10}.
\begin{align*}
\begin{alignedat}{2}
(A\sigma_h^f,\psi)-B_h^*(\dot{u}_h^f,\psi)+(\dot{\gamma}^f_h,as(\psi))&=0\quad && \forall \psi\in [\Sigma_h]^2,\\
B_h(\sigma_h^f,v)&=(f,v)\quad &&\forall v\in [S_h]^2,\\
(as(\sigma_h^f), \eta)&=0\quad && \forall \eta\in M_h.
\end{alignedat}
\end{align*}
Subtracting the above equations from \eqref{eq:semi-discrete1}-\eqref{eq:semi-discrete5}, we can obtain
\begin{align}
(A((\sigma_h-\dot{\sigma}_h^f)+\alpha p_hI),\psi)-B_h^*(u_h-\dot{u}_h^f,\psi)+(\gamma_h-\dot{\gamma}_h^f,as(\psi))&
=(A(\sigma_h^f-\partial_t\sigma_h^f),\psi),\\
B_h(\sigma_h-\sigma_h^f,v)&=0,\\
(as(\sigma_h-\sigma_h^f), \eta)&=0,\\
(K^{-1}z_h,\xi)+b_h^*(p_h,\xi)&=0,\\
c_0(\partial_tp_h,w)+\alpha(\partial_tA((\sigma_h-\sigma_h^f)+\alpha p_hI),wI)-b_h(z_h,w)&=(q,w)-(\alpha \partial_tA\sigma_h^f,wI),
\end{align}
which results in a problem with a modified right hand side $\mathcal{F}=(A_{\sigma \sigma} (\sigma_h^f-\partial_t\sigma_h^f),0,0,0,q-A_{\sigma p} \partial_t \sigma_h^f)^T$.

The range condition $Rg(\mathcal{N}+\mathcal{M})=E_b'$ can be verified by showing that the square finite dimensional homogeneous system: Find $(\hat{\sigma}_h,\hat{u}_h,\hat{\gamma}_h,\hat{z}_h,\hat{p}_h)\in [\Sigma_h]^2\times [S_h]^2\times M_h\times \Sigma_h\times S_h$ such that
\begin{align}
(A(\hat{\sigma}_h+\alpha \hat{p}_hI),\psi)-B_h^*(\hat{u}_h,\psi)+(\hat{\gamma}_h,as(\psi))&=0\quad \forall \psi\in [\Sigma_h]^2,\label{eq:homo1}\\
B_h(\hat{\sigma}_h,v)&=0\quad \forall v\in [S_h]^2,\\
(as(\hat{\sigma}_h), \eta)&=0\quad \forall \eta\in M_h,\\
(K^{-1}\hat{z}_h,\xi)+b_h^*(\hat{p}_h,\xi)&=0\quad \forall \xi\in \Sigma_h,\\
c_0(\partial_t\hat{p}_h,w)+\alpha(\partial_tA(\hat{\sigma}_h+\alpha \hat{p}_hI),wI)-b_h(\hat{z}_h,w)&=0\quad \forall w\in S_h,\label{eq:homo5}
\end{align}
has only zero solution. Taking $(\psi,v,\eta,\xi,w)=(\hat{\sigma}_h,\hat{u}_h,\hat{\gamma}_h,\hat{z}_h,\hat{p}_h)$ in \eqref{eq:homo1}-\eqref{eq:homo5} and summing up the resulting equations yield
\begin{align*}
\|A^{\frac{1}{2}}(\hat{\sigma}_h+\alpha \hat{p}_hI)\|_0^2+c_0\|\hat{p}_h\|_0^2+\|K^{-\frac{1}{2}}\hat{z}_h\|_0^2=0,
\end{align*}
which gives $\hat{\sigma}_h+\alpha \hat{p}_hI=0$ and $\hat{z}_h=0$. By the inf-sup condition \eqref{eq:inf-sup-bh}, we have
\begin{align*}
\|\hat{p}_h\|_Z\leq C \|\hat{z}_h\|_0,
\end{align*}
thereby $\hat{p}_h=0$ by using the discrete Poincar\'{e}-Friedrichs inequality (cf. \cite{Brenner03}), consequently, $\hat{\sigma}_h=0$. The inf-sup condition \eqref{eq:inf-sup} and \eqref{eq:homo1} lead to
\begin{align*}
\|\hat{u}_h\|_h+\|\hat{\gamma}_h\|_0\leq C \|A^{\frac{1}{2}}(\hat{\sigma}_h+\alpha \hat{p}_hI)\|_0,
\end{align*}
which yields $\hat{u}_h=0$ and $\hat{\gamma}_h=0$.

Finally, we need compatible initial data $\dot{x}_0\in D$, i.e., $\mathcal{M}\dot{x}_0\in E_b'$. Let us first consider the initial data $x^0=(\sigma_h^0,u_h^0,\gamma_h^0,z_h^0,p_h^0)$ for the discrete formulation \eqref{eq:semi-discrete1}-\eqref{eq:semi-discrete5}. We take $x^0$ to be the elliptic projection of the initial data $\tilde{x}_0=(\sigma^0,u^0,\gamma^0,z^0,p^0)$ for the weak formulation \eqref{eq:weak1}-\eqref{eq:weak5}, which is constructed from $p^0$ by the procedure described at the end of section~\ref{sec2}. With the reduction to a problem with $f=0$, the construction satisfies $(\mathcal{N}+\mathcal{M})\tilde{x}^0\in E_b'$. Then by
\begin{align*}
(\mathcal{N}+\mathcal{M})x^0=(\mathcal{N}+\mathcal{M})\tilde{x}^0,
\end{align*}
we have $\mathcal{M}x^0=(\mathcal{N}+\mathcal{M})\tilde{x}^0-\mathcal{N}x^0\in E_b'$. For the initial data of the differentiated problem \eqref{eq:difft}, \eqref{eq:semi-discrete2}-\eqref{eq:semi-discrete5}, we simply take $\dot{x}^0=(\sigma_h^0,0,0,z_h^0,p_h^0)$, which also satisfies $\mathcal{M}\dot{x}^0\in E_b'$. Here $u_h^0$ and $\gamma_h^0$ are not needed for the differentiated problem, but will be used to recover the solution of the original problem.

Now all conditions of Theorem~\ref{thm:monotone} are satisfied and we conclude the existence of a solution of \eqref{eq:difft}, \eqref{eq:semi-discrete2}-\eqref{eq:semi-discrete5} with $\sigma_h(0)=\sigma_h^0$ and $p_h(0)=p_h^0$. By taking $t\rightarrow 0$ in \eqref{eq:semi-discrete4} and using that $z_h^0$ and $p_h^0$ satisfy \eqref{eq:semi-discrete4} at $t=0$, we also infer that $z_h(0)=z_h^0$.

Next, we recover the solution of the original problem. Let us define
\begin{align*}
u_h(t)=u_h^0+\int_0^t \dot{u}_h\;ds,\quad \gamma_h(t)=\gamma_h^0+\int_0^t \dot{\gamma}_h\;ds\quad \forall t\in (0,T].
\end{align*}
By construction, $u_h(0)=u_h^0$ and $\gamma_h(0)=\gamma_h^0$. Integrating \eqref{eq:difft} from $0$ to any $t\in (0,T]$ and using that $\sigma_h^0,u_h^0,\gamma_h^0$ satisfy \eqref{eq:semi-discrete1} at $t=0$, we conclude that \eqref{eq:semi-discrete1} holds for all $t$. This completes the existence proof. Uniqueness follows from the stability bound given in Theorem~\ref{thm:stability} in the next section.

\end{proof}

%\begin{remark}(comparison to existing methods).
%
%We compare our method with multipoint stress-multipoint flux mixed finite element method for the Biot system of poroelasticity \cite{Ambartsumyan20}.
%
%\end{remark}

Before closing this section, we introduce some interpolation operators which will play an important role for later analysis.
 There exists the interpolation operator $I_h: H^1(\Omega)\rightarrow S_h$ satisfying for any $\omega\in H^1(\Omega)$
\begin{equation}
\begin{split}
(I_h \omega-\omega,\psi)_e&=0 \quad \forall \psi\in P^k(e), e\in \mathcal{F}_{u},\\
(I_h\omega-\omega,\psi)_\tau&=0 \quad \forall \psi\in P^{k-1}(\tau), \tau\in \mathcal{T}_h.
\end{split}
\label{def:Ih}
\end{equation}
Similarly, there also exists an operator $\Pi_h : [H^\delta(\Omega)]^2\rightarrow \Sigma_h,\delta>1/2$ such that for any $\varphi\in [H^\delta(\Omega)]^2$
\begin{equation}
\begin{split}
((\Pi_h \varphi-\varphi)\cdot n,\phi)_e&=0 \quad \forall \phi\in P^k(e), e\in \mathcal{F}_{p},\\
(\Pi_h\varphi-\varphi,\phi)_\tau&=0 \quad \forall \phi\in P^{k-1}(\tau)^2, \tau\in \mathcal{T}_h.
\end{split}
\label{def:Jh}
\end{equation}
The definition of the interpolation operators implies that
\begin{align}
b_h(z-\Pi_hz,w)&=0 \quad \forall w\in S_h,\label{eq:Pih}\\
b_h^*(p-I_hp,v)&=0 \quad \forall v\in \Sigma_h.\label{eq:Ih}
\end{align}
%
%We remark that for $\psi\in [V_h]^2$, we denote $\Pi_h \psi=(\Pi_h\psi_1,\Pi_h\psi_2)\in $
In addition for $\Pi_h\sigma=(\Pi_h\sigma_1,\Pi_h\sigma_2)\in [\Sigma_h]^2$ and $I_hu=(I_hu_1,I_hu_2)\in [S_h]^2$, we also have
\begin{align}
B_h(\sigma-\Pi_h\sigma,v)&=0\quad \forall v\in [S_h]^2,\label{eq:Pih2}\\
B_h^*(u-I_hu, \psi)&=0\quad \forall \psi\in [\Sigma_h]^2.\label{eq:Ih2}
\end{align}
It is easy to check that $I_h$ and $\Pi_h$ are polynomial preserving operators, which satisfy the following interpolation error estimates (cf. \cite{Ciarlet02,ChungWave09})
\begin{equation}
\begin{split}
\|\varphi-\Pi_h\varphi\|_0&\leq C h^{k+1}\|\varphi\|_{H^{k+1}(\Omega)}\quad \forall \varphi\in H^{k+1}(\Omega)^2,\\
\|\omega-I_h\omega\|_0&\leq C h^{k+1}\|\omega\|_{H^{k+1}(\Omega)}\quad \forall \omega\in H^{k+1}(\Omega).
\end{split}
\label{eq:interpolation1}
\end{equation}
In addition, we take $\pi_h$ to be the standard nodal interpolation operator, which satisfies
\begin{align}
\|\pi_h \zeta-\zeta\|_0\leq C h^{k+1}\|\zeta\|_{H^{k+1}(\Omega)}\quad \forall \zeta\in H^{k+1}(\Omega).\label{eq:interpolation2}
\end{align}

\section{Error analysis}\label{sec:error}

In this section, we prove the convergence estimates for the semi-discrete scheme and fully discrete scheme with backward Euler in time. In particular we will show that the stability and convergence error estimates are independent of the anisotropy ratio $ \varrho_B$ and of the storativity coefficient $c_0$, and are valid for $c_0=0$.

\subsection{Error analysis for semi-discrete scheme}

In this subsection we prove the stability and convergence estimates for the semi-discrete scheme. To begin, we consider the following stability estimate.
\begin{theorem}\label{thm:stability}
(stability).
Let $(\sigma_h,u_h,\gamma_h,z_h,p_h)$ be the numerical solution of \eqref{eq:semi-discrete1}-\eqref{eq:semi-discrete5}, then we have
\begin{equation*}
\begin{split}
&\|\sigma_h\|_{L^\infty(0,T;L^2(\Omega))}^2+\|p_h\|_{L^\infty(0,T;L^2(\Omega))}^2
+\|u_h\|_{L^\infty(0,T;L^2(\Omega))}^2+\|K^{-\frac{1}{2}}z_h\|_{L^\infty(0,T;L^2(\Omega))}^2\\
&\;+\|\gamma_h\|_{L^\infty(0,T;L^2(\Omega))}^2+ \|K^{-\frac{1}{2}}z_h\|_{L^2(0,T;L^2(\Omega))}^2+\|u_h\|_{L^2(0,T;L^2(\Omega))}^2
+\|p_h\|_{L^2(0,T;L^2(\Omega))}^2\\
&\;+\|\sigma_h\|_{L^2(0,T;L^2(\Omega))}^2+\|\gamma_h\|_{L^2(0,T;L^2(\Omega))}^2\leq
C\Big(\|f\|_{L^\infty(0,T;L^2(\Omega))}^2\\
&\;+\|f\|_{H^1(0,T;L^2(\Omega))}^2+\|q\|_{H^1(0,T;L^2(\Omega))}^2
+\|q\|_{L^\infty(0,T;L^2(\Omega))}^2
+\|p^0\|_{H^1(\Omega)}^2\Big).
\end{split}
\end{equation*}

\end{theorem}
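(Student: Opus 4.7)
The strategy is a two-stage energy/Gronwall estimate adapted to the weak-symmetry five-field setting. The essential difficulty is that the mechanics block \eqref{eq:semi-discrete2} has no time derivative, so $u_h$ admits no direct time regularity and cannot be tested against itself. As already done in the existence proof, I differentiate the constitutive equation \eqref{eq:semi-discrete1} in time to obtain \eqref{eq:difft}, then test the modified system with $(\psi,v,\eta,\xi,w)=(\sigma_h,\dot u_h,\dot\gamma_h,z_h,p_h)$. The adjoint identities $B_h=B_h^*$ and $b_h=b_h^*$ cancel the coupling terms, and testing \eqref{eq:semi-discrete3} with $\dot\gamma_h$ eliminates $(\dot\gamma_h,as(\sigma_h))$; what remains is the first energy identity
\begin{equation*}
\frac{1}{2}\frac{d}{dt}\bigl(\|A^{1/2}(\sigma_h+\alpha p_hI)\|_0^2+c_0\|p_h\|_0^2\bigr)+\|K^{-1/2}z_h\|_0^2=(f,\dot u_h)+(q,p_h).
\end{equation*}

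After integrating on $[0,t]$ the sensitive term $\int_0^t(f,\dot u_h)\,ds$ is moved off $\dot u_h$ by integration by parts in time,
\begin{equation*}
\int_0^t(f,\dot u_h)\,ds=(f(t),u_h(t))-(f(0),u_h(0))-\int_0^t(\partial_sf,u_h)\,ds,
\end{equation*}
so only a pointwise $L^2$ bound on $u_h$ is needed. Equation \eqref{eq:semi-discrete1} gives $-B_h^*(u_h,\psi)+(\gamma_h,as(\psi))=-(A(\sigma_h+\alpha p_hI),\psi)$, and inserting this into the inf-sup Lemma \eqref{eq:inf-sup} yields
\begin{equation*}
\|u_h\|_h+\|\gamma_h\|_0\leq C\|A^{1/2}(\sigma_h+\alpha p_hI)\|_0,
\end{equation*}
and the discrete Poincaré--Friedrichs inequality converts $\|u_h\|_h$ to $\|u_h\|_0$. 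Young's inequality then absorbs the resulting contribution into the LHS and Gronwall closes the estimate, producing $L^\infty(0,T;L^2)$ control of $A^{1/2}(\sigma_h+\alpha p_hI)$ and $L^2(0,T;L^2)$ control of $K^{-1/2}z_h$ in terms of the data norms, uniformly in $c_0$ (the $c_0$-term sits on the LHS with a favorable sign) and in the anisotropy ratio $\varrho_B$ (only $\|K^{-1/2}z_h\|_0$ appears, which is exactly what is controlled). The remaining pointwise $L^2$ bounds are extracted from the Darcy equation by the inf-sup \eqref{eq:inf-sup-bh}, which gives $\|p_h\|_Z\leq C\sup_{\xi}(-(K^{-1}z_h,\xi)/\|\xi\|_0)$ and hence $\|p_h\|_0$ by Poincaré; then $\sigma_h=(\sigma_h+\alpha p_hI)-\alpha p_hI$ delivers $\|\sigma_h\|_0$, and the displayed inf-sup above recovers $\|u_h\|_0$ and $\|\gamma_h\|_0$. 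Integrating these pointwise bounds in time produces their $L^2(0,T;L^2)$ counterparts.

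The $L^\infty(0,T;L^2)$ bound on $K^{-1/2}z_h$ listed in the statement requires a second energy identity obtained by differentiating \eqref{eq:semi-discrete4} in time and combining with the time-derivatives of \eqref{eq:semi-discrete1}--\eqref{eq:semi-discrete3} tested by $(\partial_t\sigma_h,\dot u_h,\dot\gamma_h)$ and \eqref{eq:semi-discrete5} tested by $\partial_tp_h$; the cross terms again cancel by the adjoint identities and the survivors form an identity of the shape $\tfrac12\tfrac{d}{dt}\|K^{-1/2}z_h\|_0^2+\|A^{1/2}\partial_t(\sigma_h+\alpha p_hI)\|_0^2+c_0\|\partial_tp_h\|_0^2=(\partial_tf,\dot u_h)+(q,\partial_tp_h)$, which is closed by integrating by parts in time on $(q,\partial_tp_h)$ (accounting for the $q\in H^1(0,T;L^2)$ hypothesis) and reusing the IBP/inf-sup trick on $(\partial_tf,\dot u_h)$. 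Initial values are handled via the fact that $(\sigma_h^0,u_h^0,\gamma_h^0,z_h^0,p_h^0)$ is the elliptic projection of the weak-formulation initial data built from $p^0$, so a separate elliptic stability estimate on the elasticity-Darcy pair delivers $\|p^0\|_{H^1(\Omega)}^2$ on the right-hand side. The main obstacle throughout is exactly the absence of native time regularity for $u_h$: with no time derivative on the mechanics block one cannot test \eqref{eq:semi-discrete2} with $u_h$ itself, and the integrate-by-parts-then-inf-sup device is the essential technique. It is precisely this device that forces $f\in H^1(0,T;L^2)$ (and, for the $L^\infty$ bound on $z_h$, $q\in H^1(0,T;L^2)$) into the data norm.
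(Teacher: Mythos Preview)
Your proposal is correct and follows the paper's approach essentially verbatim: the same two energy identities (test with $(\sigma_h,\partial_t u_h,\partial_t\gamma_h,z_h,p_h)$ after time-differentiating \eqref{eq:semi-discrete1}, then with $(\partial_t\sigma_h,\partial_t u_h,\partial_t\gamma_h,z_h,\partial_t p_h)$ after time-differentiating \eqref{eq:semi-discrete1}--\eqref{eq:semi-discrete4}), the same integration by parts in time on the forcing terms, the same inf-sup-plus-Poincar\'e recovery of $u_h,\gamma_h,p_h$, and the same treatment of the initial data via the elliptic projection. The only cosmetic differences are that the paper closes the first estimate by direct absorption into $\int_0^t\|K^{-1/2}z_h\|_0^2$ rather than invoking Gronwall, and that in the second identity only the inf-sup bound $\|\partial_t u_h\|_h\le C\|\partial_t A^{1/2}(\sigma_h+\alpha p_hI)\|_0$ (not an additional integration by parts in time) is used to handle $(\partial_t f,\partial_t u_h)$.
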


\begin{proof}
Differentiating \eqref{eq:semi-discrete1} in time, taking $\psi=\sigma_h,v=\partial_tu_h,\eta=\partial_t\gamma_h, \xi=z_h,w=p_h$ in \eqref{eq:semi-discrete1}-\eqref{eq:semi-discrete5} and summing up the resulting equations yield
%\begin{align*}
%(\partial_t A(\sigma_h+\alpha p_hI),\sigma_h)-b_h^*(\partial_t u_h,\tau)+(\partial_t\gamma_h,as(\tau))=0\quad \forall \tau\in [\Sigma_h]^2.
%\end{align*}
%%
%Taking $\tau=\sigma_h,\bm{v}=\partial_t\bm{u}_h,\eta=\partial_t\gamma_h, \xi=z_h,w=p_h$ in \eqref{eq:semi-discrete1}-\eqref{eq:semi-discrete5}, we can obtain
%\begin{equation}
%\begin{split}
%\alpha(\partial_tA(\sigma_h+\alpha p_hI),\sigma_h)-\alpha b_h^*(\partial_tu_h,\sigma_h)+\alpha(\partial_t\gamma_h,as(\sigma_h))=0,\\
%\alpha b_h(\sigma_h,\partial_t\bm{u}_h)=\alpha(f,\partial_t u_h),\\
%\alpha(as(\sigma_h), \partial_t\gamma_h)=0,\\
%(K^{-1}z_h,z_h)+b_h^*(p_h,z_h)=0,\\
%c_0(\partial_tp_h,p_h)+\alpha(\partial_tA(\sigma_h+\alpha p_hI),p_hI)-b_h(z_h,p_h)=(q,p_h)
%\end{split}
%\end{equation}
%\begin{align*}
%\alpha(\partial_tA(\sigma_h+\alpha p_hI),\sigma_h+p_hI)+(K^{-1}z_h,z_h)+c_0(\partial_tp_h,p_h)=\alpha(f,\partial_t u_h)+(q,p_h)
%\end{align*}
%
\begin{align*}
\frac{1}{2}\frac{d}{dt}\Big(\|A^{\frac{1}{2}}(\sigma_h+\alpha p_hI)\|_0^2+c_0\|p_h\|_0^2\Big)+\|K^{-\frac{1}{2}}z_h\|_0^2=(f,\partial_t u_h)+(q,p_h).
\end{align*}
Integrating over time implies
\begin{align*}
&\frac{1}{2}\Big(\|A^{\frac{1}{2}}(\sigma_h+\alpha p_hI)(t)\|_0^2+c_0\|p_h(t)\|_0^2\Big)+\int_0^t\|K^{-\frac{1}{2}}z_h\|_0^2\;ds=-\int_0^t (\partial_tf,u_h)\;ds\\
&\;+\int_0^t (q,p_h)\;ds
+\frac{1}{2}\Big(\|A^{\frac{1}{2}}(\sigma_h+\alpha p_hI)(0)\|_0^2+c_0\|p_h^0\|_0^2\Big)+(f,u_h)(t)-(f,u_h)(0).
\end{align*}
The Cauchy-Schwarz inequality and Young's inequality lead to
\begin{equation}
\begin{split}
&\|A^{\frac{1}{2}}(\sigma_h+\alpha p_hI)(t)\|_0^2+c_0\|p_h(t)\|_0^2+2\int_0^t\|K^{-\frac{1}{2}}z_h\|_0^2\;ds\leq \epsilon_1\Big(\|u_h(t)\|_0^2\\
&\;+\int_0^t (\|p_h\|_0^2+\|u_h\|_0^2)\;ds\Big)
+\frac{1}{2\epsilon_1}\Big(\|f(t)\|_0^2+\int_0^t(\|q\|_0^2+\|\partial_t f\|_0^2)\;ds\Big)\\
&\;+\|A^{\frac{1}{2}}(\sigma_h+\alpha p_hI)(0)\|_0^2+c_0\|p_h^0\|_0^2+\|u_h^0\|_0^2+\|f(0)\|_0^2.
\end{split}
\label{eq:estimate1}
\end{equation}
By \eqref{eq:semi-discrete1} and the inf-sup condition \eqref{eq:inf-sup}, we have
\begin{align*}
\|u_h\|_h+\|\gamma_h\|_0\leq C \sup_{\psi\in [\Sigma_h]^2}\frac{-B_h^*(u_h,\psi)+(as(\psi),\gamma_h)}{\|\psi\|_0}\leq C \|A^{\frac{1}{2}}(\sigma_h+\alpha p_hI)\|_0.
\end{align*}
%
%There is $\tau_0\in H(div)$ such that
%\begin{align*}
%\tau_0\mid_T\in P_1(T),\\
%(\tau_0,\eta)=(P_0(\gamma_h\chi), \eta)=(P_0(\gamma_h),as(\eta))\quad \forall \eta skew-symmetric,\\
%\nabla\cdot \tau_0=0
%\end{align*}
%
%There exists $\tau_1$ such that $\nabla\cdot \tau_1=0$, $skew(\tau_1)=\gamma_h\chi-skew(\tau_0)$. let $\tau=\tau_0+\tau_1$
%\begin{align*}
%(\gamma_h,as(\tau))=(\gamma_h,as(sym(\tau)))+(\gamma_h,as(skew(\tau)))=2\|\gamma_h\|_0^2
%\end{align*}
%choosing this $\tau$ in \eqref{eq:semi-discrete1}, we have
%\begin{align*}
%(A(\sigma_h+\alpha p_hI),\tau)+(\gamma_h,as(\tau))=0,\\
%\end{align*}
%Therefore
%\begin{align*}
%\|\gamma_h\|_0\leq \|A(\sigma_h+\alpha p_hI)\|_0
%\end{align*}
%
Then an appeal to the discrete Poincar\'{e}-Friedrichs inequalities gives
\begin{align}
\|u_h\|_0\leq C\|A^{\frac{1}{2}}(\sigma_h+\alpha p_hI)\|_0.\label{eq:uh}
\end{align}
Taking $\psi=\sigma_h$, $v=u_h$ and $\eta=\gamma_h$ in \eqref{eq:semi-discrete1}-\eqref{eq:semi-discrete3}, and summing up the resulting equations, we deduce that
\begin{align}
\|\sigma_h\|_0^2\leq C (\|p_h\|_0^2+\epsilon_2\|u_h\|_0^2+\frac{1}{\epsilon_2}\|f\|_0^2).\label{eq:sigma}
\end{align}
Thus, we can infer from \eqref{eq:uh} and \eqref{eq:sigma} that
\begin{align*}
\|u_h\|_0^2\leq C\|A^{\frac{1}{2}}(\sigma_h+\alpha p_hI)\|_0^2\leq C (\|\sigma_h\|_0^2+\|p_h\|_0^2)\leq C\Big(\|p_h\|_0^2+\epsilon_2\|u_h\|_0^2+\frac{1}{\epsilon_2}\|f\|_0^2\Big).
\end{align*}
Choosing $\epsilon_2$ small enough yields
\begin{align}
\|u_h\|_0^2\leq C(\|p_h\|_0^2+\frac{1}{\epsilon_2}\|f\|_0^2).\label{eq:ep2}
\end{align}
On the other hand, we have from \eqref{eq:semi-discrete4} and the inf-sup condition \eqref{eq:inf-sup-bh}
\begin{align}
\|p_h\|_Z\leq C \sup_{\xi\in \Sigma_h}\frac{b_h^*(p_h,\xi)}{\|\xi\|_0}=C \sup_{\xi\in \Sigma_h}\frac{(K^{-1}z_h,\xi)}{\|\xi\|_0}\leq C \|K^{-1}z_h\|_0\label{eq:phZ}.
\end{align}
Combining the discrete Poincar\'{e}-Friedrichs inequalities, \eqref{eq:ep2} and \eqref{eq:phZ} lead to
\begin{align}
\int_0^t (\|u_h\|_0^2+\|p_h\|_0^2)\;ds\leq C\int_0^t (\|K^{-1}z_h\|_0^2+\|f\|_0^2)\;ds\label{eq:uht}.
\end{align}
Therefore, choosing $\epsilon_1$ small enough, we can infer from \eqref{eq:estimate1}, \eqref{eq:phZ} and \eqref{eq:uht} that
%\begin{align*}
%&\|A^{\frac{1}{2}}(\sigma_h+\alpha p_hI)(t)\|_0^2+c_0\|p_h(t)\|_0^2+\int_0^t\|K^{-\frac{1}{2}}z_h\|_0^2\leq C
%\Big(\|f(t)\|_0^2\\
%&\;+\int_0^t(\|f\|_0^2+\|q\|_0^2+\|\partial_t f\|_0^2)\;ds+\|p_h^0\|_0^2
%+\|u_h^0\|_0^2+\|f(0)\|_0^2+\|\sigma_h^0\|_0^2\Big),
%\end{align*}
%which can be combined with \eqref{eq:phZ} and \eqref{eq:uht} yielding
%\begin{align*}
%\|u_h\|_0^2\leq C(\|p_h\|_0^2+\frac{1}{\epsilon_2}\|f\|_0^2)\leq C,\\
%\int_0^t (\|u_h\|_0^2+\|p_h\|_0^2)\leq C
%\end{align*}
\begin{equation}
\begin{split}
&\|A^{\frac{1}{2}}(\sigma_h+\alpha p_hI)(t)\|_0^2+c_0\|p_h(t)\|_0^2+\|u_h(t)\|_0^2
+\|\gamma_h(t)\|_0^2\\
&\;+\int_0^t \Big(\|K^{-\frac{1}{2}}z_h\|_0^2+\|u_h\|_0^2+\|p_h\|_0^2+\|\sigma_h\|_0^2+\|\gamma_h\|_0^2\Big)\;ds\leq
C\Big(\|f(t)\|_0^2\\
&\;+\int_0^t(\|f\|_0^2+\|q\|_0^2+\|\partial_t f\|_0^2)\;ds+\|p_h^0\|_0^2
+\|u_h^0\|_0^2+\|f(0)\|_0^2+\|\sigma_h^0\|_0^2\Big).
\end{split}
\label{eq:semi1}
\end{equation}
Next, differentiating \eqref{eq:semi-discrete1}-\eqref{eq:semi-discrete4} in time and combining them with \eqref{eq:semi-discrete5} with the choices $(\psi,v,\eta,\xi,w)=(\partial_t \sigma_h,\partial_t u_h,\partial_t\gamma_h,z_h,\partial_tp_h)$, we can obtain
\begin{equation}
\begin{split}
&2\int_0^t \Big(\|\partial_t A^{\frac{1}{2}}(\sigma_h+\alpha p_h I)\|_0^2+c_0\|\partial_tp_h\|_0^2\Big)\;ds+\|K^{-\frac{1}{2}}z_h(t)\|_0^2\leq \epsilon_1\Big(\|p_h(t)\|_0^2+\int_0^t \|\partial_t u_h\|_0^2\Big)\\
&+\frac{1}{\epsilon_1}\Big(\|q(t)\|_0^2+\int_0^t\|\partial_t f\|_0^2\;ds\Big)+\int_0^t (\|p_h\|_0^2+\|\partial_t q\|_0^2)\;ds+\|K^{-\frac{1}{2}}z_h^0\|_0^2+\|p_h^0\|_0^2+\|q(0)\|_0^2.
\end{split}
\label{eq:zh}
\end{equation}
Then by inf-sup condition \eqref{eq:inf-sup} and \eqref{eq:semi-discrete1} differentiated in time, we have
\begin{align}
\|\partial_tu_h\|_h+\|\partial_t \gamma_h\|_0\leq C \|\partial_tA^{\frac{1}{2}}(\sigma_h+\alpha p_hI)\|_0.\label{eq:partituh}
\end{align}
Choosing $\epsilon_1$ in \eqref{eq:zh} small enough yields
\begin{align*}
&\int_0^t \Big(\|\partial_t A^{\frac{1}{2}}(\sigma_h+\alpha p_h I)\|_0^2+c_0\|\partial_tp_h\|_0^2\Big)\;ds+\|K^{-\frac{1}{2}}z_h(t)\|_0^2+\|p_h(t)\|_0^2\\
&\leq C\Big(\int_0^t (\|p_h\|_0^2+\|\partial_t q\|_0^2+\|\partial_t f\|_0^2)\;ds+\|q(t)\|_0^2+\|K^{-\frac{1}{2}}z_h^0\|_0^2+\|p_h^0\|_0^2+\|q(0)\|_0^2\Big),
\end{align*}
which coupling with \eqref{eq:semi1} leads to
\begin{equation*}
\begin{split}
&\|\sigma_h(t)\|_0^2+\|p_h(t)\|_0^2+\|u_h(t)\|_0^2+\|K^{-\frac{1}{2}}z_h(t)\|_0^2
+\|\gamma_h(t)\|_0^2\\
&\;+\int_0^t \Big(\|K^{-\frac{1}{2}}z_h\|_0^2+\|u_h\|_0^2+\|p_h\|_0^2+\|\sigma_h\|_0^2
+\|\gamma_h\|_0^2\Big)\;ds\leq
C\Big(\|f(t)\|_0^2+\int_0^t\Big(\|f\|_0^2+\|q\|_0^2\\
&\;+\|\partial_t f\|_0^2+\|\partial_t q\|_0^2\Big)\;ds
+\|q(t)\|_0^2+\|p_h^0\|_0^2
+\|u_h^0\|_0^2+\|f(0)\|_0^2+\|\sigma_h^0\|_0^2+\|K^{-\frac{1}{2}}z_h^0\|_0^2+\|q(0)\|_0^2\Big).
\end{split}
\end{equation*}
Since the discrete initial data $(\sigma_h^0,u_h^0, \gamma_h^0, z_h^0,p_h^0)$ is the elliptic projection of the initial data $(\sigma^0,u^0,\gamma^0, z^0,p^0)$, we have
\begin{align}
\|\sigma_h^0\|_0+\|u_h^0\|+\|\gamma_h^0\|_0+\|z_h^0\|_0+\|p_h^0\|_0\leq C (\|p^0\|_{H^1(\Omega)}+\|f(0)\|_0).\label{eq:initial}
\end{align}
Therefore, the proof is completed.

\end{proof}

Next, we will prove the convergence estimate. To simplify the notation, we let
\begin{align*}
\sigma-\sigma_h&=(\sigma-\Pi_h\sigma)+(\Pi_h\sigma-\sigma_h):=C_\sigma+D_\sigma,\\
u-u_h&=(u-I_hu)+(I_hu-u_h):=C_u+D_u\\
\gamma-\gamma_h&=(\gamma-\pi_h\gamma)+(\pi_h\gamma-\gamma_h):=C_\gamma+D_\gamma,\\
z-z_h&=(z-\Pi_h z)+(\Pi_h z-z_h):=C_z+D_z,\\
p-p_h&=(p-I_hp)+(I_hp-p_h):=C_p+D_p.
\end{align*}

\begin{theorem}
Let $(\sigma_h,u_h,\gamma_h,z_h,p_h)$ be the numerical solution of \eqref{eq:semi-discrete1}-\eqref{eq:semi-discrete5} and assume that the solution of \eqref{eq:weak1}-\eqref{eq:weak5} is sufficiently smooth, then we have the following error estimate
\begin{align*}
&\|\sigma-\sigma_h\|_{L^\infty(0,T;L^2(\Omega))}^2+\|p-p_h\|_{L^\infty(0,T;L^2(\Omega))}^2
+\|u-u_h\|_{L^\infty(0,T;L^2(\Omega))}^2+\|\gamma-\gamma_h\|_{L^\infty(0,T;L^2(\Omega))}^2\\
&\;+\|K^{-\frac{1}{2}}(z-z_h)\|_{L^\infty(0,T;L^2(\Omega))}^2
+\|\sigma-\sigma_h\|_{L^2(0,T;L^2(\Omega))}^2+\|p-p_h\|_{L^2(0,T;L^2(\Omega))}^2\\
&\;+\|u-u_h\|_{L^2(0,T;L^2(\Omega))}^2
+\|\sigma-\sigma_h\|_{L^2(0,T;L^2(\Omega))}^2
+\|K^{-\frac{1}{2}}(z-z_h)\|_{L^2(0,T;L^2(\Omega))}^2\\
&\leq C h^{2(k+1)}\Big(
\|u\|_{L^2(0,T;H^{k+1}(\Omega))}^2
+\|\gamma\|_{H^1(0,T;H^{k+1}(\Omega))}^2
+\|p\|_{H^1(0,T;H^{k+1}(\Omega))}^2+\|\sigma\|_{H^1(0,T;H^{k+1}(\Omega))}^2\\
&\;+\|K^{-\frac{1}{2}}z\|_{H^1(0,T;H^{k+1}(\Omega))}^2
+\|\gamma\|_{L^\infty(0,T;H^{k+1}(\Omega))}^2
+\|K^{-\frac{1}{2}}z\|_{L^\infty(0,T;H^{k+1}(\Omega))}^2+
\|\sigma\|_{L^\infty(0,T;H^{k+1}(\Omega))}^2\\
&\;+\|p\|_{L^\infty(0,T;H^{k+1}(\Omega))}^2
+\|u\|_{L^\infty(0,T;H^{k+1}(\Omega))}^2\Big).
\end{align*}

\end{theorem}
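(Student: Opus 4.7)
The plan is to mirror the stability argument of Theorem~\ref{thm:stability}, but applied to the error equations rather than to the discrete solution itself. First I would subtract \eqref{eq:semi-discrete1}--\eqref{eq:semi-discrete5} from \eqref{eq:weak1}--\eqref{eq:weak5} to obtain error equations for $\sigma-\sigma_h$, $u-u_h$, $\gamma-\gamma_h$, $z-z_h$, $p-p_h$. Splitting each error into $C_\bullet + D_\bullet$ and invoking the orthogonality identities \eqref{eq:Pih}, \eqref{eq:Ih}, \eqref{eq:Pih2}, \eqref{eq:Ih2} kills the $B_h$, $B_h^*$, $b_h$, $b_h^*$ contributions of the $C_\bullet$ parts. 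What remains is a system in the unknowns $(D_\sigma,D_u,D_\gamma,D_z,D_p)\in [\Sigma_h]^2\times[S_h]^2\times M_h\times\Sigma_h\times S_h$ whose left-hand side has exactly the structure of \eqref{eq:semi-discrete1}--\eqref{eq:semi-discrete5}, and whose right-hand side is a controlled combination of $A C_\sigma$, $\alpha A C_p I$, $K^{-1}C_z$, $\mathrm{as}(C_\sigma)$, $\partial_t(A C_\sigma)$, $\partial_t(\alpha A C_p I)$, and $c_0 \partial_t C_p$, all of which are bounded by the interpolation estimates \eqref{eq:interpolation1}, \eqref{eq:interpolation2}.

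Next I would replay the stability testing, namely differentiate the first error equation in time and test with $(\psi,v,\eta,\xi,w)=(D_\sigma,\partial_t D_u,\partial_t D_\gamma,D_z,D_p)$, then sum. The antisymmetric couplings cancel as in Theorem~\ref{thm:stability}, producing
\begin{align*}
\tfrac{1}{2}\tfrac{d}{dt}\bigl(\|A^{1/2}(D_\sigma+\alpha D_p I)\|_0^2+c_0\|D_p\|_0^2\bigr)+\|K^{-1/2}D_z\|_0^2 = \mathcal{R}(t),
\end{align*}
where $\mathcal{R}(t)$ contains only $L^2$ pairings of $(D_\sigma,D_u,D_\gamma,D_z,D_p)$ against the interpolation residuals $C_\bullet$ and their time derivatives. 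Integrating in time and applying Cauchy--Schwarz/Young, together with the discrete Poincar\'e--Friedrichs inequality and the bounds $\|D_u\|_0\lesssim\|A^{1/2}(D_\sigma+\alpha D_p I)\|_0$ and $\|D_p\|_Z\lesssim\|K^{-1/2}D_z\|_0$ obtained from the inf-sup conditions \eqref{eq:inf-sup} and \eqref{eq:inf-sup-bh} exactly as in \eqref{eq:uh}--\eqref{eq:phZ}, absorbs the ``bad'' error-norm terms on the right-hand side after choosing the Young parameters small enough. This gives the $L^\infty(0,T;L^2)$ and $L^2(0,T;L^2)$ bounds on $D_\sigma+\alpha D_p I$, $c_0^{1/2}D_p$, $D_z$, $D_u$, $D_p$, $D_\sigma$, $D_\gamma$ in terms of $\|p^0\|_{H^1}$ and $L^2/H^1$-in-time norms of the $C_\bullet$ residuals.

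To obtain the $L^\infty(0,T;L^2)$ bound on $D_z$ I would differentiate the first four error equations in time, test with $(\partial_t D_\sigma,\partial_t D_u,\partial_t D_\gamma,D_z,\partial_t D_p)$, sum as in \eqref{eq:zh}--\eqref{eq:partituh}, and control the new right-hand side contributions $\partial_t C_\sigma$, $\partial_t C_p$, $\partial_t C_z$ by the interpolation error estimates applied to $\partial_t \sigma$, $\partial_t p$, $\partial_t z$. Combining this bound with the previous estimate and invoking \eqref{eq:interpolation1}, \eqref{eq:interpolation2} for all interpolation residuals, and finally splitting via the triangle inequality $\|\sigma-\sigma_h\|_0\le\|C_\sigma\|_0+\|D_\sigma\|_0$ and similarly for the other unknowns, produces the claimed $h^{2(k+1)}$ convergence rate with the stated regularity norms on the right-hand side.

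The main technical obstacle is guaranteeing that all algebraic manipulations remain uniform in $c_0\ge 0$ and in the anisotropy ratio $\varrho_B$. The key is that the energy identity only produces the $c_0\|D_p\|_0^2$ term with nonnegative sign, so the limit $c_0=0$ is allowed; the control of $\|D_p\|_0$ when $c_0=0$ must then come entirely from the inf-sup inequality \eqref{eq:inf-sup-bh} via $\|D_p\|_Z\lesssim\|K^{-1/2}D_z\|_0$, and similarly the control of $\|D_\sigma\|_0$ and $\|D_u\|_0$ must come entirely from \eqref{eq:inf-sup}; this is why those inf-sup conditions are essential and why the storativity-free analysis works. Uniformity in $\varrho_B$ is maintained by always keeping the weighted norm $\|K^{-1/2}\cdot\|_0$ together, never splitting off the constant from $K^{-1}$, and by using the same weighted norm on the interpolation residual $C_z$, which is permissible because $\Pi_h$ is polynomial-preserving and the constants in \eqref{eq:interpolation1} are independent of $K$.
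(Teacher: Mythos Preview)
Your proposal is correct and follows essentially the same route as the paper: derive the error equations, differentiate the first in time and test with $(D_\sigma,\partial_t D_u,\partial_t D_\gamma,D_z,D_p)$, integrate in time, use the inf-sup conditions \eqref{eq:inf-sup-bh} and \eqref{eq:inf-sup} to recover $\|D_p\|_0$, $\|D_\sigma\|_0$, $\|D_u\|_h$, $\|D_\gamma\|_0$, then differentiate the first four equations in time and test with $(\partial_t D_\sigma,\partial_t D_u,\partial_t D_\gamma,D_z,\partial_t D_p)$ to capture $\|K^{-1/2}D_z\|_{L^\infty}$. One small point you gloss over: after the first testing step the right-hand side contains the term $(as(C_\sigma),\partial_t D_\gamma)$, i.e.\ a pairing with $\partial_t D_\gamma$ rather than with $D_\gamma$, so it does not fit your description of $\mathcal{R}(t)$; the paper handles it by integration by parts in time, converting it to $-\int_0^t(\partial_t\,as(C_\sigma),D_\gamma)\,ds$ plus boundary terms, after which your absorption argument goes through.
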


\begin{proof}

We obtain the following error equations by replacing $(\sigma,u,\gamma,z,p)$ by $(\sigma_h,u_h,\gamma_h,z_h,p_h)$ and performing integration by parts on \eqref{eq:semi-discrete1}-\eqref{eq:semi-discrete5}
\begin{align}
(A(D_\sigma+\alpha D_pI),\psi)- B_h^*(D_u,\psi)+(D_\gamma,as(\psi))&=-(A(C_\sigma+\alpha C_pI),\psi)\nonumber\\
&+ B_h^*(C_u,\psi)-(C_\gamma,as(\psi)),\label{eq:error1}\\
B_h(D_\sigma,v)&=-B_h(C_\sigma,v),\label{eq:error2}\\
(as(D_\sigma), \eta)&=-(as(C_\sigma), \eta),\label{eq:error3}\\
(K^{-1}D_z,\xi)+b_h^*(D_p,\xi)&=-(K^{-1}C_z,\xi)-b_h^*(C_p,\xi),\label{eq:error4}\\
c_0(\partial_tD_p,w)+\alpha(\partial_tA(D_\sigma+\alpha D_pI),wI)-b_h(D_z,w)&=-c_0(\partial_tC_p,w)\nonumber\\
&-\alpha(\partial_tA(C_\sigma+\alpha D_pI),wI)+b_h(C_z,w)\label{eq:error5}
\end{align}
for $(\psi,v,\eta,\xi,w)\in [\Sigma_h]^2\times [S_h]^2\times M_h\times \Sigma_h\times S_h$.
%
%\begin{align*}
%(\alpha\partial_tA(D_\sigma+\alpha D_pI),D_\sigma)-\alpha b_h^*(\partial_tD_u,D_\sigma)+\alpha(\partial_tD_\gamma,as(D_\sigma))
%&=-\alpha(\partial_tA(C_\sigma+\alpha C_pI),D_\sigma)\\
%&+\alpha b_h^*(\partial_tC_u,D_\sigma)-\alpha(\partial_tC_\gamma,as(D_\sigma)),\\
%b_h(D_\sigma,\partial_tD_u)&=-b_h(C_\sigma,\partial_tD_u),\\
%(as(D_\sigma), \partial_tD_\gamma)&=-(as(C_\sigma), \partial_tD_\gamma),\\
%(K^{-1}D_z,D_z)+b_h^*(D_p,D_z)&=-(K^{-1}C_z,D_z)-b_h^*(C_p,D_z),\\
%c_0(\partial_tD_p,D_p)+\alpha(\partial_tA(D_\sigma+\alpha D_pI),D_pI)-b_h(D_z,D_p)&=-c_0(\partial_tC_p,D_p)\\
%&-\alpha(\partial_tA(C_\sigma+\alpha D_pI),D_pI)+b_h(C_z,D_p)
%\end{align*}
%

Differentiating \eqref{eq:error1} with time, taking $\psi=D_\sigma, v=\partial_tD_u,\eta=\partial_tD_\gamma,\xi=D_z,w=D_p$, and summing up the resulting equations yield
\begin{equation}
\begin{split}
&\frac{1}{2}\partial_t \Big(\|A^{\frac{1}{2}}(D_\sigma+\alpha D_pI)\|_0^2+c_0\|D_p\|_0^2\Big)+\|K^{-\frac{1}{2}}D_z\|_0^2
=-(\partial_tA(C_\sigma+\alpha C_pI),D_\sigma)\\
&\;+B_h^*(\partial_tC_u,D_\sigma)
-(\partial_tC_\gamma,as(D_\sigma))-B_h(C_\sigma,\partial_tD_u)
+(as(C_\sigma),\partial_t D_\gamma)
-(K^{-1}C_z,D_z)\\
&\;-b_h^*(C_p,D_z)
-c_0(\partial_tC_p,D_p)
-\alpha(\partial_tA(C_\sigma+\alpha C_pI),D_pI)+b_h(C_z,D_p).
\end{split}
\label{eq:error-sum}
\end{equation}
It follows from \eqref{eq:Pih} and \eqref{eq:Ih} that
\begin{align*}
B_h^*(\partial_tC_u,D_\sigma)=0,\quad -B_h(C_\sigma,\partial_tD_u)=0,
\quad b_h^*(C_p,D_z)=0,\quad b_h(C_z,D_p)=0.
\end{align*}
In addition, the Cauchy-Schwarz inequality yields
\begin{align*}
|-(\partial_tA(C_\sigma+\alpha C_pI),D_\sigma)|&\leq \|\partial_tA^{\frac{1}{2}}(C_\sigma+\alpha C_pI\|_0\|A^{\frac{1}{2}}D_\sigma\|_0,\\
|(\partial_tC_\gamma,as(D_\sigma))|&\leq \|\partial_tC_\gamma\|_0\|as(D_\sigma)\|_0,\\
|(K^{-1}C_z,D_z)|&\leq \|K^{-\frac{1}{2}}C_z\|_0\|K^{-\frac{1}{2}}D_z\|_0,\\
|c_0(\partial_tC_p,D_p)|&\leq c_0\|\partial_tC_p\|_0\|D_p\|_0,\\
|(\partial_tA(C_\sigma+\alpha C_pI),D_pI)|&\leq  \|\partial_tA^{\frac{1}{2}}(C_\sigma+\alpha C_pI)\|_0\|D_p\|_0.
\end{align*}
Therefore, we have from \eqref{eq:error-sum} that
\begin{align*}
&\frac{1}{2}\partial_t \Big(\|A^{\frac{1}{2}}(D_\sigma+D_pI)\|_0^2+c_0\|D_p\|_0^2\Big)+\|K^{-\frac{1}{2}}D_z\|_0^2\leq \|\partial_tA^{\frac{1}{2}}(C_\sigma+\alpha C_pI)\|_0\|A^{\frac{1}{2}}D_\sigma\|_0+\|\partial_tC_\gamma\|_0\|D_\sigma\|_0\\
&\;+(as(C_\sigma),\partial_t D_\gamma)+\|K^{-\frac{1}{2}}C_z\|_0\|K^{-\frac{1}{2}}D_z\|_0+c_0\|\partial_tC_p\|_0\|D_p\|_0
+\|\partial_tA^{\frac{1}{2}}(C_\sigma+\alpha C_pI)\|_0\|D_p\|_0.
\end{align*}
Integrating in time from $0$ to an arbitrary $t\in (0,T]$ results in
\begin{equation}
\begin{split}
&\frac{1}{2}\Big(\|A^{\frac{1}{2}}(D_\sigma+D_pI)(t)\|_0^2+c_0\|D_p(t)\|_0^2\Big)
+\int_0^t\|K^{-\frac{1}{2}}D_z\|_0^2\;ds\leq \frac{1}{2}\Big(\|A^{\frac{1}{2}}(D_\sigma+D_pI)(0)\|_0^2\\
&\;+c_0\|D_p(0)\|_0^2\Big)-\int_0^t(\partial_t as(C_\sigma),D_\gamma)\;ds+\frac{1}{\epsilon_2}\|C_\sigma(t)\|_0^2
+\epsilon_2\|D_\gamma(t)\|_0^2-(as(C_\sigma),D_\gamma)(0)\\
&\;+\epsilon_1\int_0^t \Big(\|D_p\|_0^2+\|K^{-\frac{1}{2}}D_z\|_0^2+\|D_\sigma\|_0^2\Big)\;ds
+\frac{1}{\epsilon_1}\int_0^t\Big(\|\partial_tC_\gamma\|_0^2
+\|K^{-\frac{1}{2}}C_z\|_0^2
+\|\partial_tC_p\|_0^2\\
&\;+\|\partial_tA^{\frac{1}{2}}(C_\sigma+\alpha C_pI)\|_0^2\Big)\;ds.
\end{split}
\label{eq:estimate1p}
\end{equation}
It follows from the inf-sup condition \eqref{eq:inf-sup} and \eqref{eq:error1} that
\begin{equation}
\begin{split}
\|D_u\|_h+\|D_\gamma\|_0&\leq C \sup_{\psi\in [\Sigma_h]^2}\frac{-B_h^*(D_u,\psi)+(as(\psi),D_\gamma)}{\|\psi\|_0}\\
&=C\sup_{\psi\in [\Sigma_h]^2}\frac{-(A(D_\sigma+\alpha D_pI),\psi)-(A(C_\sigma+\alpha C_pI),\psi)-(C_\gamma,as(\psi))}{\|\psi\|_0}\\
&\leq C\Big(\|A^{\frac{1}{2}}(D_\sigma+\alpha D_pI)\|_0+\|A^{\frac{1}{2}}(C_\sigma+\alpha C_pI)\|_0+\|C_\gamma\|_0\Big).
\end{split}
\label{eq:Du}
\end{equation}
Taking $\psi=D_\sigma, v=D_u, \eta=D_\gamma$ in \eqref{eq:error1}-\eqref{eq:error3}, and summing up the resulting equations yield
\begin{align*}
(A(D_\sigma+\alpha D_pI),D_\sigma)=-(A(C_\sigma+\alpha C_pI),D_\sigma)-(C_\gamma,as(D_\sigma))+(as(C_\sigma), D_\gamma),
\end{align*}
which together with Young's inequality implies
\begin{align*}
\|A^{\frac{1}{2}}D_\sigma\|_0^2\leq C\Big( \|D_p\|_0^2+\|C_\sigma\|_0^2+\|C_p\|_0^2+\|C_\gamma\|_0^2+
\frac{1}{\epsilon_0}\|C_\sigma\|_0^2+\epsilon_0\|D_\gamma\|_0^2\Big).
\end{align*}
Taking $\epsilon_0$ small enough and using \eqref{eq:Du}, we can obtain
\begin{align}
\|D_\sigma\|_0^2\leq C \Big(\|C_p\|_0^2+\|D_p\|_0^2+\|C_\sigma\|_0^2+\|C_\gamma\|_0^2\Big).\label{eq:Ds}
\end{align}
%which can be combined with \eqref{eq:Du}, and choosing $\epsilon_1$ small enough yielding
%\begin{align}
%\|D_\sigma\|_0^2\leq C (\|D_p\|_0^2+\|C_\sigma\|_0^2+\|C_\gamma\|_0^2).\label{eq:Dsigma}
%\end{align}
It follows from \eqref{eq:inf-sup-bh} and \eqref{eq:error4} that
\begin{align}
\|D_p\|_Z\leq C \sup_{\xi\in \Sigma_h}\frac{b_h^*(D_p,\xi)}{\|\xi\|_0}=C \sup_{\xi\in \Sigma_h}\frac{-(K^{-1}D_z,\xi)-(K^{-1}C_z,\xi)}{\|\xi\|_0}\leq C(\|K^{-1}D_z\|_0+\|K^{-1}C_z\|_0),\label{eq:Dph}
\end{align}
which together with the discrete Poincar\'{e}-Friedrichs inequalities implies
\begin{align*}
\int_0^t \|D_p\|_0\;ds\leq\int_0^t \|D_p\|_Z\;ds\leq \int_0^t(\|K^{-1}D_z\|_0+\|K^{-1}C_z\|_0)\;ds.
\end{align*}
An application of \eqref{eq:Ds} and \eqref{eq:Dph} leads to
\begin{align*}
\int_0^t \|D_\sigma\|_0\;ds%\leq C\int_0^t\Big(\|C_p\|_0+\|D_p\|_0+\|C_\sigma\|_0+\|C_\gamma\|_0\Big)\;ds\\
&\leq C\int_0^t\Big(\|K^{-1}D_z\|_0+\|K^{-1}C_z\|_0+\|C_\sigma\|_0+\|C_\gamma\|_0+\|C_p\|_0\Big)\;ds.
\end{align*}
Besides, we can infer from \eqref{eq:Du} and \eqref{eq:Ds} that
\begin{align*}
\|D_u\|_h+\|D_\gamma\|_0\leq C\Big(\|K^{-1}D_z\|_0+\|K^{-1}C_z\|_0+\|C_\sigma\|_0+\|C_\gamma\|_0+\|C_p\|_0\Big).
\end{align*}
Choosing $\epsilon_1$ and $\epsilon_2$ in \eqref{eq:estimate1p} small enough, we can infer from the preceding arguments %
\begin{equation}
\begin{split}
&\|A^{\frac{1}{2}}(D_\sigma+D_pI)(t)\|_0^2+c_0\|D_p(t)\|_0^2
+\int_0^t\Big(\|K^{-\frac{1}{2}}D_z\|_0^2+\|D_\sigma\|_0^2+\|D_p\|_0^2+\|D_\gamma\|_0^2\\
&\;+\|D_u\|_0^2\Big)\;ds
\leq \frac{1}{2}\Big(\|A^{\frac{1}{2}}(D_\sigma+D_pI)(0)\|_0^2+c_0\|D_p(0)\|_0^2\Big)
+\|C_\sigma(0)\|_0^2
+\|D_\gamma(0)\|_0^2+\|C_\sigma(t)\|_0^2
\\
&+\int_0^t \Big(\|C_\gamma\|_0^2+\|K^{-\frac{1}{2}}C_z\|_0^2
+\|\partial_tC_p\|_0^2
+\|C_\sigma\|_0^2+\|\partial_t C_\sigma\|_0^2+\|C_p\|_0^2+\|\partial_tC_\gamma\|_0^2\Big)\;ds.
\end{split}
\label{eq:bound1}
\end{equation}
Differentiating \eqref{eq:error1}-\eqref{eq:error4} in time and setting $(\psi,v,\eta,\xi,w)=(\partial_t D_\sigma,\partial_t D_u,\partial_t D_\gamma, D_z,\partial_t D_p)$, we can obtain
\begin{align*}
&\|\partial_tA^{\frac{1}{2}}(D_\sigma+\alpha D_p I)\|_0^2+c_0\|\partial_t D_p\|_0^2+\frac{1}{2}\partial_t\|K^{-\frac{1}{2}}D_z\|_0^2=-(\partial_tA(C_\sigma+\alpha C_pI),\partial_t D_\sigma+\alpha \partial_t D_pI)\\
&-(\partial_tC_\gamma,as(\partial_t D_\sigma))+(\partial_tas(C_\sigma), \partial_t D_\gamma)-(\partial_tK^{-1}C_z,D_z)-c_0(\partial_tC_p,\partial_t D_p).
\end{align*}
%
%\Red{
%\begin{align*}
%(\partial_tC_\gamma,as(\partial_t D\sigma))=(\partial_tC_\gamma,as(\partial_t D\sigma+\alpha D_pI))
%\end{align*}
%}
Integrating over time  and using $as(\partial_t D_\sigma)=as(\partial_t (D_\sigma+\alpha D_pI ))$ yield
\begin{equation}
\begin{split}
&\int_0^t \|\partial_tA^{\frac{1}{2}}(D_\sigma+\alpha D_p I)\|_0^2\;ds+c_0\int_0^t \|\partial_t D_p\|_0^2\;ds+\frac{1}{2}\|K^{-\frac{1}{2}}D_z(t)\|_0^2\\
&\leq \epsilon_1\int_0^t \|\partial_tA^{\frac{1}{2}}(D_\sigma+\alpha D_p I)\|_0^2\;ds +\frac{1}{4\epsilon_1}\Big(\int_0^t \|\partial_tA^{\frac{1}{2}}(C_\sigma+\alpha C_pI)\|_0^2\;ds+\int_0^t \|\partial_tC_\gamma\|_0^2\;ds\Big)\\
&\;+\frac{1}{4\epsilon_2}\int_0^t \|\partial_t C_\sigma\|_0^2\;ds+\epsilon_2\int_0^t\|\partial_tD_\gamma\|_0^2\;ds+\int_0^t \|K^{-\frac{1}{2}}\partial_t C_z\|_0^2\;ds+\int_0^t \|K^{-\frac{1}{2}}D_z\|_0^2\;ds\\
&\;+\frac{c_0}{2}\int_0^t\|\partial_t C_p\|_0^2\;ds+\frac{c_0}{2}\int_0^t \|\partial_t D_p\|_0^2\;ds+\frac{1}{2}\|K^{-\frac{1}{2}}D_z(0)\|_0^2.
%&\;+\int_0^t\|\partial_t D\sigma\|_0^2\;ds+\int_0^t\|\partial_tC_\gamma\|_0^2\;ds+\int_0^t\|\partial_t D\sigma\|_0^2\;ds+\int_0^t \|\partial_t C_\sigma\|_0^2+\int_0^t \|\partial_t D_\gamma\|_0^2\\
%&\;+\int_0^t\|\partial_tK^{-1}C_z\|_0^2+\int_0^t \|D_z\|_0^2+\int_0^t c_0\|\partial_tC_p\|_0^2+\int_0^t \|\partial_t D_p\|_0^2\\
%&+\int_0^t \|\partial_tA(C_\sigma+\alpha D_pI)\|_0^2\;ds+\int_0^t\|\partial_t D_p\|_0^2+\|K^{-1/2}D_z(0)\|_0^2
\end{split}
\label{eq:estimate2}
\end{equation}
By inf-sup condition \eqref{eq:inf-sup} and \eqref{eq:error1} differentiated in time, we can obtain
\begin{align*}
\|\partial_tD_u\|_h+\|\partial_t D_\gamma\|_0\leq C\| \partial_tA^{\frac{1}{2}}(D_\sigma+\alpha D_p I)\|_0.
\end{align*}
Choosing $\epsilon_1$ and $\epsilon_2$ in \eqref{eq:estimate2} small enough leads to
\begin{align*}
&\int_0^t \|\partial_tA^{\frac{1}{2}}(D_\sigma+\alpha D_p I)\|_0^2\;ds+c_0\int_0^t \|\partial_t D_p\|_0^2\;ds+\|K^{-\frac{1}{2}}D_z(t)\|_0^2\\
&\;\leq
C\Big(\int_0^t \|\partial_tA^{\frac{1}{2}}(C_\sigma+\alpha C_pI)\|_0^2\;ds+\int_0^t \|\partial_tC_\gamma\|_0^2\;ds+\int_0^t \|\partial_t C_\sigma\|_0^2\;ds+\int_0^t \|K^{-\frac{1}{2}}\partial_t C_z\|_0^2\;ds\\
&\;+c_0\int_0^t\|\partial_t C_p\|_0^2\;ds+\int_0^t \|K^{-\frac{1}{2}}D_z\|_0^2\;ds+\|K^{-\frac{1}{2}}D_z(0)\|_0^2\Big),
\end{align*}
which can be combined with \eqref{eq:Du}, \eqref{eq:Ds}, \eqref{eq:Dph} and \eqref{eq:bound1} implies
\begin{equation*}
\begin{split}
&\|D_\sigma(t)\|_0^2+\|D_p(t)\|_0^2
+\|D_u(t)\|_0^2+\|D_\gamma(t)\|_0^2+\|K^{-\frac{1}{2}}D_z(t)\|_0^2\\
&\;+\int_0^t\Big(\|K^{-\frac{1}{2}}D_z\|_0^2+\|D_\sigma\|_0^2+\|D_p\|_0^2+\|D_\gamma\|_0^2
+\|D_u\|_0^2\Big)\;ds\\
&\leq C\Big(\|A^{\frac{1}{2}}(D_\sigma+D_pI)(0)\|_0^2+c_0\|D_p(0)\|_0^2
+\int_0^t \Big(\|C_\gamma\|_0^2+\|C_z\|_0^2+\|C_\sigma\|_0^2+\|C_p\|_0^2\\
&\;+\|\partial_tC_\gamma\|_0^2
+\|\partial_tC_p\|_0^2+\|\partial_tA^{\frac{1}{2}}(C_\sigma+\alpha C_pI)\|_0^2+\|\partial_t C_\sigma\|_0^2\Big)\;ds+\|K^{-\frac{1}{2}}D_z(0)\|_0^2\\
&\;+\|C_\sigma(0)\|_0^2+\|D_\gamma(0)\|_0^2
+\|K^{-\frac{1}{2}}\partial_t C_z(t)\|_0^2
+\|C_\sigma(t)\|_0^2+\|C_p(t)\|_0^2+\|C_\gamma(t)\|_0^2\Big).
\end{split}
\end{equation*}
The initial data satisfies
\begin{equation}
\begin{split}
&\|D_\sigma(0)\|_0+\|D_\gamma(0)\|_0+\|D_p(0)\|_0+\|K^{-\frac{1}{2}}D_z(0)\|_0\\
&\leq C\Big( \|C_\sigma(0)\|_0+\|C_\gamma(0)\|_0
+\|C_p(0)\|_0+\|K^{-\frac{1}{2}}C_z(0)\|_0\Big).
\end{split}
\label{eq:initial2}
\end{equation}
Therefore, the proof is completed by combining the preceding arguments and the interpolation error estimates \eqref{eq:interpolation1}-\eqref{eq:interpolation2}.

\end{proof}

\begin{remark}
{\em
(robustness of error estimates for nearly incompressible materials). We can observe from the above proof that we employ the coercivity of $A$, while the coercivity of $A$ on $L^2(\Omega)^{2\times 2}$ is not uniform in $\lambda$. Indeed,
\begin{align*}
c_\lambda\|\psi\|_0^2\leq \|A^{\frac{1}{2}}\psi\|_0^2\quad \forall \psi\in L^2(\Omega)^{2\times 2}
\end{align*}
holds with a constant $c_\lambda>0$ but $c_\lambda\rightarrow 0$ as $\lambda\rightarrow +\infty$. It means that our error estimates obtained using the coercivity of $A$ may have error bounds growing unboundedly as $\lambda\rightarrow +\infty$. In order to get a uniform bound, we can follow the framework given in \cite{LeeKim16,LinaParkelasticity20}, which is omitted for simplicity. In fact, proceeding analogously to \cite{LeeKim16,LinaParkelasticity20} we can check that our proposed method has uniformly bounded estimate.
}
\end{remark}

\subsection{Error analysis for fully discrete scheme}

In this subsection we analyze the convergence estimates for the fully discrete scheme. To this end we introduce a partition of the time interval $[0,T]$ into subintervals $[t_{n-1},t_n],1\leq n\leq N (N\;\mbox{is an integer})$ and denote the time step size by $\Delta t=\frac{T}{N}$. Using backward Euler scheme in time, we get the fully discrete staggered DG method as follows: Find $(\sigma_h,u_h,\gamma_h,z_h,p_h)\in [\Sigma_h]^2\times [S_h]^2\times M_h\times \Sigma_h\times S_h$ such that
\begin{align}
(A(\sigma_h^{n}+\alpha p_h^{n}I),\psi)-B_h^*(u_h^{n},\psi)+(\gamma_h^n,as(\psi))&=0,\label{eq:fully1}\\
B_h(\sigma_h^n,v)&=(f^n,v),\label{eq:fully2}\\
(as(\sigma_h^n), \eta)&=0,\label{eq:fully3}\\
(K^{-1}z_h^n,\xi)+b_h^*(p_h^n,\xi)&=0,\label{eq:fully4}\\
c_0(\frac{p_h^n-p_h^{n-1}}{\Delta t},w)+\alpha(\frac{A(\sigma_h^n+\alpha p_h^nI)-A(\sigma_h^{n-1}+\alpha p_h^{n-1}I)}{\Delta t},wI)-b_h(z_h^n,w)&=(q^n,w)\label{eq:fully5}
\end{align}
for $(\psi,v,\eta,\xi,w)\in [\Sigma_h]^2\times [S_h]^2\times M_h\times \Sigma_h\times S_h$.

For a Sobolev space $H$ on $\Omega$ with a norm $\|\cdot\|_H$, we define the discrete in time norms by
\begin{align*}
\|\psi\|_{l^2(0,T;H)}:=\Big(\sum_{n=1}^N\Delta t\|\psi\|_H^2\Big)^{1/2},\quad \|\psi\|_{l^{\infty}(0,T;H)}:=\max_{0\leq n\leq N}\|\psi\|_H
\end{align*}
and we let
\begin{align*}
\sigma^n-\sigma_h^n&=(\sigma^n-\Pi_h\sigma^n)+(\Pi_h\sigma^n-\sigma_h^n):=C_\sigma^n+D_\sigma^n,\\
u^n-u_h^n&=(u^n-I_hu^n)+(I_hu^n-u_h^n):=C_u^n+D_u^n\\
\gamma^n-\gamma_h^n&=(\gamma^n-\pi_h\gamma^n)+(\pi_h\gamma^n-\gamma_h^n):=C_\gamma^n+D_\gamma^n,\\
z^n-z_h^n&=(z^n-\Pi_h z^n)+(\Pi_h z^n-z_h^n):=C_z^n+D_z^n,\\
p^n-p_h^n&=(p^n-I_hp^n)+(I_hp^n-p_h^n):=C_p^n+D_p^n.
\end{align*}

\begin{theorem}(existence and uniqueness).
There exists a unique solution to the fully discrete scheme \eqref{eq:fully1}-\eqref{eq:fully5}.
%
%\begin{align*}
%&\|p_h\|_{l^\infty(0,T;L^2(\Omega))}^2+\|K^{-\frac{1}{2}}z_h\|_{l^\infty(0,T;L^2(\Omega))}^2+
%\|\gamma_h\|_{l^\infty(0,T;L^2(\Omega))}^2+\|u_h\|_{l^\infty(0,T;L^2(\Omega))}^2
%+\|\sigma_h\|_{l^\infty(0,T;L^2(\Omega))}^2\\
%&\;\|p_h\|_{l^2(0,T;L^2(\Omega))}^2+\|K^{-\frac{1}{2}}z_h\|_{l^2(0,T;L^2(\Omega))}^2+
%\|\gamma_h\|_{l^2(0,T;L^2(\Omega))}^2+\|u_h\|_{l^2(0,T;L^2(\Omega))}^2
%+\|\sigma_h\|_{l^2(0,T;L^2(\Omega))}^2\\
%&\leq C\Big(\sum_{j=1}^N\|f^j\|_0^2+\Delta t \sum_{j=1}^N\|q^j\|_0^2+\|p^0\|_1^2+\|f(0)\|_0^2+\Big( \sum_{j=1}^N\Big(\frac{1}{\Delta t}\|f^j-f^{j-1}\|_0^2+\|q^j\|_0^2\Big)\Big)\Big).
%\end{align*}

\end{theorem}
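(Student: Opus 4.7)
The plan is to reduce existence/uniqueness at each time step to a routine finite-dimensional argument, exploiting the same energy-cancellation structure used in the semi-discrete proof. Since \eqref{eq:fully1}--\eqref{eq:fully5} is a square linear system for the unknowns $(\sigma_h^n,u_h^n,\gamma_h^n,z_h^n,p_h^n)$ at time level $n$, given data $(f^n,q^n,\sigma_h^{n-1},p_h^{n-1})$, it suffices to show that the associated homogeneous problem (obtained by setting $f^n=0$, $q^n=0$, and the previous step data to zero) admits only the trivial solution; this yields invertibility and hence existence and uniqueness by induction on $n$.

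To establish the homogeneous uniqueness, I would test equation \eqref{eq:fully1} with $\psi=\sigma_h^n$, equation \eqref{eq:fully2} with $v=u_h^n$, equation \eqref{eq:fully3} with $\eta=\gamma_h^n$, equation \eqref{eq:fully4} with $\xi=\Delta t\,z_h^n$, and equation \eqref{eq:fully5} with $w=\Delta t\,p_h^n$, then add everything. The adjoint identities $B_h(\sigma_h^n,u_h^n)=B_h^\ast(u_h^n,\sigma_h^n)$ and $b_h(z_h^n,p_h^n)=b_h^\ast(p_h^n,z_h^n)$ cause the mixed coupling terms to cancel, while $(as(\sigma_h^n),\gamma_h^n)$ vanishes in view of \eqref{eq:fully3}. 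Combining the terms $(A(\sigma_h^n+\alpha p_h^n I),\sigma_h^n)+\alpha(A(\sigma_h^n+\alpha p_h^n I),p_h^n I)=\|A^{1/2}(\sigma_h^n+\alpha p_h^n I)\|_0^2$, the sum collapses to
\begin{equation*}
\|A^{1/2}(\sigma_h^n+\alpha p_h^n I)\|_0^2+c_0\|p_h^n\|_0^2+\Delta t\,\|K^{-1/2}z_h^n\|_0^2=0,
\end{equation*}
which forces $\sigma_h^n+\alpha p_h^n I=0$ and $z_h^n=0$.

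With $z_h^n=0$, \eqref{eq:fully4} gives $b_h^\ast(p_h^n,\xi)=0$ for all $\xi\in\Sigma_h$; applying the inf-sup condition \eqref{eq:inf-sup-bh} and the discrete Poincaré--Friedrichs inequality then yields $p_h^n=0$, and consequently $\sigma_h^n=0$. Finally, inserting $\sigma_h^n=0$ and $p_h^n=0$ into \eqref{eq:fully1} and invoking the inf-sup condition \eqref{eq:inf-sup} produces $\|u_h^n\|_h+\|\gamma_h^n\|_0=0$, so $u_h^n=0$ and $\gamma_h^n=0$.

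I do not expect any serious obstacle here: the argument is essentially the range-condition verification already carried out in the proof of the semi-discrete existence theorem, with the time derivative replaced by a difference quotient and tested against $\Delta t$-weighted choices so that the monotone backward-Euler structure preserves the cancellations. The only subtlety worth highlighting in the write-up is the factor $\Delta t$ in the flow test functions, which is what permits the $\|K^{-1/2}z_h^n\|_0^2$ contribution to appear with a positive coefficient and closes the energy identity.
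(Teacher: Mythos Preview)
Your argument is correct and follows essentially the same route as the paper: reduce to the homogeneous system at a fixed time level, test with $(\sigma_h^n,u_h^n,\gamma_h^n,z_h^n,p_h^n)$ to obtain the energy identity $\|A^{1/2}(\sigma_h^n+\alpha p_h^nI)\|_0^2+c_0\|p_h^n\|_0^2+\Delta t\|K^{-1/2}z_h^n\|_0^2=0$, and then use the inf--sup conditions \eqref{eq:inf-sup-bh} and \eqref{eq:inf-sup} in turn to conclude $p_h^n=0$, $\sigma_h^n=0$, and $u_h^n=\gamma_h^n=0$. The only cosmetic difference is that the paper first rewrites the homogeneous mass-balance equation as $c_0(p_h^n,w)+\alpha(A(\sigma_h^n+\alpha p_h^nI),wI)-\Delta t\,b_h(z_h^n,w)=0$ and tests with unscaled $(\xi,w)=(z_h^n,p_h^n)$, whereas you keep the $1/\Delta t$ factors and compensate by scaling the flow test functions; the computations are identical.
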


\begin{proof}

Since \eqref{eq:fully1}-\eqref{eq:fully1} is a square linear system, uniqueness implies existence. It suffices to show the uniqueness. To this end, suppose $f^n,q^n$ and the $(n-1)$th time step solution vanishes. Then we have from \eqref{eq:fully1}-\eqref{eq:fully5}
\begin{align}
(A(\sigma_h^{n}+\alpha p_h^{n}I),\psi)-B_h^*(u_h^{n},\psi)+(\gamma_h^n,as(\psi))&=0\quad \forall \psi\in [\Sigma_h]^2,\label{eq:fully11}\\
B_h(\sigma_h^n,v)&=0\quad \forall v\in [S_h]^2,\label{eq:fully21}\\
(as(\sigma_h^n), \eta)&=0\quad \forall \eta\in M_h,\label{eq:fully31}\\
(K^{-1}z_h^n,\xi)+b_h^*(p_h^n,\xi)&=0\quad \forall \xi\in \Sigma_h,\label{eq:fully41}\\
c_0(p_h^n,w)+\alpha(A(\sigma_h^n+\alpha p_h^nI),wI)-\Delta tb_h(z_h^n,w)&=0\quad \forall w\in S_h\label{eq:fully51}.
\end{align}
Taking $\psi=\sigma_h^n, v=u_h^n, \eta=\gamma_h^n, \xi=z_h^n, w=p_h^n$ in \eqref{eq:fully11}-\eqref{eq:fully51} and summing up the resulting equations yield
\begin{align*}
\|A^{\frac{1}{2}}(\sigma_h^n+\alpha p_h^nI)\|_0^2+c_0\|p_h^n\|_0^2+\Delta t\|K^{-\frac{1}{2}}z_h^n\|_0^2=0,
\end{align*}
thereby $\sigma_h^n+\alpha p_h^nI=0$ and $z_h^n=0$.

We can infer from inf-sup condition \eqref{eq:inf-sup-bh} and \eqref{eq:fully41} that
\begin{align*}
\|p_h^n\|_Z\leq C\|K^{-1}z_h^n\|_0,
\end{align*}
which implies $p_h^n=0$. Hence $\sigma_h^n=0$ because of $\sigma_h^n+\alpha p_h^nI=0$.

It follows from \eqref{eq:inf-sup} and \eqref{eq:fully11} that
\begin{align*}
\|u_h^n\|_h+\|\gamma_h^n\|_0\leq C \|A^{\frac{1}{2}}(\sigma_h^n+\alpha p_h^nI)\|_0,
\end{align*}
which gives $u_h^n=0$ and $\gamma_h^n=0$. This completes the proof.

\end{proof}

\begin{theorem}
Let $(\sigma_h,u_h,\gamma_h,z_h,p_h)\in [\Sigma_h]^2\times [S_h]^2\times M_h\times \Sigma_h\times S_h$ be the numerical solution of the fully discrete scheme \eqref{eq:fully1}-\eqref{eq:fully5} and assume that the solution of \eqref{eq:weak1}-\eqref{eq:weak5} is sufficiently smooth, then we have the following convergence estimate
\begin{equation*}
\begin{split}
&\|K^{-\frac{1}{2}}(z-z_h)\|_{l^2(0,T;L^2(\Omega))}^2+\|\sigma-\sigma_h\|_{l^2(0,T;L^2(\Omega))}^2
+\|z-z_h\|_{l^2(0,T;L^2(\Omega))}^2+\|u-u_h\|_{l^2(0,T;L^2(\Omega))}^2\\
&\;+\|\sigma-\sigma_h\|_{l^2(0,T;L^2(\Omega))}^2
+\|p-p_h\|_{l^\infty(0,T;L^2(\Omega))}^2+\|K^{-\frac{1}{2}}(z-z_h)\|_{l^\infty(0,T;L^2(\Omega))}^2
+\|\sigma-\sigma_h\|_{l^\infty(0,T;L^2(\Omega))}^2\\
&\;+\|\gamma-\gamma_h\|_{l^\infty(0,T;L^2(\Omega))}^2
+\|u-u_h\|_{l^\infty(0,T;L^2(\Omega))}^2\leq C\Big(h^{2(k+1)}\Big(
\|p\|_{H^{1}(0,T;H^{k+1}(\Omega))}^2\\
&\;+\|\sigma\|_{H^{1}(0,T;H^{k+1}(\Omega))}^2
+\|K^{-\frac{1}{2}}z\|_{H^{1}(0,T;H^{k+1}(\Omega))}^2
+\|\gamma\|_{H^{1}(0,T;H^{k+1}(\Omega))}^2
+\|\sigma\|_{L^\infty(0,T;H^{k+1}(\Omega))}^2\\
&\;+\|p\|_{L^\infty(0,T;H^{k+1}(\Omega))}^2
+\|u\|_{L^2(0,T;H^{k+1}(\Omega))}^2+\|u\|_{L^\infty(0,T;H^{k+1}(\Omega))}^2
+\|\gamma\|_{L^\infty(0,T;H^{k+1}(\Omega))}^2\\
&\;+\|K^{-\frac{1}{2}}z\|_{L^\infty(0,T;H^{k+1}(\Omega))}^2\Big)
+\Delta t^2\Big(\|p\|_{H^2(0,T;L^2(\Omega))}^2
+\|\sigma\|_{H^2(0,T;L^2(\Omega))}^2\Big)\Big).
\end{split}
\end{equation*}

\end{theorem}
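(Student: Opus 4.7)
The plan is to mimic the semi-discrete error analysis, but with the time derivative replaced by the backward Euler difference quotient and with integration in time replaced by summation $\sum_{n=1}^N \Delta t$. As in the continuous-in-time proof, I first subtract \eqref{eq:fully1}--\eqref{eq:fully5} from the continuous equations \eqref{eq:weak1}--\eqref{eq:weak5} evaluated at $t^n$, decompose the errors into $C^n$-parts (interpolation errors) and $D^n$-parts (discrete errors), and write down a system of error equations for $(D_\sigma^n,D_u^n,D_\gamma^n,D_z^n,D_p^n)$. In that system the time-derivative terms in \eqref{eq:weak5} become $\partial_t p(t^n) = \frac{p^n-p^{n-1}}{\Delta t} + r_p^n$ and analogously for $\sigma$, where $r_p^n,r_\sigma^n$ are backward-Euler truncation remainders satisfying standard Taylor estimates $\sum_n \Delta t\|r_p^n\|_0^2 \le C\Delta t^2\|\partial_{tt}p\|_{L^2(0,T;L^2(\Omega))}^2$ (and similarly for $\sigma$), which is exactly where the $\Delta t^2$ contribution to the final bound enters.

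Next, in analogy with the semi-discrete argument, I would test the first three error equations with $(\psi,v,\eta)=(D_\sigma^n,\tfrac{D_u^n-D_u^{n-1}}{\Delta t},\tfrac{D_\gamma^n-D_\gamma^{n-1}}{\Delta t})$ and the last two with $(\xi,w)=(D_z^n,D_p^n)$, then sum. The orthogonality relations \eqref{eq:Pih}--\eqref{eq:Ih2} kill the cross terms $B_h^*(C_u^n,\cdot)$, $B_h(C_\sigma^n,\cdot)$, $b_h(C_z^n,\cdot)$ and $b_h^*(C_p^n,\cdot)$ just as in the semi-discrete case. Using the discrete identity $(a^n-a^{n-1},a^n) \ge \tfrac{1}{2}(\|a^n\|^2-\|a^{n-1}\|^2)$ on the terms $A(D_\sigma^n+\alpha D_p^n I)$ and $c_0 D_p^n$ produces a telescoping energy. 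Multiplying by $\Delta t$, summing over $n=1,\dots,m$, and using Cauchy--Schwarz and Young on the right-hand side yields a first intermediate bound for $\|A^{1/2}(D_\sigma^m+\alpha D_p^m I)\|_0^2 + c_0\|D_p^m\|_0^2 + \sum_n \Delta t\|K^{-1/2}D_z^n\|_0^2$ in terms of interpolation errors, truncation errors, and $\sum_n \Delta t\|D_p^n\|_0^2$, $\sum_n \Delta t\|D_\sigma^n\|_0^2$.

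To close the estimate I invoke, at each time level, the same three ingredients used in the semi-discrete proof: the inf-sup condition \eqref{eq:inf-sup} applied to the first error equation to control $\|D_u^n\|_h + \|D_\gamma^n\|_0$ by $\|A^{1/2}(D_\sigma^n+\alpha D_p^n I)\|_0$ plus interpolation quantities; the inf-sup condition \eqref{eq:inf-sup-bh} applied to \eqref{eq:fully4} to bound $\|D_p^n\|_Z$ by $\|K^{-1}D_z^n\|_0 + \|K^{-1}C_z^n\|_0$; and the test with $(\psi,v,\eta)=(D_\sigma^n,D_u^n,D_\gamma^n)$ in the first three error equations to bound $\|D_\sigma^n\|_0$ by $\|D_p^n\|_0$ and interpolation remainders, as in \eqref{eq:Ds}. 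After this, a second energy argument with the time-differentiated system, mirroring the derivation of \eqref{eq:estimate2} in the semi-discrete proof but now with a second-order difference quotient of the discrete equations (needed to bring in $\|K^{-1/2}D_z^n\|_0^2$ in $\ell^\infty$), provides the missing $\ell^\infty$ bound on $K^{-1/2}D_z$ and the $\ell^2$ bounds on $(D_\sigma^n-D_\sigma^{n-1})/\Delta t$, at the price of $\sum_n\Delta t\|\partial_t C_\sigma\|_0^2$, $\sum_n\Delta t\|K^{-1/2}\partial_t C_z\|_0^2$, and the second-order truncation contributions from $\partial_{tt}p$ and $\partial_{tt}\sigma$.

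A discrete Gronwall inequality absorbs the $\sum_n\Delta t\|D_p^n\|_0^2$ terms, and combining with the initial-data bound \eqref{eq:initial2} gives an estimate in which the right-hand side contains only (i) interpolation quantities, which by \eqref{eq:interpolation1}--\eqref{eq:interpolation2} and the Bochner-in-time norms produce the $h^{2(k+1)}$ block of terms in the statement, and (ii) Taylor remainders in time, bounded by $C\Delta t^2(\|\partial_{tt}p\|_{L^2(0,T;L^2)}^2+\|\partial_{tt}\sigma\|_{L^2(0,T;L^2)}^2)$. The triangle inequality $\|\sigma^n-\sigma_h^n\|_0 \le \|C_\sigma^n\|_0 + \|D_\sigma^n\|_0$ (and similarly for the other variables) then delivers the claimed convergence rate. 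The main obstacle will be the careful bookkeeping of terms arising from the mismatch between $\partial_t$ and the backward difference quotient once we differentiate the error equations a second time; in particular, making sure that every appearance of a discrete time derivative applied to a $C$-term is converted to an integrated $\partial_t C$ quantity via $\tfrac{1}{\Delta t}\int_{t^{n-1}}^{t^n}\partial_t C\,ds$ before summing, so the resulting norms match the Bochner norms on the right-hand side of the theorem.
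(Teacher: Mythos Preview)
Your plan is essentially the paper's proof: derive the fully discrete error equations, take a discrete time-difference of the first equation and test with $\psi=D_\sigma^n$, $\xi=D_z^n$, $w=D_p^n$ (and difference quotients for $v,\eta$) to get the telescoping energy, invoke the inf-sup bounds \eqref{eq:inf-sup-bh} and \eqref{eq:inf-sup} at each level to control $D_p^n,D_u^n,D_\gamma^n,D_\sigma^n$, and then run a second energy estimate on the time-differenced system to upgrade $K^{-1/2}D_z$ to $\ell^\infty$. The Taylor remainders are handled exactly as you describe, and the term $(as(C_\sigma^n),D_\gamma^n-D_\gamma^{n-1})$ is treated in the paper by discrete summation by parts in time, which is the ``bookkeeping'' you flag at the end.

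One small point of divergence: you invoke a discrete Gronwall inequality to absorb $\sum_n\Delta t\|D_p^n\|_0^2$, whereas the paper never uses Gronwall. Since you already have $\|D_p^n\|_0\le C\|K^{-1}D_z^n\|_0+C\|K^{-1}C_z^n\|_0$ from \eqref{eq:inf-sup-bh}, the $D_p$ (and then $D_\sigma$) sums feed directly back into $\sum_n\Delta t\|K^{-1/2}D_z^n\|_0^2$ on the left after choosing the Young parameter small, so Gronwall is unnecessary. Dropping it keeps the constant free of any exponential-in-$T$ growth and is consistent with the paper's emphasis on $c_0$-robustness.
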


\begin{proof}

We have the following error equations by performing integration by parts on \eqref{eq:fully1}-\eqref{eq:fully5}
\begin{align}
&(A(\sigma^n-\sigma_h^{n}+\alpha (p^n-p_h^{n})I),\psi)-B_h^*(u^n-u_h^{n},\psi)+(\gamma^n-\gamma_h^n,as(\psi))=0,\label{error-full1}\\
&B_h(\sigma^n-\sigma_h^n,v)=0,\label{error-full2}\\
&(as(\sigma^n-\sigma_h^n), \eta)=0,\label{error-full3}\\
&(K^{-1}(z^n-z_h^n),\xi)+b_h^*(p^n-p_h^n,\xi)=0,\label{error-full4}\\
&\frac{\alpha}{\Delta t}(A(\sigma^n-\sigma_h^n+\alpha p^n-p_h^nI)-A(\sigma^{n-1}-\sigma_h^{n-1}+\alpha (p^{n-1}-p_h^{n-1}I)),wI)\nonumber\\
&\;+\frac{c_0}{\Delta t}(p^n-p_h^n-(p^{n-1}-p_h^{n-1}),w)-b_h(z^n-z_h^n,w)=c_0(\frac{p^n-p^{n-1}}{\Delta t}-p_{t}(:,t_n),w)\nonumber\\
&\;+\frac{\alpha}{\Delta t}(A(\sigma^n-\sigma^{n-1}+\alpha (p^n-p^{n-1})I)-\Delta tA(\sigma_{t}(:,t_n)+\alpha p_{t}(:,t_n)),wI)\label{error-full5}
\end{align}
for $(\psi,v,\eta,\xi,w)\in [\Sigma_h]^2\times [S_h]^2\times M_h\times \Sigma_h\times S_h$.

%\begin{equation}
%\begin{split}
%(A(\sigma^n-\sigma_h^{n}+\alpha p^n-p_h^{n}I),\sigma_h^n-\Pi_h\sigma^n)&=
%(\pi_h\gamma^n-\gamma^n,as(\sigma_h^n-\Pi_h\sigma^n))\\
%&\;+(\pi_h\gamma^n-\gamma_h^n,as(\sigma^n-\Pi_h\sigma^n)).
%\end{split}
%\label{eq:error1-sum}
%\end{equation}
Consider \eqref{error-full1} at iterations $n$ and $n-1$ and taking the difference, then setting $\psi=D_\sigma^n, v=D_u^n, \eta=D_\gamma^n-D_\gamma^{n-1}, \xi=D_z^n, w=D_p^n$ in the difference equation and \eqref{error-full2}-\eqref{error-full5}, and summing up the resulting equations, we have
%
%\begin{equation}
%\begin{split}
%&(K^{-1}(z^n-z_h^n),\Pi_h z^n-z_h^n)+\frac{c_0}{\Delta t}(p^n-p_h^n-(p^{n-1}-p_h^{n-1}),I_hp^n-p_h^n)\\
%&\;+ \frac{\alpha}{\Delta t}(A(\sigma^n-\sigma_h^n+\alpha (p^n-p_h^n)I)-A(\sigma^{n-1}-\sigma_h^{n-1}+\alpha (p^{n-1}-p_h^{n-1})I),(I_hp^n-p_h^n)I)\\
%&=\frac{c_0}{\Delta t}(p^n-p^{n-1}-\Delta tp_{t}(:,t_n),I_hp^n-p_h^n)+\frac{\alpha}{\Delta t}(A(\sigma^n-\sigma^{n-1}+\alpha (p^n-p^{n-1})I)\\
%&\;-\Delta tA(\sigma_{t}+\alpha p_{t}),(I_hp^n-p_h^n)I).
%\end{split}
%\label{eq:error-fully-sum}
%\end{equation}
%Multiplying $\Delta t $ on both sides of \eqref{eq:error-fully-sum}, we can obtain
\begin{equation*}
\begin{split}
&(K^{-1}(z^n-z_h^n),D_z^n)+\frac{c_0}{\Delta t}(p^n-p_h^n-(p^{n-1}-p_h^{n-1}),D_p^n)\\
&\;+ \frac{1}{\Delta t}(A(\sigma^n-\sigma_h^n+\alpha (p^n-p_h^n)I)-A(\sigma^{n-1}-\sigma_h^{n-1}+\alpha (p^{n-1}-p_h^{n-1})I),D_\sigma^n+\alpha D_p^nI)\\
&=\frac{c_0}{\Delta t}(p^n-p^{n-1}-\Delta tp_{t}(:,t_n),D_p^n)+\frac{1}{\Delta t}(A(\sigma^n-\sigma^{n-1}+\alpha (p^n-p^{n-1})I)\\
&\;-\Delta tA(\sigma_{t}(:,t_n)+\alpha p_{t}(:,t_n)),\alpha D_p^nI)-\frac{1}{\Delta t}(C_\gamma^n-C_\gamma^{n-1},as(D_\sigma^n))+\frac{1}{\Delta t}(D_\gamma^n-D_\gamma^{n-1},as(C_\sigma^n)).
\end{split}
\end{equation*}
%Multiplying \eqref{eq:error1-sum} on both sides by $\alpha$, summing \eqref{eq:error1-sum} and \eqref{eq:error-fully-sum2} yields
%\begin{align*}
%&\Delta t(K^{-1}(z^n-z_h^n),z_h^n-\Pi_h z^n)+c_0(p^n-p_h^n-(p^{n-1}-p_h^{n-1}),p_h^n-I_hp^n)\\
%&\;+\alpha (A(\sigma^n-\sigma_h^n+\alpha p^n-p_h^nI)-A(\sigma^{n-1}-\sigma_h^{n-1}+\alpha (p^{n-1}-p_h^{n-1}I)),(p_h^n-I_hp^n)I)\\
%&=c_0(p^n-p^{n-1}-\Delta tp_{tt}(:,t_n),p_h^n-I_hp^n)+\alpha(A(\sigma^n-\sigma^{n-1}+\alpha (p^n-p^{n-1}))-\Delta tA(\sigma_{tt}-\alpha p_{tt}),(p_h^n-I_hp^n)I)
%\end{align*}
%which can be rewritten as
%\begin{align*}
%&\|K^{-\frac{1}{2}}D_z^n\|_0^2+\frac{c_0}{\Delta t}(D_p^n-D_p^{n-1},D_p^n )+\frac{1}{\Delta t} (A(D_\sigma^n+\alpha D_p^nI)-A(D_\sigma^{n-1}+\alpha D_p^{n-1}I),D_\sigma^n+\alpha D_p^nI)\\
%&=-(K^{-1}C_z^n,D_z^n )-\frac{c_0}{\Delta t}(C_p^n-C_p^{n-1},D_p^n )+\frac{c_0}{\Delta t}(p^n-p^{n-1}-\Delta tp_{t}(:,t_n),D_p^n)\\
%&\;+\frac{1}{\Delta t}(A(\sigma^n-\sigma^{n-1}+\alpha (p^n-p^{n-1})I)-\Delta tA(\sigma_{t}(:,t_n)+\alpha p_{t}(:,t_n)),D_p^nI)\\
%&\;-\frac{1}{\Delta t}(A(C_\sigma^n+\alpha C_p^nI)-A(C_\sigma^{n-1}+\alpha C_p^{n-1}I),D_\sigma^n+\alpha D_p^nI)-\frac{1}{\Delta t}(C_\gamma^n-C_\gamma^{n-1},as(D_\sigma^n))
%+\frac{1}{\Delta t}(D_\gamma^n,as(C_\sigma^n-C_\gamma^{n-1})).
%\end{align*}
Using the inequality $(a-b,a)\geq \frac{|a|^2-|b|^2}{2}$ and the Cauchy-Schwarz inequality, we can obtain
\begin{equation}
\begin{split}
&\|K^{-\frac{1}{2}}D_z^n\|_0^2+\frac{c_0}{2\Delta t}\Big(\|D_p^n\|_0^2-\|D_p^{n-1}\|_0^2
+\|D_p^n-D_p^{n-1}\|_0^2\Big)\\
&\;+\frac{1}{2\Delta t}\Big(\|A^{\frac{1}{2}}(D_\sigma^n+\alpha D_p^n)\|_0^2-\|A^{\frac{1}{2}}(D_\sigma^{n-1}+\alpha D_p^{n-1}I)\|_0^2\Big)\\
&\leq C\Big(\|K^{-\frac{1}{2}}C_z^n\|_0\|K^{-\frac{1}{2}}D_z^n\|_0+\frac{c_0}{\Delta t}\|C_p^n-C_p^{n-1}\|\|D_p^n\|_0+\frac{1}{\Delta t}(D_\gamma^n-D_\gamma^{n-1},as(C_\sigma^n))\\
&\;+\frac{1}{\Delta t} \|A(C_\sigma^n+\alpha C_p^nI)-A(C_\sigma^{n-1}+\alpha C_p^{n-1}I)\|_0\Big(\|D_\sigma^n\|_0+\|D_p^n\|_0\Big)\\
&\;+\frac{1}{\Delta t}\|A(\sigma^n-\sigma^{n-1}+\alpha (p^n-p^{n-1})I)-\Delta tA(\sigma_{t}(:,t_n)+\alpha p_{t}(:,t_n)I)\|_0\|D_p^n\|_0\\
&\;+\frac{1}{\Delta t}\|C_\gamma^n-C_\gamma^{n-1}\|_0\|D_\sigma^n\|_0
+\frac{c_0}{\Delta t}\|p^n-p^{n-1}-\Delta tp_{t}(:,t_n)\|_0\|D_p^n\|_0\Big).
\end{split}
\label{eq:error}
\end{equation}
It follows from \eqref{eq:inf-sup-bh}, \eqref{error-full4} and the discrete Poincar\'{e}-Friedrichs inequality that
\begin{align}
\|D_p^n\|_0\leq C\|D_p^n\|_Z\leq C \sup_{\xi\in \Sigma_h}\frac{b_h^*(p_h^n-I_hp^n,\xi)}{\|\xi\|_0}\leq C \|K^{-1}(z^n-z_h^n)\|_0.\label{eq:Dpn}
\end{align}
Similarly, we can infer from \eqref{eq:inf-sup} and \eqref{error-full1} that
\begin{align}
\|D_u^n\|_h+\|D_\gamma^n\|_0\leq C ( \|A^{\frac{1}{2}}(\sigma^n-\sigma_h^{n}+\alpha (p^n-p_h^{n})I)\|_0+\|C_\gamma^n\|_0).\label{eq:uerror-full}
\end{align}
Besides, taking $(\psi,v,\eta)=(D_\sigma^n, D_u^n, D_\gamma^n)$ in \eqref{error-full1}-\eqref{error-full3} and summing up the resulting equations yield
\begin{align*}
(A(\sigma^n-\sigma_h^{n}+\alpha (p^n-p_h^{n})I),D_\sigma^n)+(C_\gamma^n,as(D_\sigma^n))-(as(C_\sigma^n),D_\gamma^n)=0,
\end{align*}
then an application of Young's inequality yields
\begin{equation}
\begin{split}
\|D_\sigma^n\|_0&\leq C\Big( \|C_p^n\|_0+\|D_p^n\|_0+\|C_\sigma^n\|_0+\|C_\gamma^n\|_0
+\epsilon_1\|D_\gamma^n\|_0+\frac{1}{\epsilon_1}\|C_\sigma^n\|_0\Big).
\end{split}
\end{equation}
Choosing $\epsilon_1$ small enough and combining with \eqref{eq:Dpn}, \eqref{eq:uerror-full} give
\begin{equation}
\begin{split}
\|D_\sigma^n\|_0&\leq C\Big( \|C_p^n\|_0+\|D_p^n\|_0+\|C_\sigma^n\|_0+\|C_\gamma^n\|_0\Big)\\
&\leq  C\Big(\|p^n-I_hp^n\|_0+ \|K^{-1}(z^n-z_h^n)\|_0+\|\Pi_h\sigma^n-\sigma^n\|_0+\|\gamma^n-\pi_h\gamma^n\|_0\Big).
\end{split}
\label{eq:errorsigma}
\end{equation}
%\begin{equation}
%\begin{split}
%\|\Pi_h \sigma^n-\sigma_h^n\|_0&\leq \|p^n-I_hp^n\|_0+\|p_h^n-I_hp^n\|_0+\|\Pi_h\sigma^n-\sigma^n\|_0+\|\gamma^n-\pi_h\gamma^n\|_0\\
%&\leq C \Big(\|p^n-I_hp^n\|_0+ \|K^{-1}(z^n-z_h^n)\|_0+\|\Pi_h\sigma^n-\sigma^n\|_0+\|\gamma^n-\pi_h\gamma^n\|_0\Big).
%\end{split}
%\label{eq:errorsigma}
%\end{equation}
%
On the other hand, we have from Taylor's expansion
\begin{align*}
p^n-p^{n-1}-\Delta tp_{t}(:,t_n)=-\int_{t_{n-1}}^{t_n}(t-t_{n-1})p_{tt}(:,t_n)\;ds,
\end{align*}
where the right hand side can be estimated by the Cauchy-Schwarz inequality
\begin{align*}
\|\frac{1}{\Delta t}\int_{t_{n-1}}^{t_n}(t-t_{n-1})p_{tt}(:,t_n)\;ds\|_0^2\leq \frac{\Delta t}{3}\int_{t_{n-1}}^{t_n}\|p_{tt}\|_0^2\;ds.
\end{align*}
Similarly, we have
\begin{align*}
\|\frac{1}{\Delta t}A(\sigma^n-\sigma^{n-1}-\Delta t \sigma_t(:,t_n))\|_0^2&\leq \frac{C\Delta t}{3}\int_{t_{n-1}}^{t_n}\|\sigma_{tt}\|_0^2\;ds,\\
\|\frac{1}{\Delta t}A((p^n-p^{n-1})I-\Delta t p_t(:,t_n)I)\|_0^2&\leq \frac{C\Delta t}{3}\int_{t_{n-1}}^{t_n}\|p_{tt}\|_0^2\;ds.
\end{align*}
Therefore, we can infer from \eqref{eq:error} and Young's inequalities that
\begin{equation}
\begin{split}
&\|K^{-\frac{1}{2}}D_z^n\|_0^2+\frac{c_0}{2\Delta t}\Big(\|D_p^n\|_0^2-\|D_p^{n-1}\|_0^2
+\|D_p^n-D_p^{n-1}\|_0^2\Big)+\frac{1}{2\Delta t}\Big(\|A^{\frac{1}{2}}(D_\sigma^n+\alpha D_p^nI)\|_0^2\\
&\;-\|A^{\frac{1}{2}}(D_\sigma^{n-1}+\alpha D_p^{n-1}I)\|_0^2\Big)\leq C\Big(\|K^{-\frac{1}{2}}C_z^n\|_0^2+\|C_\sigma^n\|_0^2+\|C_\gamma^n\|_0^2+\|C_p^n\|_0^2\\
&\;+\frac{1}{(\Delta t)^2}(\|C_p^n-C_p^{n-1}\|_0^2+\|A^{\frac{1}{2}}(C_\sigma^n-C_\sigma^{n-1})\|_0^2
+\|C_\gamma^n-C_\gamma^{n-1}\|_0^2)\\
&\;+\Delta t\int_{t_{n-1}}^{t_n}\Big(\|p_{tt}\|_0^2+\|\sigma_{tt}\|_0^2\Big)\;ds+\frac{1}{\Delta t}(D_\gamma^n-D_\gamma^{n-1},as(C_\sigma^n))\Big).
\end{split}
\end{equation}
Changing $n$ to $j$ in \eqref{eq:error} and summing over $j=1,\cdots,n$, we can obtain
\begin{equation}
\begin{split}
&2\Delta t\sum_{j=1}^n\|K^{-\frac{1}{2}}D_z^j\|_0^2+c_0\Big(\|D_p^n\|_0^2-\|D_p^{0}\|_0^2
+\sum_{j=1}^n\|D_p^n-D_p^{n-1}\|_0^2\Big)+\|A^{\frac{1}{2}}(D_\sigma^n+\alpha D_p^nI)\|_0^2\\
&\leq C\Big(\sum_{j=1}^n\Delta t(\|K^{-\frac{1}{2}}C_z^j\|_0^2+\|C_\sigma^j\|_0^2
+\|C_\gamma^j\|_0^2+\|C_p^j\|_0^2)
+\sum_{j=1}^n\frac{1}{\Delta t}\Big(\|C_p^j-C_p^{j-1}\|_0^2\\
&\;+\|C_\sigma^j-C_\sigma^{j-1}\|_0^2
+\|C_\gamma^{j}-C_\gamma^{j-1}\|_0^2\Big)+\Delta t^2\sum_{j=1}^n\int_{t_{j-1}}^{t_j}(\|p_{tt}\|_0^2+\|\sigma_{tt}\|_0^2)\;ds\\
&\;+\|A^{\frac{1}{2}}(D_\sigma^{0}+\alpha D_p^{0}I)\|_0^2
+\sum_{j=1}^{n}(D_\gamma^j-D_\gamma^{j-1},as(C_\sigma^j))\Big).
\end{split}
\end{equation}
It follows from the interpolation error estimates \eqref{eq:interpolation1}-\eqref{eq:interpolation2} that
\begin{align*}
\|C_z^j\|_0&\leq C h^{k+1} \|z^j\|_{H^{k+1}(\Omega)},\quad
\|C_\gamma^j\|_0\leq C h^{k+1}\|\gamma^j\|_{H^{k+1}(\Omega)},\\
\|C_\sigma^j\|_0&\leq C h^{k+1}\|\sigma^j\|_{H^{k+1}(\Omega)},\quad
\|C_p^j\|_0\leq C h^{k+1}\|p^j\|_{H^{k+1}(\Omega)}.
\end{align*}
The Cauchy-Schwarz inequality and the interpolation error estimates \eqref{eq:interpolation1} imply
\begin{equation}
\begin{split}
\frac{1}{\Delta t}\|C_p^j-C_p^{j-1}\|_0^2&=\frac{1}{\Delta t}\|(p^j-I_hp^j)-(p^{j-1}-I_h p^{j-1})\|_0^2=\frac{1}{\Delta t}\|\int_{t_{j-1}}^{t_j}(p_t-I_h p_t)\|_0^2\\
&\leq C h^{2(k+1)}\int_{t_{j-1}}^{t_j}\|p_t\|_{H^{k+1}(\Omega)}^2,\\
\frac{1}{\Delta t}\|C_\gamma^j-C_\gamma^{j-1}\|_0^2%&=\frac{1}{\Delta t}\|(\gamma^j-\pi_h\gamma^j)-(\gamma^{j-1}-\pi_h \gamma^{j-1})\|_0=\frac{1}{\Delta t}\|\int_{t_{j-1}}^{t_j}(\gamma_t-\pi_h \gamma_t)\|_0^2\\
&\leq C h^{2(k+1)}\int_{t_{j-1}}^{t_j}\|\gamma_t\|_{H^{k+1}(\Omega)}^2.
%\|C_\sigma^j-C_\sigma^{j-1}\|_0^2&=\|(\sigma^j-\Pi_h \sigma^j)-(\sigma^{j-1}-\Pi_h \sigma^{j-1})\|_0^2=\|\int_{t_{j-1}}^{t_j}(\sigma_t-\Pi_h\sigma_t)\|_0^2\\
%&\leq C \int_{t_{j-1}}^{t_j}h^{2(k+1)}\|\sigma_t\|_{k+1}^2\;ds.
\end{split}
\label{eq:Cpj}
\end{equation}
Discrete integration by parts in time yields
\begin{align*}
\sum_{j=1}^n(D_\gamma^j-D_\gamma^{j-1},as(C_\sigma^j))=D_\gamma^nC_\sigma^n-D_\gamma^0C_\sigma^0
-\sum_{j=1}^n(D_\gamma^{j-1},C_\sigma^j-C_\sigma^{j-1}),
\end{align*}
where the last term can be estimated by
\begin{align*}
\|C_\sigma^j-C_\sigma^{j-1}\|_0^2
\leq C \Delta t h^{2(k+1)}\int_{t_{j-1}}^{t_j}\|\sigma_t\|_{H^{k+1}(\Omega)}^2.
\end{align*}
Thus, the preceding arguments lead to
\begin{equation*}
\begin{split}
&\Delta t\sum_{j=1}^n\|K^{-\frac{1}{2}}D_z^j\|_0^2+c_0\Big(\|D_p^n\|_0^2-\|D_p^{0}\|_0^2
+\sum_{j=1}^n\|D_p^n-D_p^{n-1}\|_0^2\Big)+\|A^{\frac{1}{2}}(D_\sigma^n+\alpha D_p^nI)\|_0^2\\
&\leq C\Big(
h^{2(k+1)}(\|\gamma\|_{H^{1}(0,T;H^{k+1}(\Omega))}^2
+\|p\|_{H^{1}(0,T;H^{k+1}(\Omega))}^2+\|z\|_{L^{2}(0,T;H^{k+1}(\Omega))}
+\|\sigma\|_{H^{1}(0,T;H^{k+1}(\Omega))}^2\\
&\;+\|\sigma\|_{L^{\infty}(0,T;H^{k+1}(\Omega))}^2)+\Delta t^2\sum_{j=1}^n\int_{t_{j-1}}^{t_j}(\|p_{tt}\|_0^2+\|\sigma_{tt}\|_0^2)\;ds
+\|A^{\frac{1}{2}}(D_\sigma^{0}+\alpha D_p^{0}I)\|_0^2+\|C_\sigma^0\|_0^2+\|D_\gamma^0\|_0^2\Big).
\end{split}
\end{equation*}
Considering \eqref{error-full1}-\eqref{error-full4} at iterations $n$ and $n-1$, and taking the difference, setting $\psi=D_\sigma^n-D_\sigma^{n-1}, v=D_u^n-D_u^{n-1}, \eta=D_\gamma^n-D_\gamma^{n-1},\xi=D_z^n,w=D_p^n-D_p^{n-1}$ and summing up the resulting equations yield
\begin{align*}
&\frac{1}{\Delta t}\|A^{\frac{1}{2}}(D_\sigma^n+\alpha D_p^nI)-A^{\frac{1}{2}}(D_\sigma^{n-1}+\alpha D_p^{n-1}I)\|_0^2\\
&\;+(K^{-1}(D_z^n-D_z^{n-1}),
D_z^n)+\frac{c_0}{\Delta t}(p^n-p_h^n-(p^{n-1}-p_h^{n-1}),D_p^n-D_p^{n-1})\\
&=c_0(\frac{p^n-p^{n-1}}{\Delta t}-p_{t}(:,t_n),D_p^n-D_p^{n-1})-(K^{-1}(C_z^n-C_z^{n-1}),D_z^n)\\
&\;+\frac{1}{\Delta t}(A(\sigma^n-\sigma^{n-1}+\alpha (p^n-p^{n-1})I)-\Delta tA(\sigma_{t}(:,t_n)+\alpha p_{t}(:,t_n)I),\alpha(D_p^n-D_p^{n-1})I)\\
&\;-\frac{1}{\Delta t}(A(C_\sigma^n+\alpha C_p^nI)-A(C_\sigma^{n-1}+\alpha C_p^{n-1}I),D_\sigma^n+\alpha D_p^nI-(D_\sigma^{n-1}+\alpha D_p^{n-1}I))\\
&\;-\frac{1}{\Delta t}(C_\gamma^n-C_\gamma^{n-1}),as(D_\sigma^n-
D_\sigma^{n-1}))+\frac{1}{\Delta t}(as(C_\sigma^n-C_\sigma^{n-1}),D_\gamma^n
-D_\gamma^{n-1}).
\end{align*}
We can infer from inf-sup condition \eqref{eq:inf-sup-bh} and \eqref{eq:inf-sup} that
\begin{align*}
\|I_hp^n-p_h^n-(I_hp^{n-1}-p_h^{n-1})\|_0&\leq C\|K^{-1}(z^n-z_h^n-(z^{n-1}-z_h^{n-1}))\|_0,\\
\|\pi_h\gamma^n-\gamma_h^n-(\pi_h\gamma^{n-1}-\gamma_h^{n-1})\|_0&\leq C \|A^{\frac{1}{2}}(D_\sigma^n+\alpha D_p^nI)-A^{\frac{1}{2}}(D_\sigma^{n-1}+\alpha D_p^{n-1}I)\|_0.
\end{align*}
%
%\begin{align*}
%\|as(D_\sigma^n-D_\sigma^{n-1})\|_0=\|as(D_\sigma^n-D_\sigma^{n-1}-(D_p^n-D_p^{n-1})I)\|_0.
%\end{align*}
Therefore, we have
\begin{align*}
&\frac{1}{\Delta t}\|A^{\frac{1}{2}}(D_\sigma^n+\alpha D_p^nI)-A^{\frac{1}{2}}(D_\sigma^{n-1}+\alpha D_p^{n-1}I)\|_0^2+\frac{1}{2}\Big(\|K^{-\frac{1}{2}}D_z^n\|_0^2\\
&\;-\|K^{-\frac{1}{2}}D_z^{n-1}\|_0^2+
\|K^{-\frac{1}{2}}D_z^n-K^{-\frac{1}{2}}D_z^{n-1}\|_0^2\Big)+\frac{c_0}{\Delta t}\|D_p^n-D_p^{n-1}\|_0^2\\
&\leq C\Big( \frac{\Delta t}{3}\int_{t_{n-1}}^{t_n}(\|p_{tt}\|_0^2+\|\sigma_{tt}\|_0^2)\;ds+\frac{1}{\Delta t}\Big(\|C_\sigma^n-C_\sigma^{n-1}\|_0^2\\
&\;+\|C_p^n-C_p^{n-1}\|_0^2
+\|C_\gamma^n-C_\gamma^{n-1}\|_0^2+\|K^{-\frac{1}{2}}(C_z^n-C_z^{n-1})\|_0^2\Big)+\Delta t\|D_z^n\|_0^2\Big).
\end{align*}
Changing $n$ to $j$ and summing over $j=1,\cdots,n$, we can obtain
\begin{align*}
&\sum_{j=1}^n\frac{1}{\Delta t}\|A^{\frac{1}{2}}(D_\sigma^j+\alpha D_p^jI)-A^{\frac{1}{2}}(D_\sigma^{j-1}+\alpha D_p^{j-1}I)\|_0^2+\|K^{-\frac{1}{2}}D_z^n\|_0^2+\sum_{j=1}^n\frac{c_0}{\Delta t}\|D_p^j-D_p^{j-1}\|_0^2\\
&\leq C\Big(\sum_{j=1}^n\frac{1}{\Delta t}\Big(\|C_\sigma^j-C_\sigma^{j-1}\|_0^2+\|C_p^j-C_p^{j-1}\|_0^2
+\|C_\gamma^j-C_\gamma^{j-1}\|_0^2+\|K^{-\frac{1}{2}}(C_z^j-C_z^{j-1})\|_0^2\Big)\\
&+\|K^{-\frac{1}{2}}(I_hz^{0}-z_h^{0})\|_0^2+ \frac{\Delta t}{3}\int_{0}^{t_n}(\|p_{tt}\|_0^2+\|\sigma_{tt}\|_0^2)\;ds+\sum_{j=1}^n\Delta t\|K^{-\frac{1}{2}}D_z^j\|_0^2\Big).
\end{align*}
Proceeding analogously to \eqref{eq:Cpj}, we can get
\begin{align*}
%\frac{1}{\Delta t}\|C_p^j-C_p^{j-1}\|_0^2&=\frac{1}{\Delta t}\|(p^j-I_hp^j)-(p^{j-1}-I_h p^{j-1})\|_0=\frac{1}{\Delta t}\|\int_{t_{j-1}}^{t_j}(p_t-I_h p_t)\|_0^2\\
%&\leq C h^{2(k+1)}\|p_t\|_{L^{\infty}(0,T;H^{k+1}(\Omega))}^2,\\
%\frac{1}{\Delta t}\|C_\sigma^j-C_\sigma^{j-1}\|_0^2&=\frac{1}{\Delta t}\|\sigma^j-\Pi_h\sigma^j-(\sigma^j-\Pi_h\sigma^j)\|_0^2=\frac{1}{\Delta t}\|\int_{t_{j-1}}^{t_j}(\sigma_t-\Pi_h\sigma_t)\|_0^2\\
%&\leq C h^{2(k+1)}\|\sigma_t\|_{L^{\infty}(0,T;H^{k+1}(\Omega))}^2,\\
%\frac{1}{\Delta t}\|C_\gamma^j-C_\gamma^{j-1}\|_0^2&=\frac{1}{\Delta t}\|(\gamma^j-\pi_h\gamma^j)-(\gamma^{j-1}-\pi_h \gamma^{j-1})\|_0=\frac{1}{\Delta t}\|\int_{t_{j-1}}^{t_j}(\gamma_t-\pi_h \gamma_t)\|_0^2\\
%&\leq C h^{2(k+1)}\|\gamma_t\|_{L^{\infty}(0,T;H^{k+1}(\Omega))}^2,\\
\frac{1}{\Delta t}\|C_\sigma^j-C_\sigma^{j-1}\|_0^2&\leq C h^{2(k+1)}\int_{t_{j-1}}^{t_j}\|\sigma_t\|_{H^{k+1}(\Omega)}^2,\\
\frac{1}{\Delta t}\|C_z^j-C_z^{j-1}\|_0^2&\leq C h^{2(k+1)}\int_{t_{j-1}}^{t_j}\|K^{-\frac{1}{2}}z_t\|_{H^{k+1}(\Omega)}^2.
\end{align*}
Combining the above estimates with the interpolation error estimates \eqref{eq:interpolation1}-\eqref{eq:interpolation2} completes the proof.
%\begin{equation*}
%\begin{split}
%&\|K^{-\frac{1}{2}}D_z\|_{l^2(0,T;L^2(\Omega))}^2+\|D_\sigma\|_{l^2(0,T;L^2(\Omega))}^2
%+\|K^{-\frac{1}{2}}D_z\|_{l^2(0,T;L^2(\Omega))}^2+\|D_u\|_{l^2(0,T;L^2(\Omega))}^2\\
%&\;+\|D_\gamma\|_{l^2(0,T;L^2(\Omega))}^2
%+\|D_p\|_{l^\infty(0,T;L^2(\Omega))}^2+\|K^{-\frac{1}{2}}D_z\|_{l^\infty(0,T;L^2(\Omega))}^2
%+\|D_\sigma\|_{l^\infty(0,T;L^2(\Omega))}^2\\
%&\;+\|D_\gamma\|_{l^\infty(0,T;L^2(\Omega))}^2
%+\|D_u\|_{l^\infty(0,T;L^2(\Omega))}^2\leq C\Big(h^{2(k+1)}\Big(\|p\|_{L^2(0,T;H^{k+1}(\Omega))}^2
%+\|p_t\|_{L^2(0,T;H^{k+1}(\Omega))}^2\\
%&\;+\|K^{-\frac{1}{2}}z\|_{L^2(0,T;H^{k+1}(\Omega))}^2
%+\|\gamma\|_{L^2(0,T;H^{k+1}(\Omega))}^2+\|\sigma\|_{L^2(0,T;H^{k+1}(\Omega))}^2
%+\|\sigma_t\|_{L^2(0,T;H^{k+1}(\Omega))}^2\\
%&\;+\|p_t\|_{L^{\infty}(0,T;H^{k+1}(\Omega))}^2
%+\|\sigma_t\|_{L^{\infty}(0,T;H^{k+1}(\Omega))}^2
%+\|K^{-\frac{1}{2}}z_t\|_{L^{\infty}(0,T;H^{k+1}(\Omega))}^2
%+\|\gamma_t\|_{L^{\infty}(0,T;H^{k+1}(\Omega))}^2\\
%&\;+\|\sigma\|_{L^\infty(0,T;H^{k+1}(\Omega))}^2
%+\|p\|_{L^\infty(0,T;H^{k+1}(\Omega))}^2\Big)+\Delta t^2\Big(\|p\|_{H^2(0,T;L^2(\Omega))}^2+\|\sigma\|_{H^2(0,T;L^2(\Omega))}^2\Big)\Big),
%\end{split}
%\end{equation*}
%%
%which leads to the desired estimate by combining with the interpolation error estimates \eqref{eq:interpolation1}-\eqref{eq:interpolation2}.

\end{proof}

\section{Fixed stress splitting scheme}\label{sec:fixed}
The global system \eqref{eq:fully1}-\eqref{eq:fully5} consists of five variables which is relatively large. In order to reduce the computational costs, we propose the following fixed stress splitting scheme inspired by \cite{Mikelic13}. This includes two steps: First, the flow problem is solved independently. Second, the mechanics problem is solved using updated pressure and flux. For
fixed $n, i \in  \mathbb{N}$, the detailed splitting scheme reads as follows:
%
%\begin{align*}
%(A(\sigma_h^{n}+\alpha p_h^{n}I),\tau)-b_h^*(u_h^{n},\tau)+(\gamma_h^n,as(\tau))=0,\\
%b_h(\sigma_h^n,v)=(f,v),\\
%(as(\sigma_h^n), \eta)=0,\\
%(K^{-1}z_h^n,\xi)+b_h^*(p_h^n,\xi)=0,\\
%c_0(\frac{p_h^n-p_h^{n-1}}{\Delta t},w)+\alpha(\frac{A(\sigma_h^n+\alpha p_h^nI)-A(\sigma_h^{n-1}+\alpha p_h^{n-1}I)}{\Delta t},wI)-b_h(z_h^n,w)=(q^n,w)
%\end{align*}
%

\textbf{Step 1}: Given $(z_h^{n,i-1},p_h^{n,i-1},\sigma_h^{n,i-1})$, find $(z_h^{n,i},p_h^{n,i})$ such that
\begin{align}
&(K^{-1}z_h^{n,i},\xi)+b_h^*(p_h^{n,i},\xi)=0\quad \forall \xi\in \Sigma_h, \label{eq:split1}\\
&((c_0+\beta)\frac{p_h^{n,i}}{\Delta t},w)+\alpha(\frac{A(\alpha p_h^{n,i}I)}{\Delta t},wI)-b_h(z_h^{n,i},w)=(q^n,w)+(c_0\frac{p_h^{n-1}}{\Delta t},w) \nonumber\\
&+(\beta\frac{p_h^{n,i-1}}{\Delta t},w)\;-\alpha(\frac{A(\sigma_h^{n,i-1})-A(\sigma_h^{n-1}+\alpha p_h^{n-1}I)}{\Delta t},wI)\quad \forall w\in S_h.\label{eq:split2}
\end{align}

\textbf{Step 2}: Given $p_h^{n,i}$, find $(\sigma_h^{n,i}, u_h^{n,i},\gamma_h^{n,i})$ such that
\begin{align}
(A(\sigma_h^{n,i}),\psi)-B_h^*(u_h^{n,i},\psi)+(\gamma_h^{n,i},as(\psi))&=-(A(\alpha p_h^{n,i}I),\psi)\quad \forall \psi \in [\Sigma_h]^2,\label{eq:sigmas}\\
B_h(\sigma_h^{n,i},v)&=(f^n,v)\quad \forall v\in [S_h]^2,\label{eq:sigmas2}\\
(as(\sigma_h^{n,i}), \eta)&=0\quad \forall \eta\in M_h.\label{eq:sigmas3}
\end{align}
The initial guess for the iterations is chosen to be the solution at the last time step, i.e., $(\sigma_h^{n,0},p_h^{n,0})=(\sigma_h^{n-1},p_h^{n-1})$.

\begin{remark}
{\rm Since the mass matrix is block diagonal, we can apply local elimination for \eqref{eq:split1}-\eqref{eq:split2} and \eqref{eq:sigmas}-\eqref{eq:sigmas3}, respectively. In this way, we can get a reduced system which solely depends on the displacement and pressure.}

\end{remark}

\begin{theorem}(linear convergence of fixed stress splitting).
Let $(\sigma_h^n,u_h^n,\gamma_h^n,z_h^n,p_h^n)$ be the solution of \eqref{eq:fully1}-\eqref{eq:fully5} and let $(\sigma_h^{n,i}, u_h^{n,i},\gamma_h^{n,i},z_h^{n,i},p_h^{n,i})$ be the solution of \eqref{eq:split1}-\eqref{eq:sigmas3}. Then for all $\beta$ satisfying
\begin{align*}
\beta\geq \frac{\alpha^2}{2(\mu+\lambda)}.
\end{align*}
We have
\begin{align}
\|p_h^n-p_h^{n,i}\|_0^2\leq \frac{\frac{\beta}{2}}{c_0+\frac{\beta}{2}+\frac{\alpha^2}{\mu+\lambda}+\Delta tC_{p}^{-1} C_{inf}^{-1}K_1^{\frac{1}{2}}}
\|p_h^n-p_h^{n,i-1}\|_0^2,\quad \forall i\geq 1,\label{eq:phn-phni}
\end{align}
where $C_p$ is the Poincar\'{e} constant and $C_{inf}$ is the inf-sup constant in \eqref{eq:inf-sup-bh}.
\end{theorem}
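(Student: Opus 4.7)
The plan is to work with the splitting errors $e_\sigma^i := \sigma_h^n - \sigma_h^{n,i}$, and analogously $e_u^i, e_\gamma^i, e_z^i, e_p^i$, obtained by subtracting the splitting equations \eqref{eq:split1}--\eqref{eq:sigmas3} from the fully discrete scheme \eqref{eq:fully1}--\eqref{eq:fully5}. A direct subtraction gives the mechanics error system
\[
(A(e_\sigma^i + \alpha e_p^i I), \psi) - B_h^*(e_u^i,\psi) + (e_\gamma^i, as(\psi)) = 0,\quad B_h(e_\sigma^i,v)=0,\quad (as(e_\sigma^i),\eta)=0,
\]
together with the flux equation $(K^{-1}e_z^i,\xi)+b_h^*(e_p^i,\xi)=0$ and a pressure equation in which the stress lags by one iteration:
\[
c_0(e_p^i,w)+\beta(e_p^i-e_p^{i-1},w)+\alpha\bigl(A(e_\sigma^{i-1}+\alpha e_p^i I),wI\bigr)-\Delta t\,b_h(e_z^i,w)=0.
\]

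The key mechanical identity comes from testing the first line with $\psi=e_\sigma^i$ and using $B_h(e_\sigma^i,v)=0$ and $(as(e_\sigma^i),\eta)=0$: this yields $(A(e_\sigma^i+\alpha e_p^iI),e_\sigma^i)=0$, equivalently $\|A^{1/2}(e_\sigma^i+\alpha e_p^iI)\|_0^2 = \alpha(A(e_\sigma^i+\alpha e_p^iI),e_p^iI)$. Combined with the isotropic formula $A(qI)=\tfrac{q}{2(\mu+\lambda)}I$, which gives $\|A^{1/2}(qI)\|_0^2 = \tfrac{1}{\mu+\lambda}\|q\|_0^2$, a Cauchy--Schwarz argument applied to the mechanics equations differenced between iterations $i$ and $i-1$ yields the sharp bound
\[
\|A^{1/2}(e_\sigma^i-e_\sigma^{i-1})\|_0 \le \tfrac{\alpha}{\sqrt{\mu+\lambda}}\,\|e_p^i-e_p^{i-1}\|_0,
\]
which is the mechanism through which the fixed-stress coupling is controlled by the pressure update.

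Next I test the pressure error equation with $w=e_p^i$ and the flux equation with $\xi=e_z^i$, using the adjoint property to obtain $b_h(e_z^i,e_p^i) = -\|K^{-1/2}e_z^i\|_0^2$ and the identity $\beta(e_p^i-e_p^{i-1},e_p^i) = \tfrac{\beta}{2}(\|e_p^i\|_0^2-\|e_p^{i-1}\|_0^2+\|e_p^i-e_p^{i-1}\|_0^2)$. I then split the coupling term via $e_\sigma^{i-1}=e_\sigma^i-(e_\sigma^i-e_\sigma^{i-1})$ so that the $e_\sigma^i$ contribution is absorbed as the non-negative dissipation $\|A^{1/2}(e_\sigma^i+\alpha e_p^iI)\|_0^2$, and Young's inequality is applied to the residual $\alpha(A(e_\sigma^i-e_\sigma^{i-1}),e_p^iI)$ with the sharp bound above. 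The threshold $\beta\ge \alpha^2/(2(\mu+\lambda))$ is tuned so that the resulting negative contribution can be absorbed into the $\tfrac{\beta}{2}\|e_p^i-e_p^{i-1}\|_0^2$ term, leaving only the desired coefficient on $\|e_p^i\|_0^2$.

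Finally, to turn the $\Delta t\|K^{-1/2}e_z^i\|_0^2$ dissipation into a contribution to the coefficient of $\|e_p^i\|_0^2$, I invoke the flux inf-sup condition \eqref{eq:inf-sup-bh} together with the discrete Poincar\'e--Friedrichs inequality on the flux error equation to get $\|e_p^i\|_0 \le C_p C_{inf} K_1^{-1/2}\|K^{-1/2}e_z^i\|_0$, which supplies the $\Delta t\,C_p^{-1}C_{inf}^{-1}K_1^{1/2}$ contribution in the denominator of the claimed ratio. Collecting the estimates and dropping the non-negative $\|e_p^i-e_p^{i-1}\|_0^2$ and $\|A^{1/2}(e_\sigma^i+\alpha e_p^iI)\|_0^2$ terms on the left produces the stated contraction. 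The main obstacle is the delicate Young's inequality balance: the $\beta$ threshold must be just tight enough to keep the $\|e_p^i-e_p^{i-1}\|_0^2$ coefficient non-negative while simultaneously producing the explicit $\alpha^2/(\mu+\lambda)$ factor on the diagonal, and verifying that both requirements can be met simultaneously is where the precise isotropic structure of $A$ enters decisively.
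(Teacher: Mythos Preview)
Your overall strategy coincides with the paper's: you derive the same error system, obtain the same sharp mechanics bound $\|A^{1/2}(e_\sigma^i-e_\sigma^{i-1})\|_0\le \tfrac{\alpha}{\sqrt{\mu+\lambda}}\|e_p^i-e_p^{i-1}\|_0$, and use the same inf--sup/Poincar\'e argument to convert the $\Delta t\|K^{-1/2}e_z^i\|_0^2$ dissipation into a $\|e_p^i\|_0^2$ contribution. The only real difference is how the coupling term $\alpha(A(e_\sigma^{i-1}+\alpha e_p^iI),e_p^iI)$ is reorganized, and here your version loses sharpness.

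The paper tests the mechanics error equation with $\psi=e_\sigma^{i-1}$ and separates the coupling term as $\alpha^2(A(e_p^iI),e_p^iI)+(Ae_\sigma^{i-1},e_\sigma^i)$, then applies the polarization identity
\[
(Ae_\sigma^{i-1},e_\sigma^i)=\tfrac14\|A^{1/2}(e_\sigma^i+e_\sigma^{i-1})\|_0^2-\tfrac14\|A^{1/2}(e_\sigma^i-e_\sigma^{i-1})\|_0^2.
\]
The point is that this keeps the full $\alpha^2(A(e_p^iI),e_p^iI)=\tfrac{\alpha^2}{\mu+\lambda}\|e_p^i\|_0^2$ intact as a positive term on the left, and the \emph{only} negative contribution is $\tfrac14\|A^{1/2}(e_\sigma^i-e_\sigma^{i-1})\|_0^2\le \tfrac{\alpha^2}{4(\mu+\lambda)}\|e_p^i-e_p^{i-1}\|_0^2$, absorbed by $\tfrac{\beta}{2}\|e_p^i-e_p^{i-1}\|_0^2$ exactly when $\beta\ge\tfrac{\alpha^2}{2(\mu+\lambda)}$. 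That is precisely where both the stated threshold and the $\tfrac{\alpha^2}{\mu+\lambda}$ in the contraction denominator come from.

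Your splitting $e_\sigma^{i-1}=e_\sigma^i-(e_\sigma^i-e_\sigma^{i-1})$ instead produces $\|A^{1/2}(e_\sigma^i+\alpha e_p^iI)\|_0^2$ on the left (which you discard) and the cross term $\alpha(A(e_\sigma^i-e_\sigma^{i-1}),e_p^iI)$ on the right. Any Young inequality on this term is of the form $\tfrac{\alpha^2}{\mu+\lambda}\|e_p^i-e_p^{i-1}\|_0\|e_p^i\|_0\le \tfrac{s}{2}\|e_p^i-e_p^{i-1}\|_0^2+\tfrac{\alpha^4}{2s(\mu+\lambda)^2}\|e_p^i\|_0^2$; the second summand \emph{subtracts} from the $\|e_p^i\|_0^2$ coefficient rather than adding $\tfrac{\alpha^2}{\mu+\lambda}$ to it. At the claimed threshold $\beta=\tfrac{\alpha^2}{2(\mu+\lambda)}$ the best choice $s=\beta$ leaves the $\|e_p^i\|_0^2$ coefficient equal to $c_0+\tfrac{\beta}{2}-\tfrac{\alpha^2}{\mu+\lambda}$, which can be negative; you would need the larger threshold $\beta\ge \tfrac{\alpha^2}{\mu+\lambda}$ to get a contraction at all, and even then the constant does not contain the $\tfrac{\alpha^2}{\mu+\lambda}$ term. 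So your claim that Young's ``leaves only the desired coefficient on $\|e_p^i\|_0^2$'' is where the argument breaks: to recover the stated threshold and constant you must replace the Young step with the polarization identity, as in the paper.
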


\begin{proof}

Let $e_u^i=-u_h^{n,i}+u_h^n$, $e_\sigma^i=-\sigma_h^{n,i}+\sigma_h^n$, $e_\gamma^i=-\gamma_h^{n,i}+\gamma_h^n$, $e_p^i=-p_h^{n,i}+p_h^n$ and $e_z^i = -z_h^{n,i}+z_h^n$. Subtracting \eqref{eq:split1}-\eqref{eq:sigmas3} from \eqref{eq:fully1}-\eqref{eq:fully5}, we obtain
\begin{align*}
(A(e_\sigma^{i}+\alpha e_p^iI),\psi)-B_h^*(e_u^i,\psi)+(e_\gamma^i,as(\psi))&=0,\\
B_h(e_{\sigma}^i,v)&=0,\\
as(e_\sigma^i,\eta)&=0,\\
(K^{-1}e_z^i,\xi)+b_h^*(e_p^i,\xi)&=0,\\
\frac{c_0}{\Delta t}(e_p^i,w)-\frac{\beta}{\Delta t}(p_h^{n,i}-p_h^{n,i-1},w)+\frac{\alpha}{\Delta t}(A(e_\sigma^{i-1}+\alpha e_p^iI),wI)-b_h(e_z^i,w)&=0
\end{align*}
for $(\psi,v,\eta,\xi,w)\in [\Sigma_h]^2\times[S_h]^2\times M_h\times \Sigma_h\times S_h$.

Taking $\psi=e_\sigma^{i-1},v=e_u^{i},\eta=e_\gamma^i$, $\xi=e_z^i,w=e_p^i$ in the above equations, and summing up the resulting equations, we can infer that
%
%\begin{align}
%&(A(e_\sigma^{i}+\alpha e_p^iI),e_\sigma^{i-1})=0,\label{eq:sigma1}\\
%&c_0(e_p^i,e_p^i)-\beta(p_h^{n,i}-p_h^{n,i-1},e_p^i)+\alpha(A(e_\sigma^{i-1}+\alpha e_p^iI),e_p^iI)+\Delta t(K^{-1}e_z^i,e_z^i)=0.
%\end{align}
%\begin{align*}
%\alpha(A(e_\sigma^{i}+\alpha e_p^iI),e_\sigma^i)
%\end{align*}
%\begin{align*}
%c_0\|e_p^i\|_0^2+\beta (e_p^i-e_p^{i-1},e_p^i)+\alpha^2(A(e_p^i I), e_p^i I)+\alpha (A(e_\sigma^{i-1}),e_p^i I)+\Delta t \|K^{-1/2} e_z^i\|_0^2=0
%\end{align*}
%Combining these two equations yields
\begin{align}
c_0\|e_p^i\|_0^2+\beta (e_p^i-e_p^{i-1},e_p^i)+\alpha^2(A(e_p^i I), e_p^i I)+(A(e_\sigma^{i-1}),e_\sigma^i)+\Delta t \|K^{-\frac{1}{2}} e_z^i\|_0^2=0.\label{eq:ep}
\end{align}

Consider \eqref{eq:sigmas}-\eqref{eq:sigmas2} at iterations $i$ and $i-1$ and take the difference, setting $\psi=\sigma_h^{n,i}-\sigma_h^{n,i-1}$ and summing up the resulting equations yield
\begin{align*}
(A(\sigma_h^{n,i}-\sigma_h^{n,i-1}),\sigma_h^{n,i}-\sigma_h^{n,i-1})=-\alpha (A((p_h^{n,i}-p_h^{n,i-1})I),\sigma_h^{n,i}-\sigma_h^{n,i-1}),
\end{align*}
which leads to
\begin{align*}
\|A^{\frac{1}{2}}(\sigma_h^{n,i}-\sigma_h^{n,i-1})\|_0^2&\leq \alpha \|A^{\frac{1}{2}}((p_h^{n,i}-p_h^{n,i-1})I)\|_0
\|A^{\frac{1}{2}}(\sigma_h^{n,i}-\sigma_h^{n,i-1})\|_0\\
&\leq \Big(\frac{\alpha^2}{2}\|A^{\frac{1}{2}}((p_h^{n,i}-p_h^{n,i-1})I)\|_0^2
+\frac{1}{2}\|A^{\frac{1}{2}}(\sigma_h^{n,i}-\sigma_h^{n,i-1})\|_0^2\Big).
\end{align*}
Thus, we have
\begin{equation}
\begin{split}
\|A^{1/2}(e_\sigma^i-e_\sigma^{i-1})\|_0&\leq \alpha\|A^{1/2}((p_h^{n,i}-p_h^{n,i-1})I)\|_0\leq \alpha\|A^{1/2}((e_p^i-e_p^{i-1})I)\|_0\\
&\leq \frac{\alpha}{(\lambda+\mu)^{1/2}}\|e_p^i-e_p^{i-1}\|_0.
\end{split}
\label{eq:sigmape}
\end{equation}
We can infer from \eqref{eq:ep}, the equality $(a-b,a)=\frac{a^2-b^2+(a-b)^2}{2}$ and the identity $
(A(e_\sigma^{i-1}),e_\sigma^i)=\frac{1}{4}\Big(\|A^{\frac{1}{2}}(e_\sigma^i+e_\sigma^{i-1})\|_0^2
-\|A^{\frac{1}{2}}(e_\sigma^i-e_\sigma^{i-1})\|_0^2\Big)$ that
%
%\begin{align*}
%&c_0\|e_p^i\|_0^2+\beta (e_p^i-e_p^{i-1},e_p^i)+\alpha^2(A(e_p^i I), e_p^i I)\\
%&\;+\frac{1}{4}\Big(\|A^{\frac{1}{2}}(e_\sigma^i+e_\sigma^{i-1})\|_0^2
%-\|A^{\frac{1}{2}}(e_\sigma^i-e_\sigma^{i-1})\|_0^2\Big)+\Delta t \|K^{-1/2} e_z^i\|_0^2=0,
%\end{align*}
%which together with the equality $(a-b,a)=\frac{a^2-b^2+(a-b)^2}{2}$ implies
\begin{align*}
&c_0\|e_p^i\|_0^2+\frac{\beta}{2} (\|e_p^i\|_0^2-\|e_p^{i-1}\|_0^2+\|e_p^i-e_p^{i-1}\|_0^2)+\alpha^2(A(e_p^i I), e_p^i I)\\
&\;+\frac{1}{4}\Big(\|A^{\frac{1}{2}}(e_\sigma^i+e_\sigma^{i-1})\|_0^2
-\|A^{\frac{1}{2}}(e_\sigma^i-e_\sigma^{i-1})\|_0^2\Big)+\Delta t \|K^{-\frac{1}{2}} e_z^i\|_0^2=0.
\end{align*}
Therefore, an appeal to \eqref{eq:sigmape} yields
\begin{align*}
&c_0\|e_p^i\|_0^2+\frac{\beta}{2} (\|e_p^i\|_0^2-\|e_p^{i-1}\|_0^2)
+(\frac{\beta}{2}-\frac{\alpha^2}{4(\lambda+\mu)})\|e_p^i-e_p^{i-1}\|_0^2 +\alpha^2(A(e_p^i I), e_p^i I)\\
&\;+\frac{1}{4}\|A^{\frac{1}{2}}(e_\sigma^i+e_\sigma^{i-1})\|_0^2
+\Delta t \|K^{-\frac{1}{2}} e_z^i\|_0^2\leq 0.
\end{align*}
It follows from the inf-sup condition \eqref{eq:inf-sup-bh} and
the discrete Poincar\'{e}-Friedrichs inequality that
\begin{align*}
\|e_p^i\|_0\leq C_{p} C_{inf} \|K^{-1}e_z^i\|_0,
\end{align*}
where $C_p$ is the Poincar\'{e} constant and $C_{inf}$ is the
inf-sup constant in \eqref{eq:inf-sup-bh}.
Thereby, if we require %that $(\frac{\beta}{2}-\frac{\alpha^2}{4(\lambda+\mu)} ) \|e_p^i-e_p^{i-1}\|_0^2\geq 0$, i.e.,
\begin{align*}
\beta\geq \frac{\alpha^2}{2(\lambda+\mu)},
\end{align*}
then there holds
\begin{equation}
\begin{split}
&c_0\|e_p^i\|_0^2+\frac{\beta}{2} (\|e_p^i\|_0^2-\|e_p^{i-1}\|_0^2)+\alpha^2(A(e_p^i I), e_p^i I)\\
&\;+\Delta tC_{p}^{-1}
C_{inf}^{-1}K_1^{\frac{1}{2}}\|e_p^i\|_0^2+\frac{1}{4}\|A^{\frac{1}{2}}(e_\sigma^i+e_\sigma^{i-1})\|_0^2
\leq 0,
\end{split}
\label{eq:final}
\end{equation}
which leads to the desired estimate \eqref{eq:phn-phni}.

 %Thereby,
%if we require that
%$(\frac{\beta}{2}-\frac{\alpha^2}{4(\lambda+\mu)} )
%\|e_p^i-e_p^{i-1}\|_0^2\geq 0$, i.e.,
%\begin{align*}
%\beta\geq \frac{\alpha^2}{2(\lambda+\mu)},
%\end{align*}
%then there holds
%\begin{equation}
%\begin{split}
%&c_0\|e_p^i\|_0^2+\frac{\beta}{2} (\|e_p^i\|_0^2-\|e_p^{i-1}\|_0^2)+\alpha^2(A(e_p^i I), e_p^i I)\\
%&\;+\Delta tC_{p}^{-1} C_{inf}^{-1}K_1^{\frac{1}{2}}\|e_p^i\|_0^2+\frac{1}{4}\|A^{\frac{1}{2}}(e_\sigma^i+e_\sigma^{i-1})\|_0^2
%\leq 0,
%\end{split}
%\label{eq:final}
%\end{equation}
%%It follows from the inf-sup condition \eqref{eq:inf-sup-bh}, \eqref{eq:ez} and the discrete Poincar\'{e}-Friedrichs inequality that
%%\begin{align*}
%%\|e_p^i\|_0\leq C_{p} C_{inf} \|K^{-1}e_z^i\|_0,
%%\end{align*}
%where $C_p$ is the Poincar\'{e} constant and $C_{inf}$ is the inf-sup constant in \eqref{eq:inf-sup-bh}.
%
%Discarding the last term in \eqref{eq:final} leads to the desired estimate \eqref{eq:phn-phni}. Hence, the proof is completed.
%%, we have
%%\begin{align*}
%%(c_0+\frac{\beta}{2}+\frac{\alpha^2}{\mu+\lambda}+\Delta tC_{p}^{-1} C_{inf}^{-1}K_1^{\frac{1}{2}})\|e_p^i\|_0^2\leq \frac{\beta}{2} \|e_p^{i-1}\|_0^2.
%%\end{align*}

\end{proof}

\section{Numerical experiments}\label{sec:numerical}
In this section we present several numerical experiments to verify the theoretical convergence rates and illustrate the behavior of the proposed method. We also present one benchmark example showing the locking-free property of the method. In the simulation presented below, we use the polynomial order $k=1$.

\subsection{Smooth solution test}\label{ex1}
In this example, we test the convergence of our method. To this end, we set $\Omega=(0,1)^2$, where the displacement and pressure are given by
\begin{align*}
u=
\left(
  \begin{array}{c}
   \sin(2\pi t) \sin(2\pi x)\sin(2\pi y)\\
    x\cos(t) \\
  \end{array}
\right),\quad p = \mbox{exp}(t)(\sin(\pi x)\cos(\pi y)+10).
\end{align*}
The corresponding $f$ and $q$ can be calculated by \eqref{eq:constitutive}-\eqref{eq:model}. We set $\alpha=1,\lambda=1,\mu=1$ and the simulation time is $T=0.01$ with time steps $\Delta t=h^2$. We remark that $\Delta t$ should be chosen small enough so that the time discretization error will not influence the convergence rates. For this example, we consider two values of $K$, i.e., $K=I$ and
\begin{align}
K=\left(
    \begin{array}{cc}
      \frac{1}{50} & 0 \\
      0 & 1 \\
    \end{array}
  \right)\label{eq:K}
\end{align}

We exploit square meshes and trapezoidal meshes for this example, see Figure~\ref{ex1-mesh} for an illustration. The convergence history against the number of degrees of freedom for rectangular meshes and trapezoidal meshes with various values of $c_0$ is depicted in Figure~\ref{ex1-con}. Optimal convergence rates matching the theoretical results can be obtained for various values of $c_0$. In addition, with different values of $c_0$, the accuracy for all the errors remain almost the same and the method is still valid for $c_0=0$. Further, the accuracy for $L^2$ errors of $\sigma,u,\gamma,z,p$ on rectangular meshes and trapezoidal meshes is slightly different. Figure~\ref{ex1-con-aniso} shows the convergence history on rectangular meshes for anisotropic $K$ defined in \eqref{eq:K} and similar performances can be observed, which illustrates the robustness of our method to the anisotropy ratio.

\begin{figure}[t]
\centering
\includegraphics[width=0.35\textwidth]{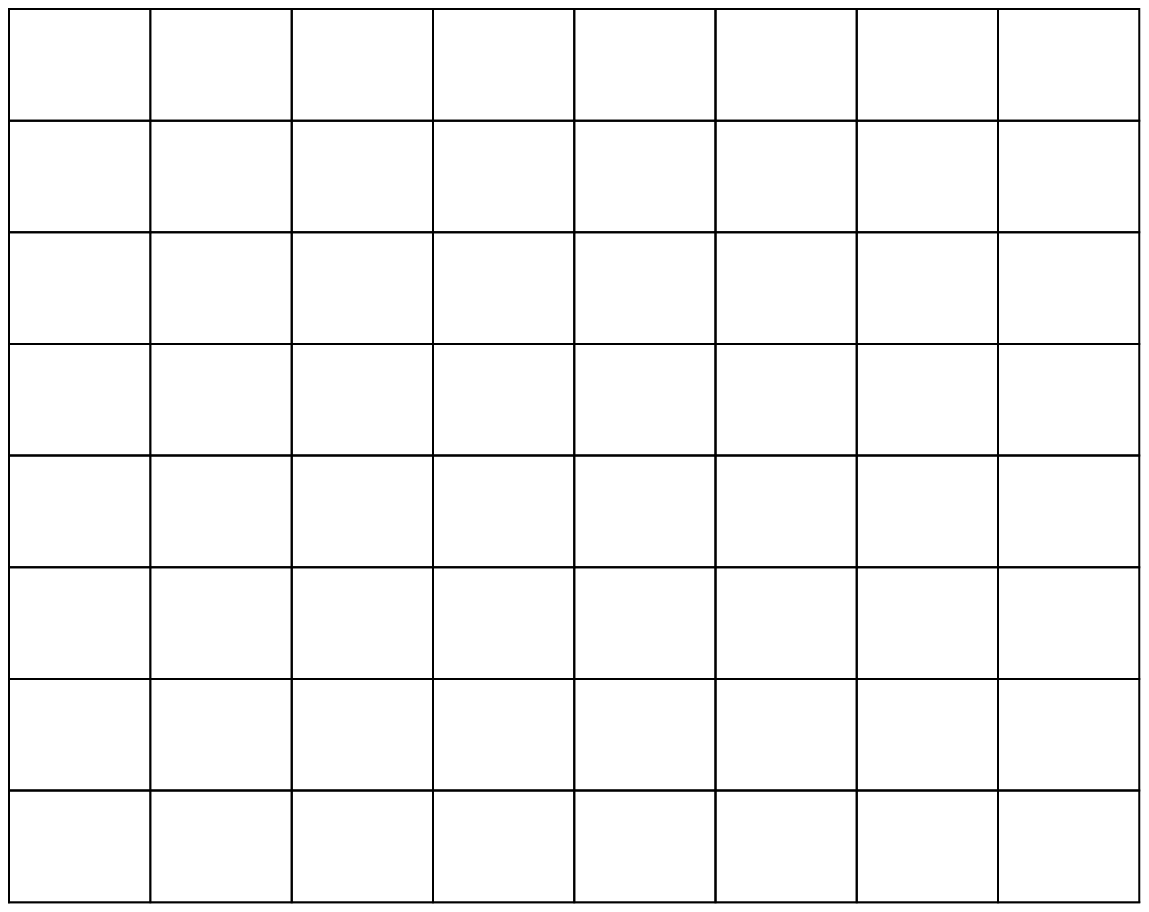}
\includegraphics[width=0.35\textwidth]{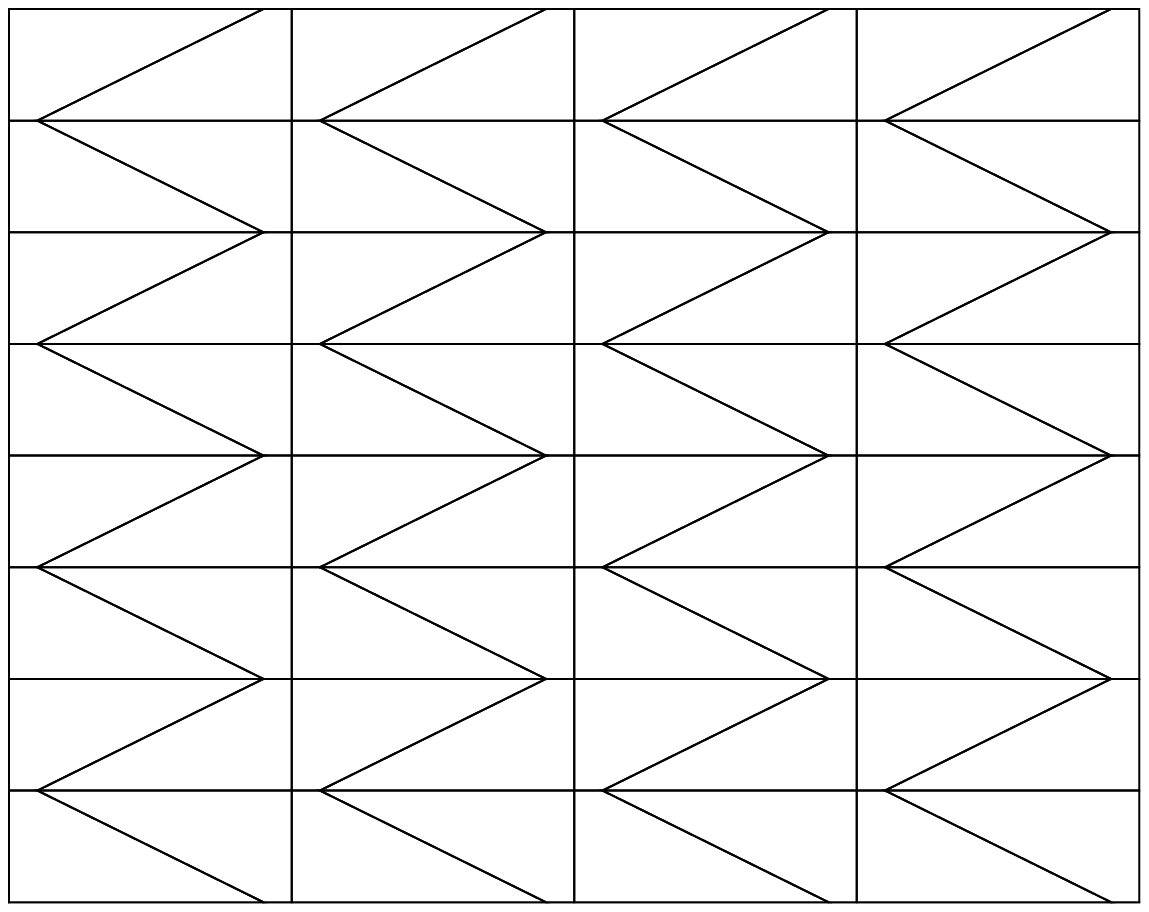}
\caption{Example~\ref{ex1}. Square mesh (left) and trapezoidal mesh (right).}
\label{ex1-mesh}
\end{figure}

\begin{figure}[t]
\centering
\includegraphics[width=0.32\textwidth]{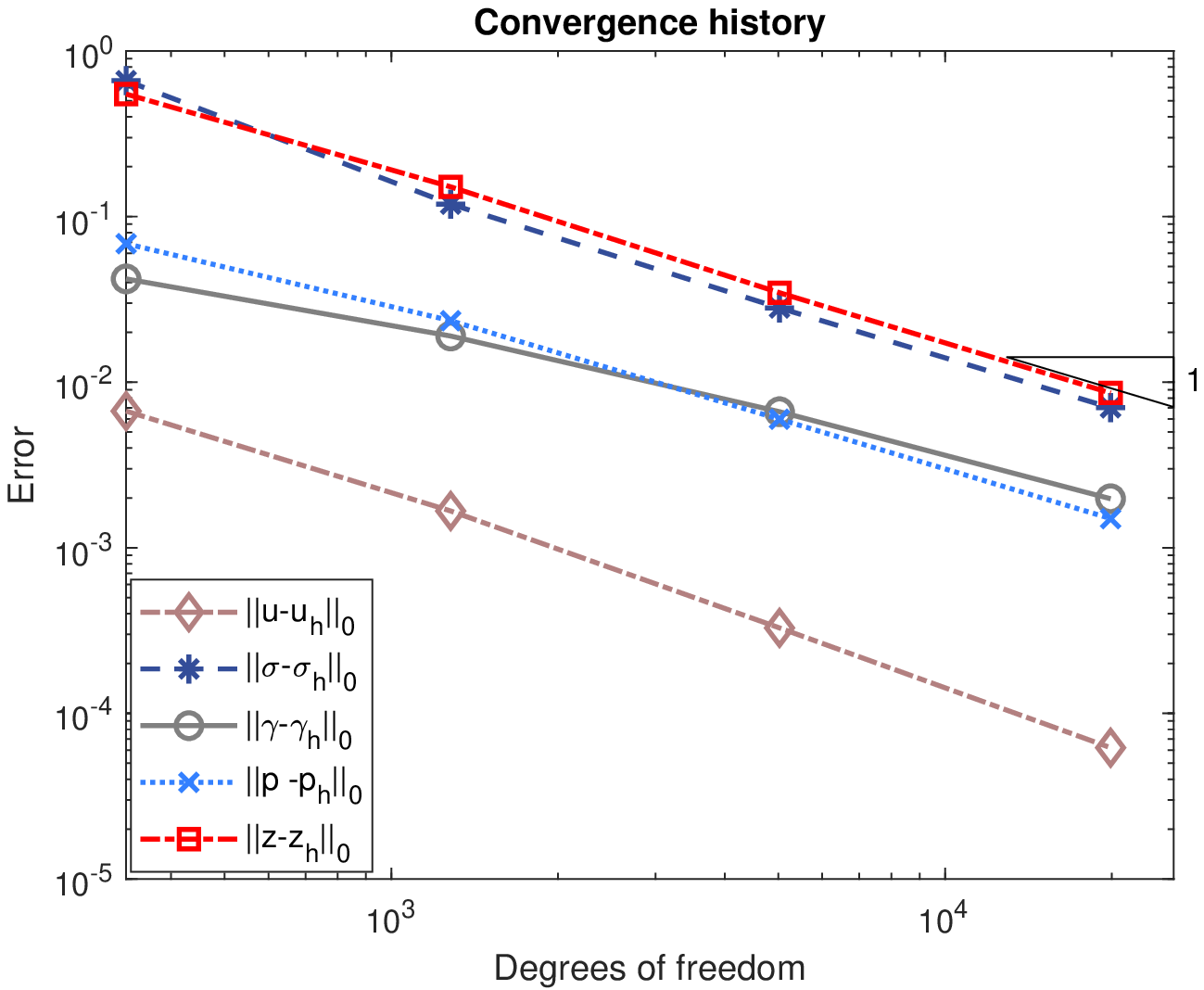}
\includegraphics[width=0.32\textwidth]{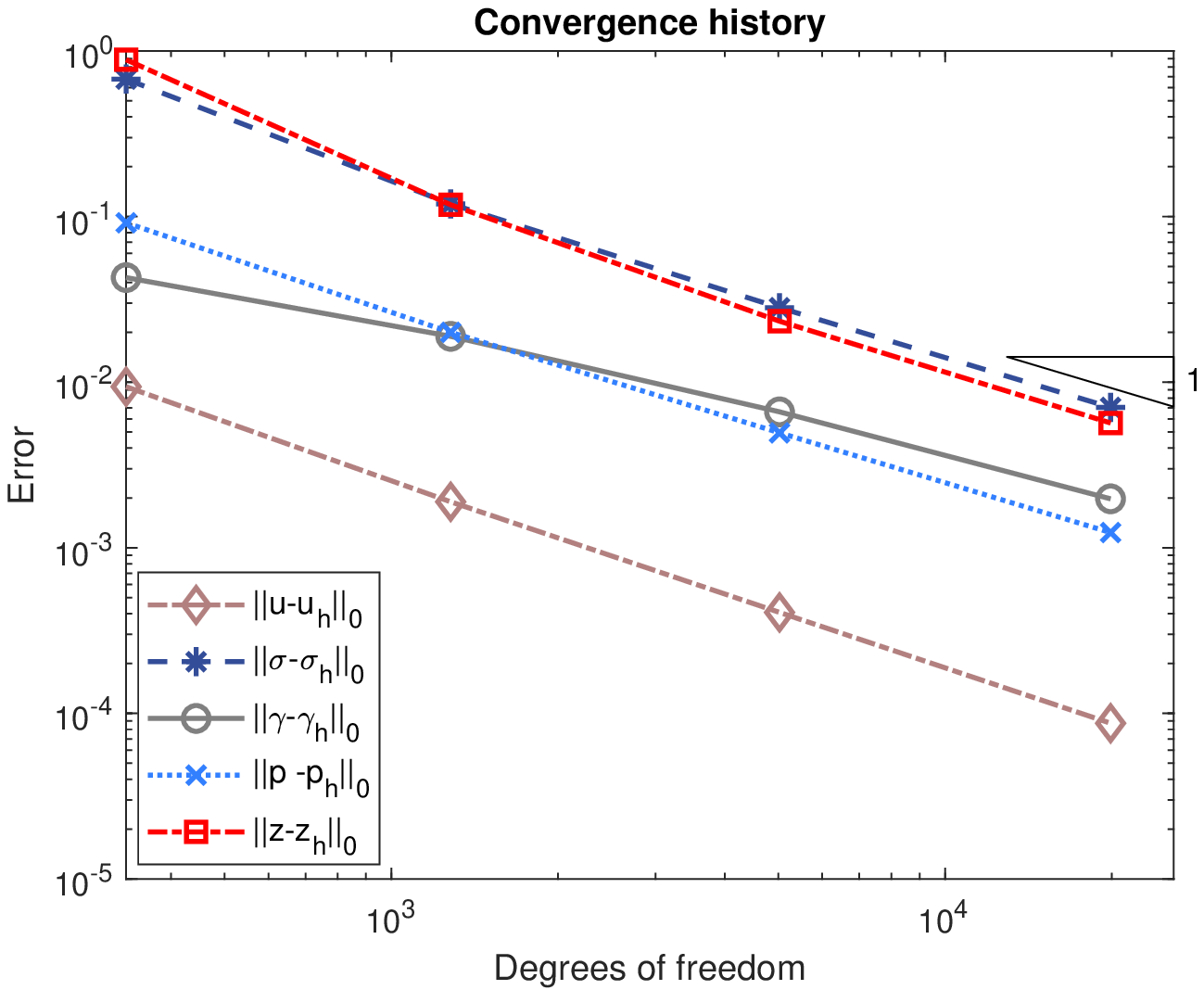}
\includegraphics[width=0.32\textwidth]{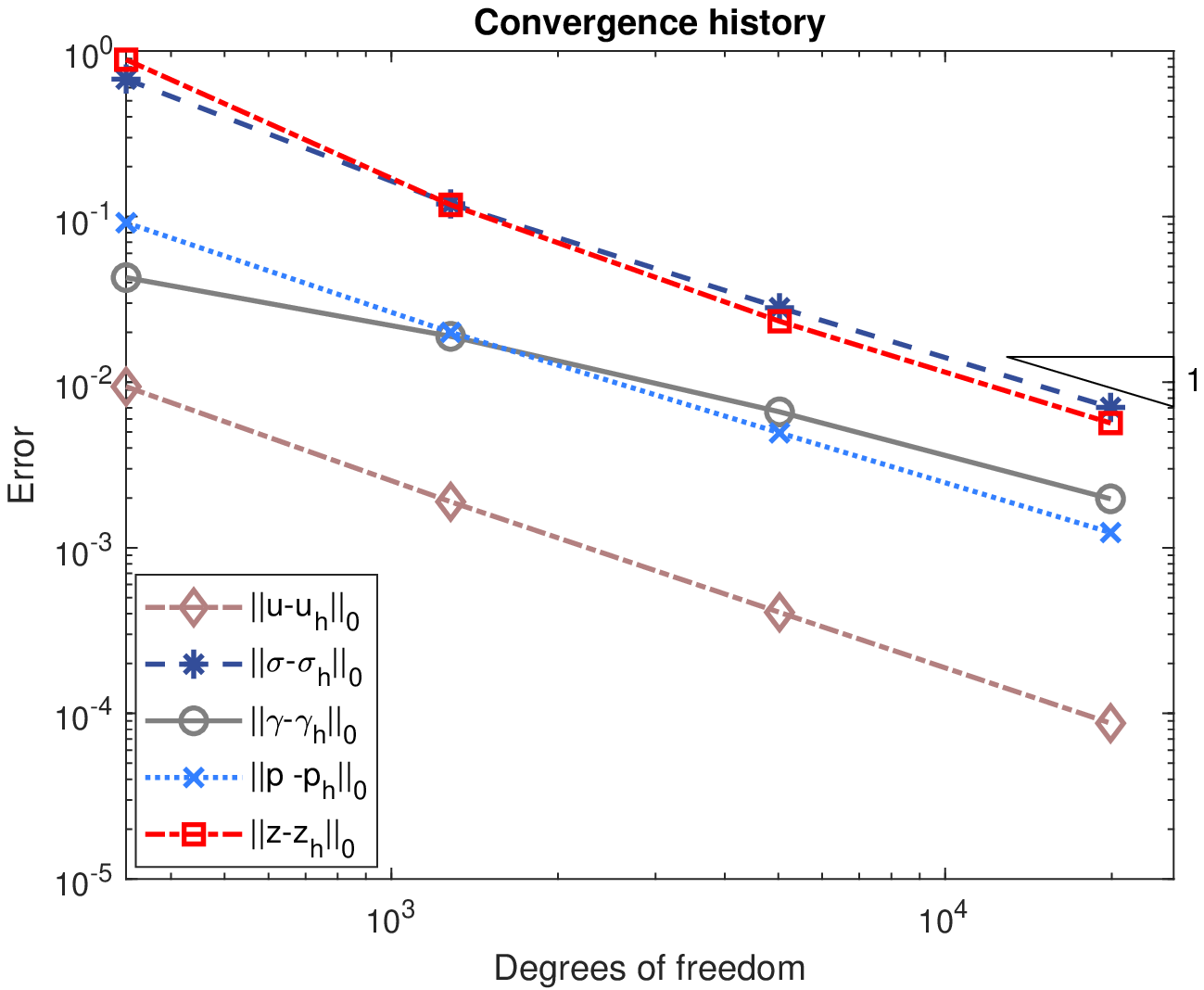}
\includegraphics[width=0.32\textwidth]{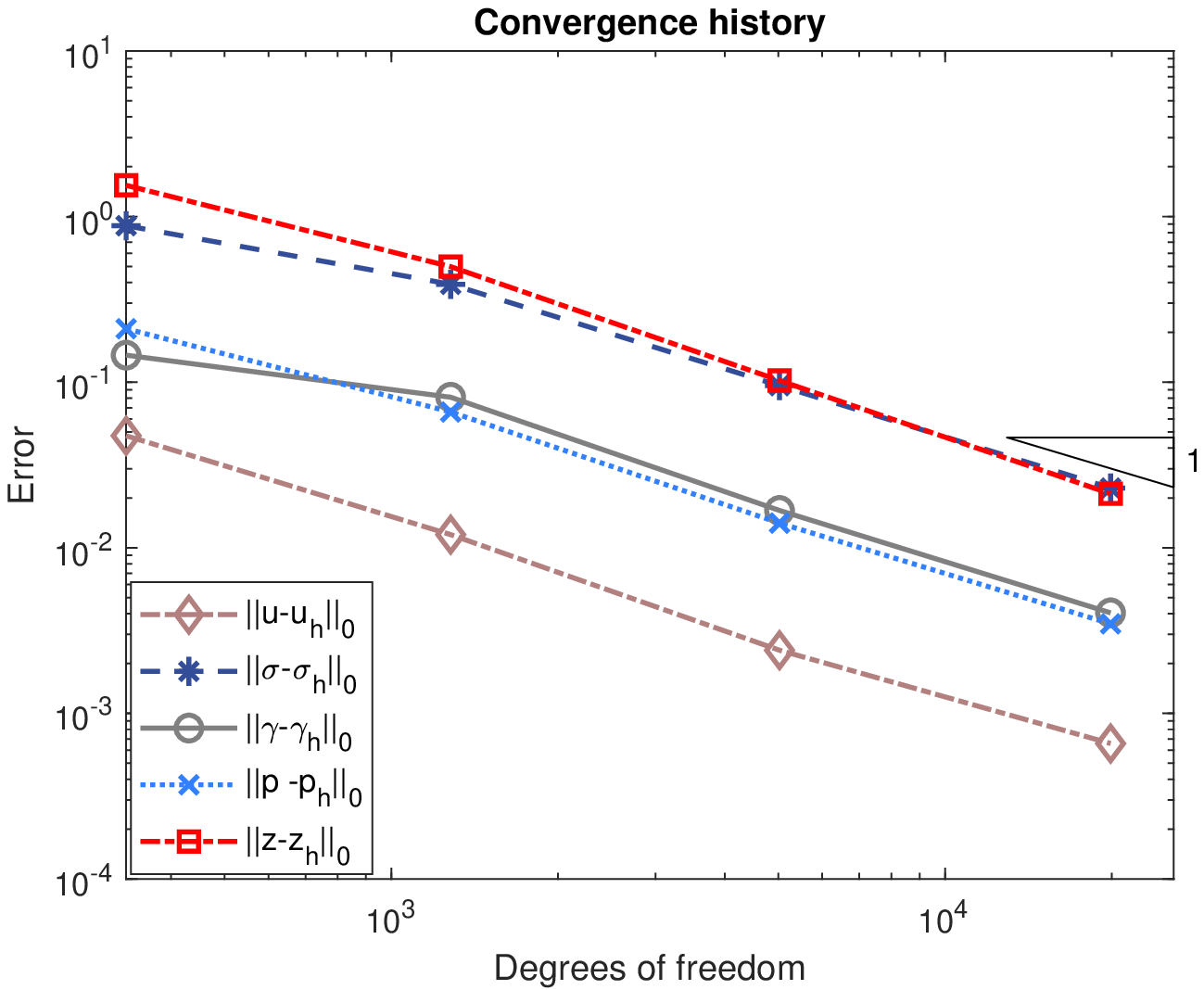}
\includegraphics[width=0.32\textwidth]{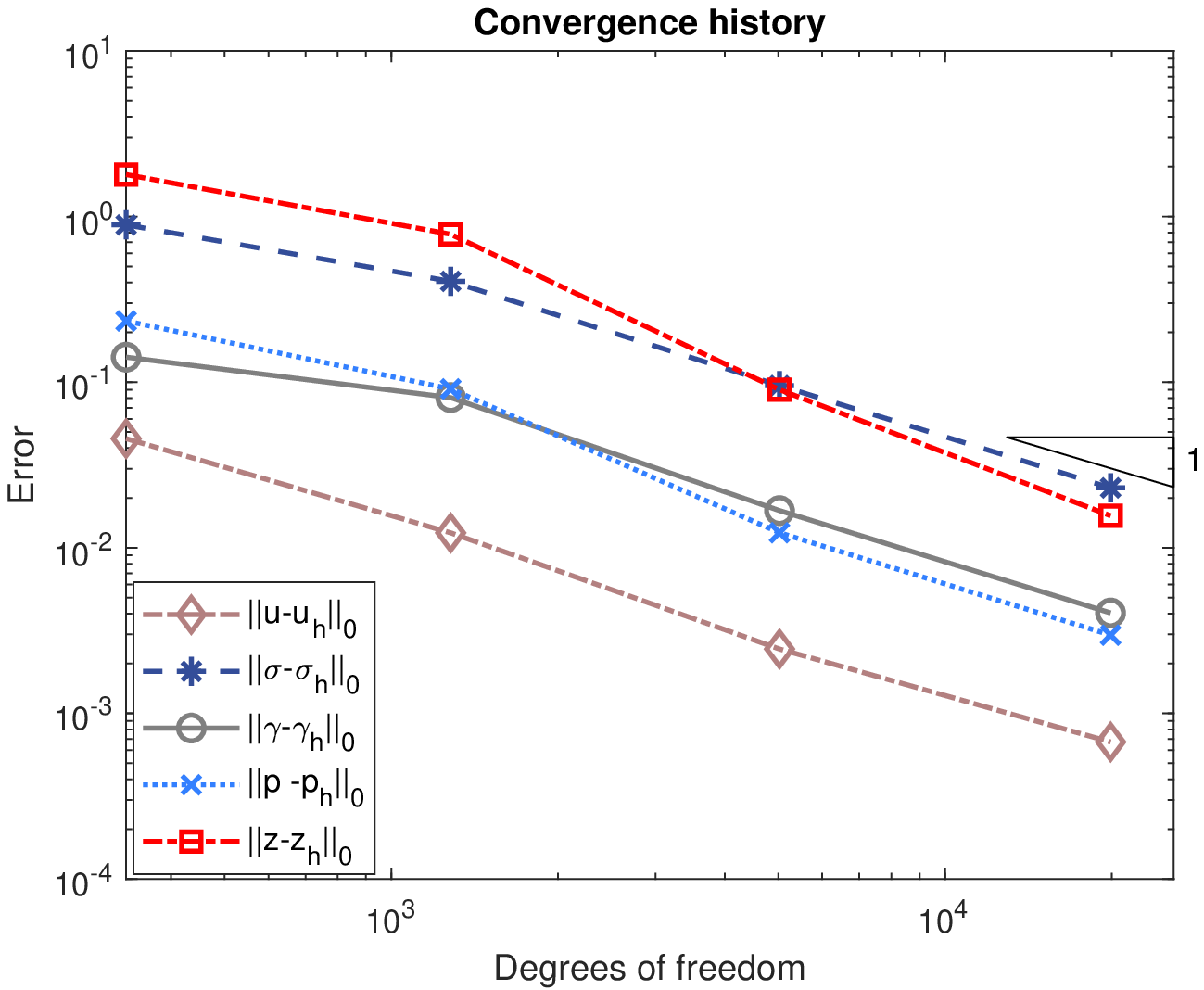}
\includegraphics[width=0.32\textwidth]{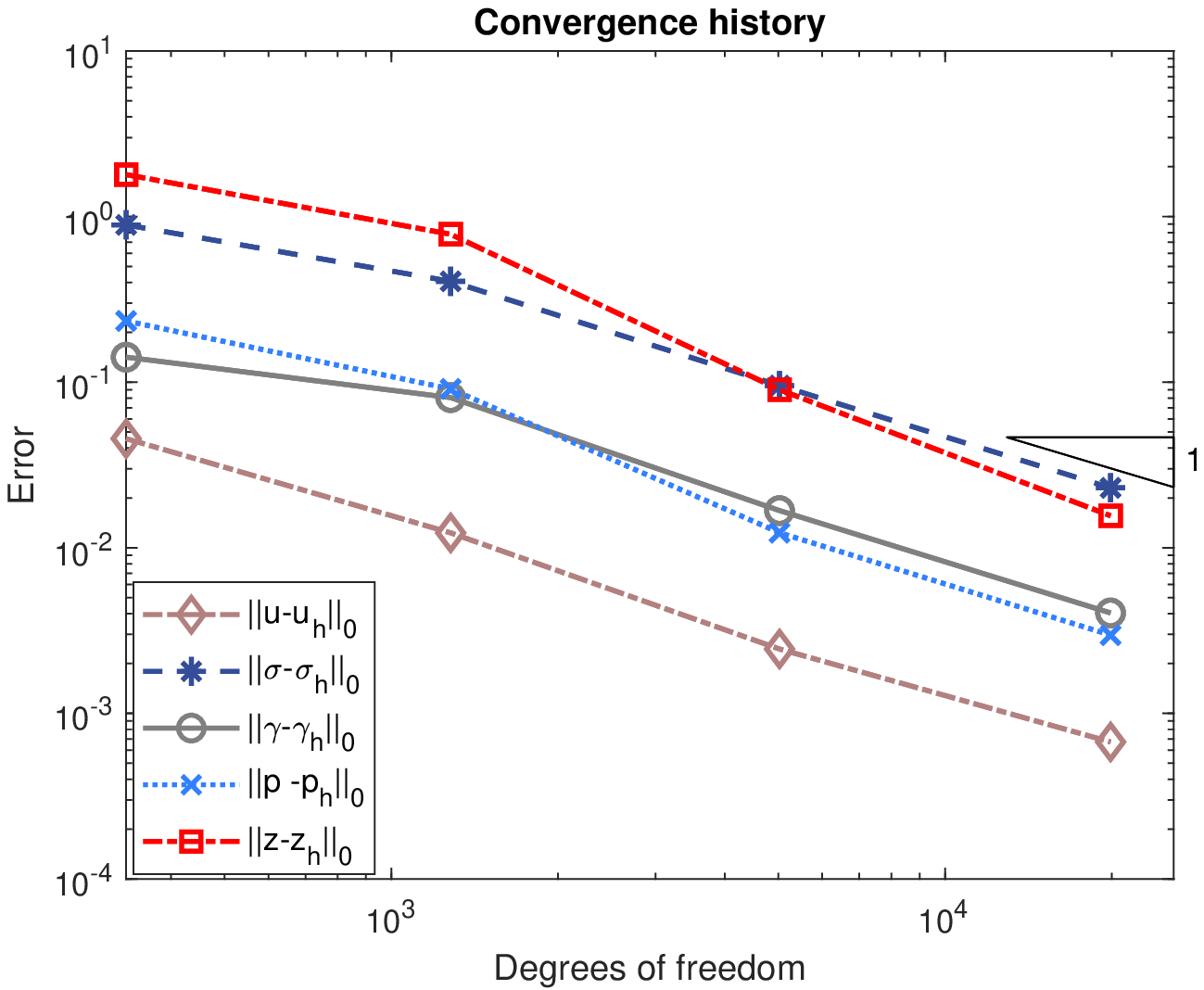}
\caption{Example~\ref{ex1}. Top plots: convergence history on square meshes for $c_0=1$ (left), $c_0=10^{-3}$ (middle) and $c_0=0$ (right) for $K=1$. Bottom plots: convergence history on trapezoidal meshes for $c_0=1$ (left), $c_0=10^{-3}$ (middle) and $c_0=0$ (right) for $K=1$.}
\label{ex1-con}
\end{figure}

\begin{figure}[t]
\centering
\includegraphics[width=0.32\textwidth]{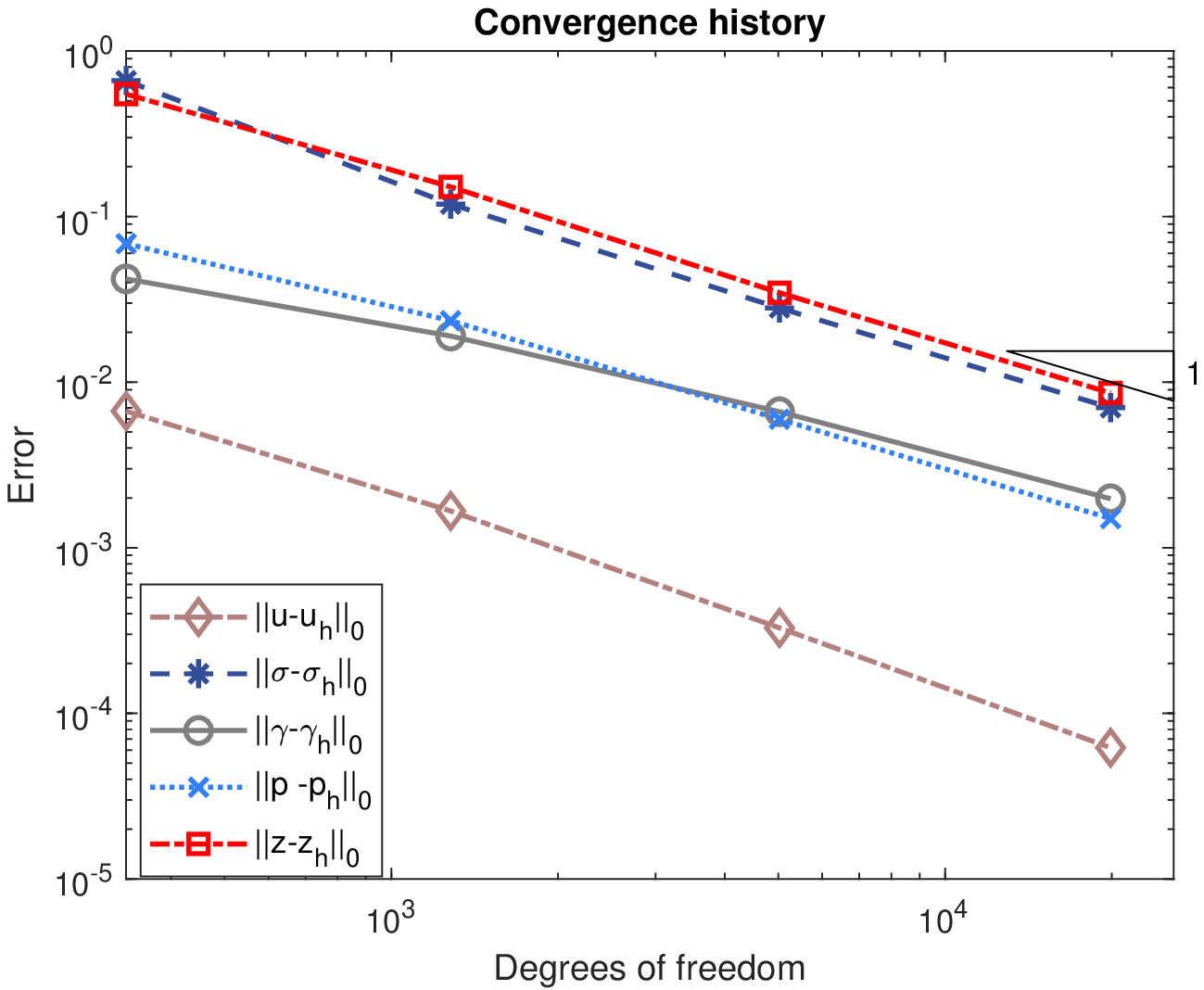}
\includegraphics[width=0.32\textwidth]{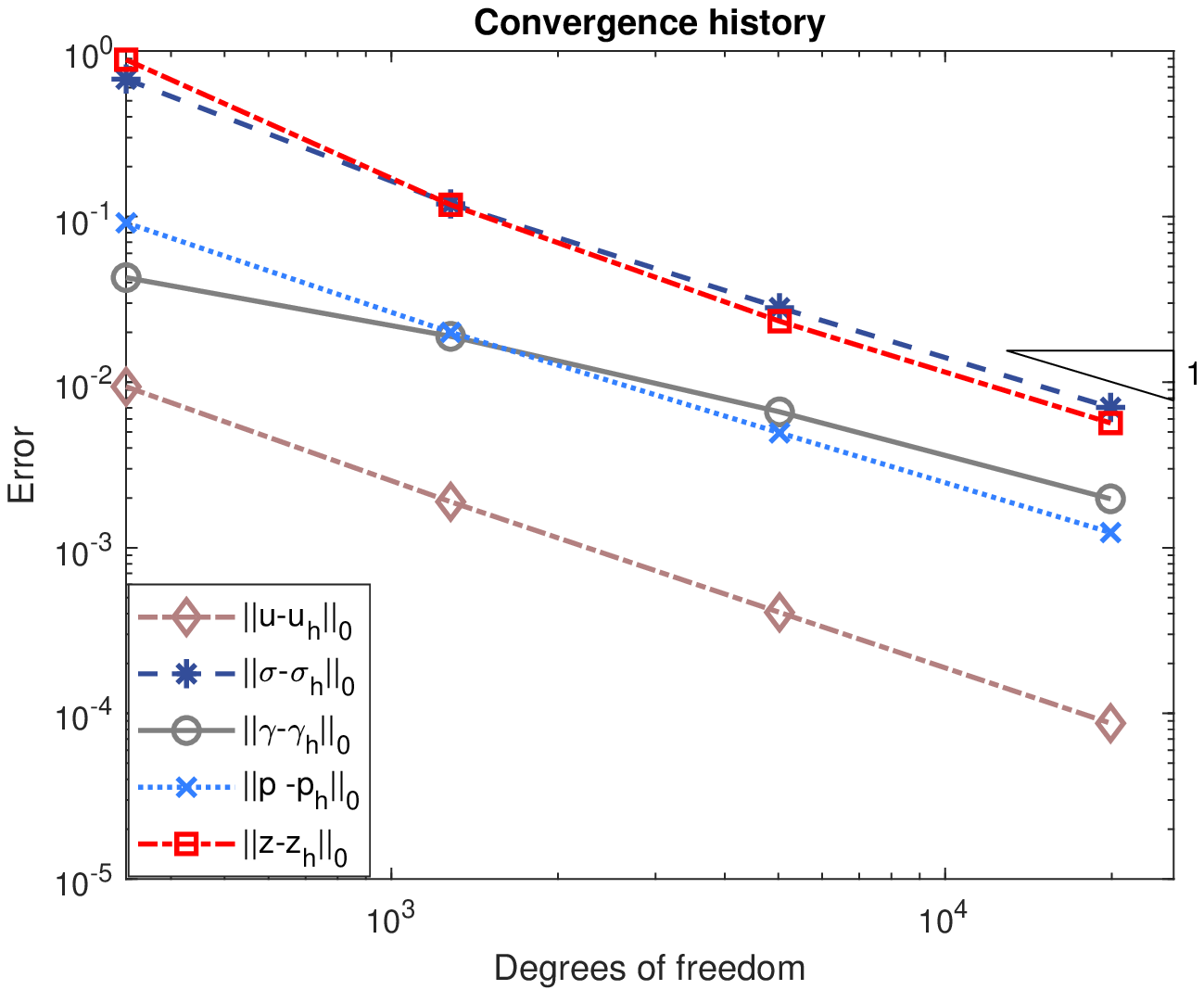}
\includegraphics[width=0.32\textwidth]{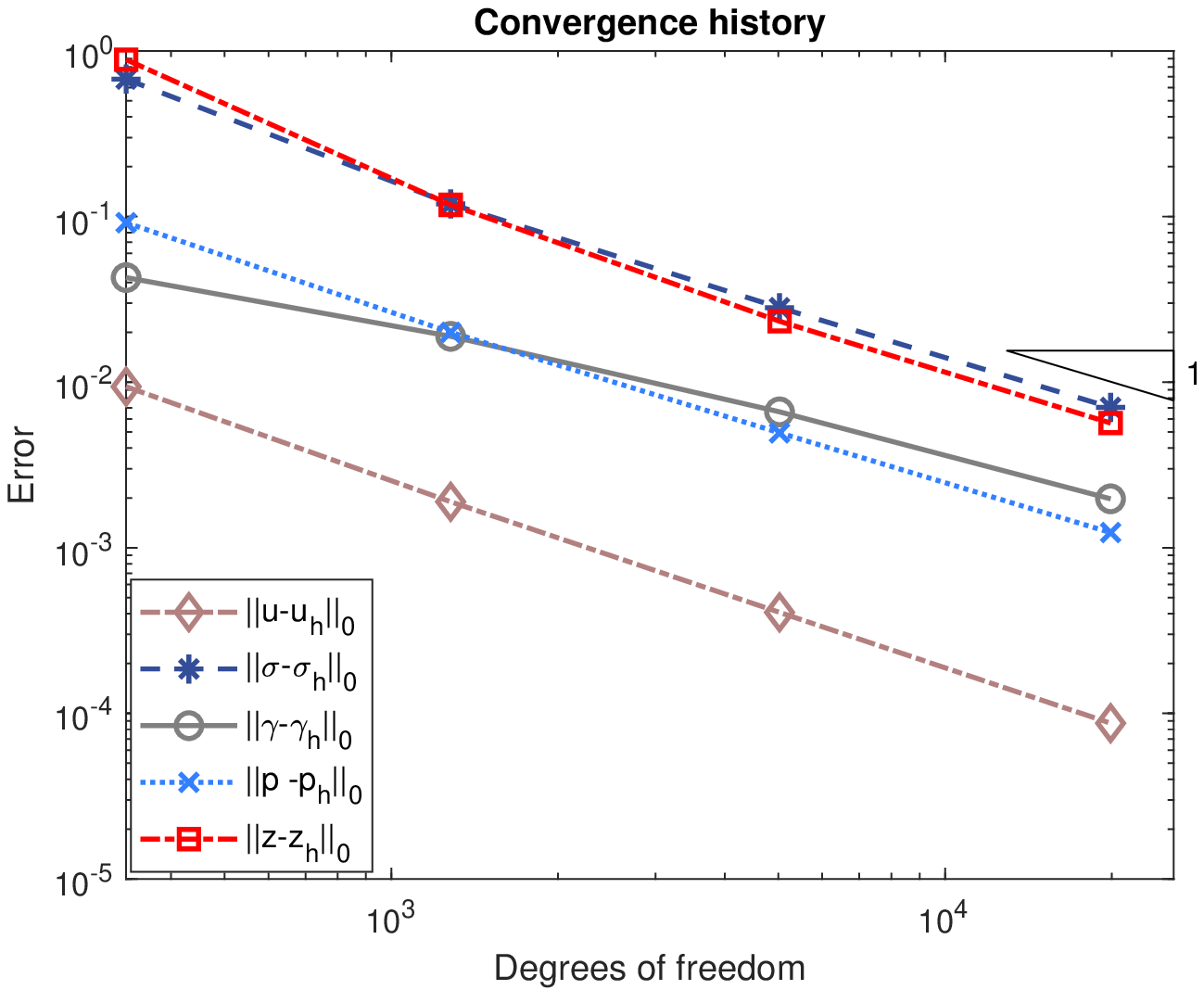}
\caption{Example~\ref{ex1}. Convergence history on square meshes for $c_0=1$ (left), $c_0=10^{-3}$ (middle) and $c_0=0$ (right) for anisotropic $K$.}
\label{ex1-con-aniso}
\end{figure}

\subsection{Cantilever bracket benchmark problem}\label{ex2}

The computational domain is the unit square. We impose no-flow boundary condition along all sides. The deformation is fixed at the left edge, and a downward traction is applied along the top. The bottom and right sides are traction-free, see Figure~\ref{ex2-boundary} for an illustration of the boundary condition. The time step for this example is $\Delta t =0.001$.

We use the same physical parameters as in \cite{Phillips09} as they typically induce locking:
\begin{align*}
E=10^{4},\quad\nu=0.4,\quad\alpha=0.93,\quad c_0=0,\quad K=10^{-6},
\end{align*}
where $E$ is the Young's modulus and $\nu$ is the Poisson ratio, and the Lam\'{e} parameters are computed by
\begin{align*}
\lambda = \frac{E \nu}{(1+\nu)(1-2\nu)}, \quad \mu = \frac{E}{2(1+\nu)}.
\end{align*}
Figure~\ref{ex2-pressure} shows that our proposed method yields a smooth pressure field, in contrast to the non-physical checkboard pattern that one obtains with continuous elasticity elements at the early time steps, see \cite{Phillips09}. In addition, Figure~\ref{ex2-pressure} also indicates that the pressure solution along different $x$-lines at time $T=0.005$. We can observe that it is free of oscillations and our solution agrees with the one obtained by DG-mixed discretizations \cite{Phillips09}.

\begin{figure}[t]
\centering
\includegraphics[width=0.5\textwidth]{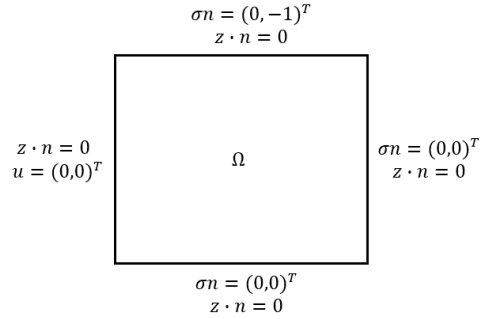}
\caption{Example~\ref{ex2}. Profile of boundary condition.}
\label{ex2-boundary}
\end{figure}

\begin{figure}[t]
\centering
\includegraphics[width=0.35\textwidth]{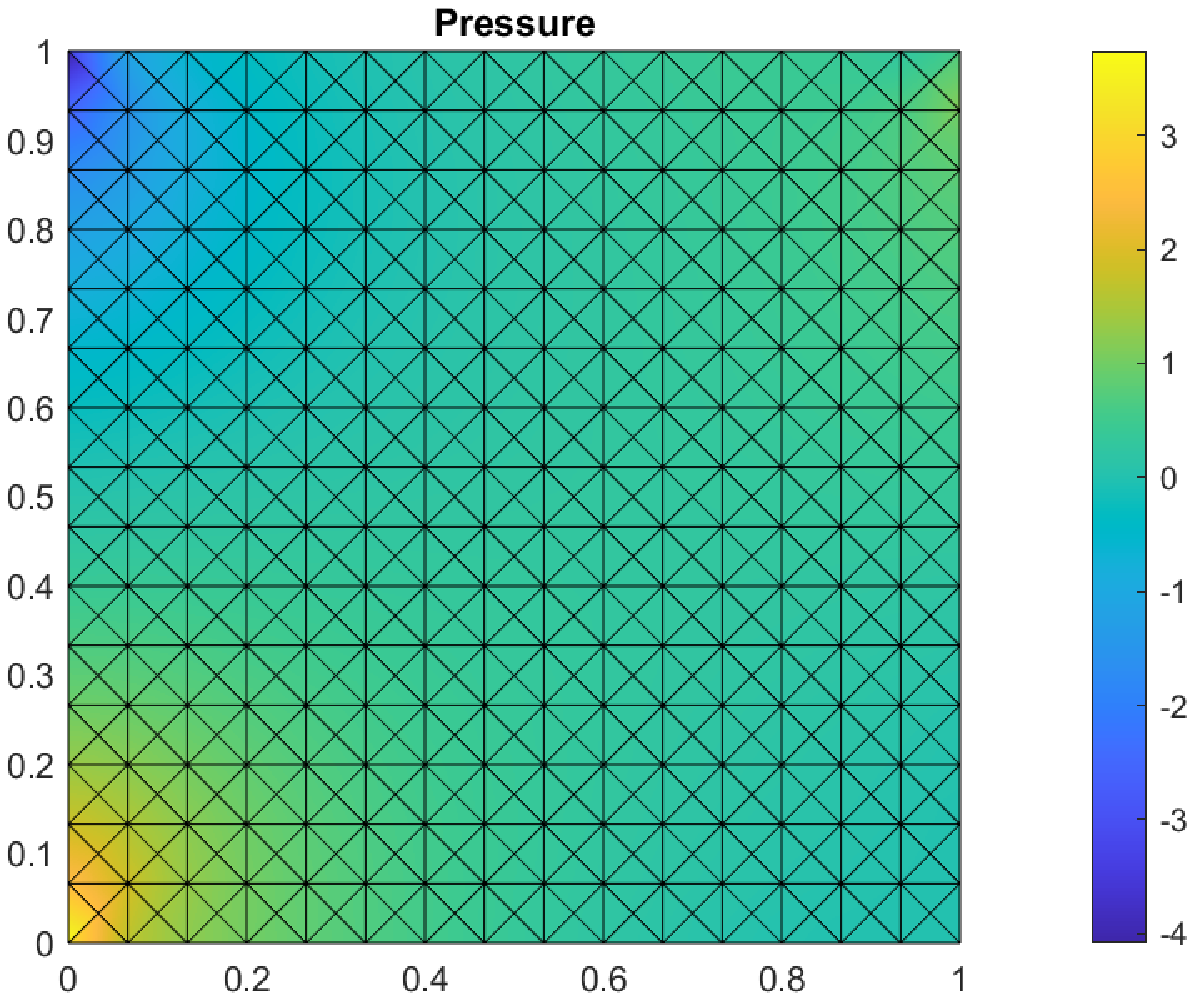}
\includegraphics[width=0.35\textwidth]{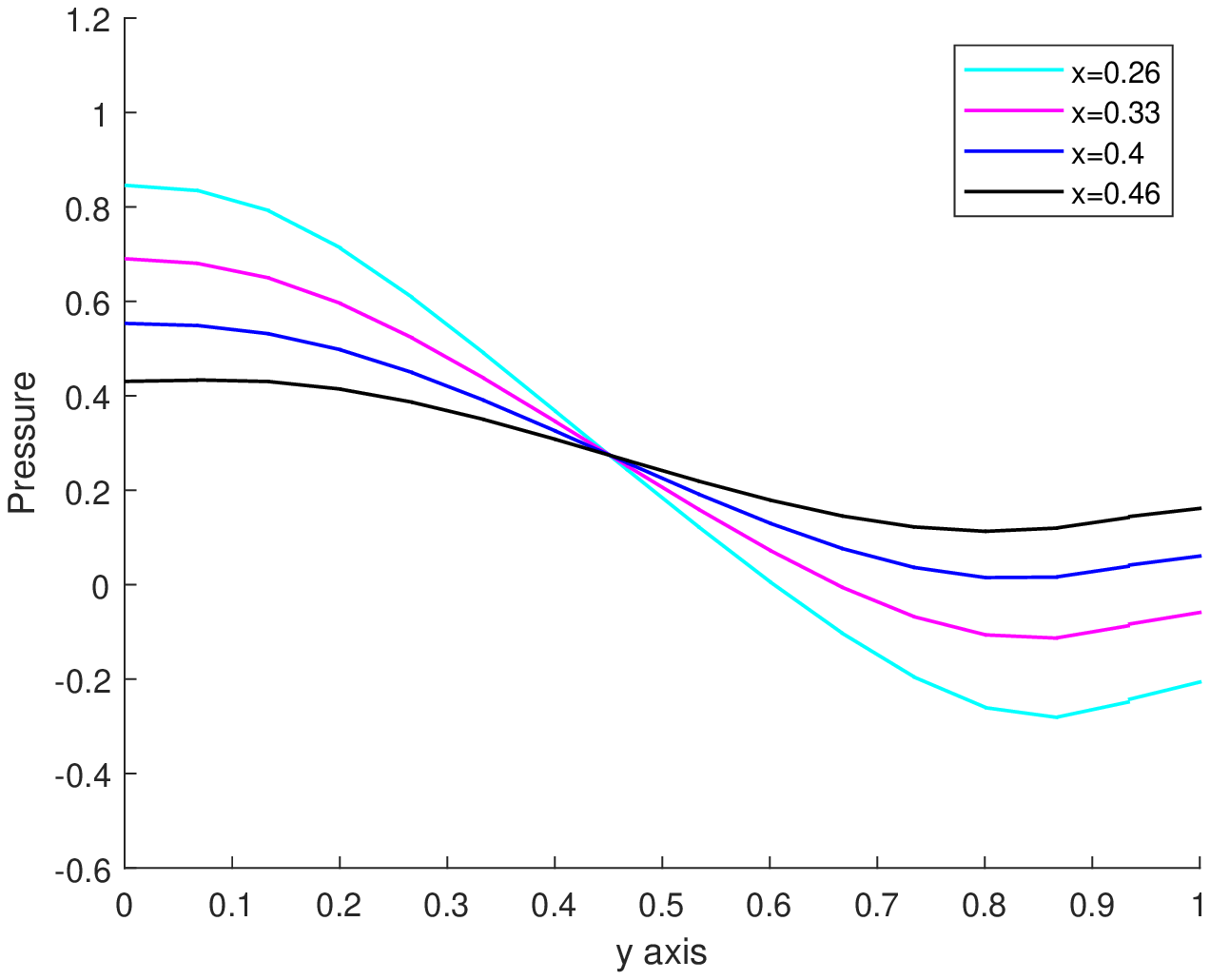}
\caption{Example~\ref{ex2}. Pressure field at $T=0.001$ (left) and pressure at different $x$-lines, $T=0.005$ (right).}
\label{ex2-pressure}
\end{figure}

\subsection{Anisotropic meshes}\label{ex3}

The computational domain is taken to be $\Omega=(0,1)^2$ and we set $\alpha=1,\lambda=1,\mu=1, K=I$. The simulation time is $T=0.01$ with time steps $\Delta t=h^2$. The exact displacement and pressure are given as
\begin{align*}
u=
\left(
  \begin{array}{c}
    \sin(2\pi t)\sin(2\pi y)\mbox{exp}(-x/\vartheta)\\
   \sin(2\pi t) \cos(2\pi y)\mbox{exp}(-x/\vartheta)\\
  \end{array}
\right),\quad p = \mbox{exp}(t)\mbox{ exp}(-x/\vartheta) x(1-x)^2y(1-y)^2.
\end{align*}
The used meshes are of Shishkin type (cf. \cite{Apel20}), see the example in Figure~\ref{ex3-mesh}. For a parameter $N\geq 2$ they are constructed by choosing a transition point parameter $ \delta\in (0,1)$ and generating a grid of points $(x^j,y^j)$ by
\begin{align*}
x^j& =
\begin{cases}
j\frac{2\delta}{N},\hspace{2.5cm} 0\leq j\leq \frac{N}{2},\; j\in \mathbb{N},\\
\delta+(j-\frac{N}{2})\frac{2(1-\delta)}{N}, \quad \frac{N}{2}<j\leq N,\; j\in \mathbb{N},
\end{cases}\\
y^i &= \frac{i}{N}\quad 0\leq i\leq N,\;  i\in \mathbb{N}.
\end{align*}
The transition point parameter $\delta$ is chosen as $\delta =\min\{\frac{1}{2},3\vartheta |\mbox{ln}(\vartheta)|\}$.
Here we choose $\vartheta=10^{-2}$ and the corresponding exact velocity and pressure at $t=0.01$ are displayed in Figures~\ref{ex3-sol} and \ref{ex3-mesh}, where the exponential boundary layer near $x=0$ is clearly visible. The layer has a width of $\mathcal{O}(\vartheta)$ and is present in the velocity and pressure solution. The convergence history against the number of degrees of freedom for various values of $c_0$ is shown in Figure~\ref{ex3-con} on square meshes and anisotropic meshes. We can observe that optimal convergence rates matching the theoretical results can be obtained, which indicates that our method can work well on anisotropic meshes and the method works for $c_0=0$. Further, the errors on anisotropic meshes are much smaller than that of rectangular meshes, which illustrates the superior performances of anisotropic meshes in particular for layered problem.

\begin{figure}[t]
\centering
\includegraphics[width=0.35\textwidth]{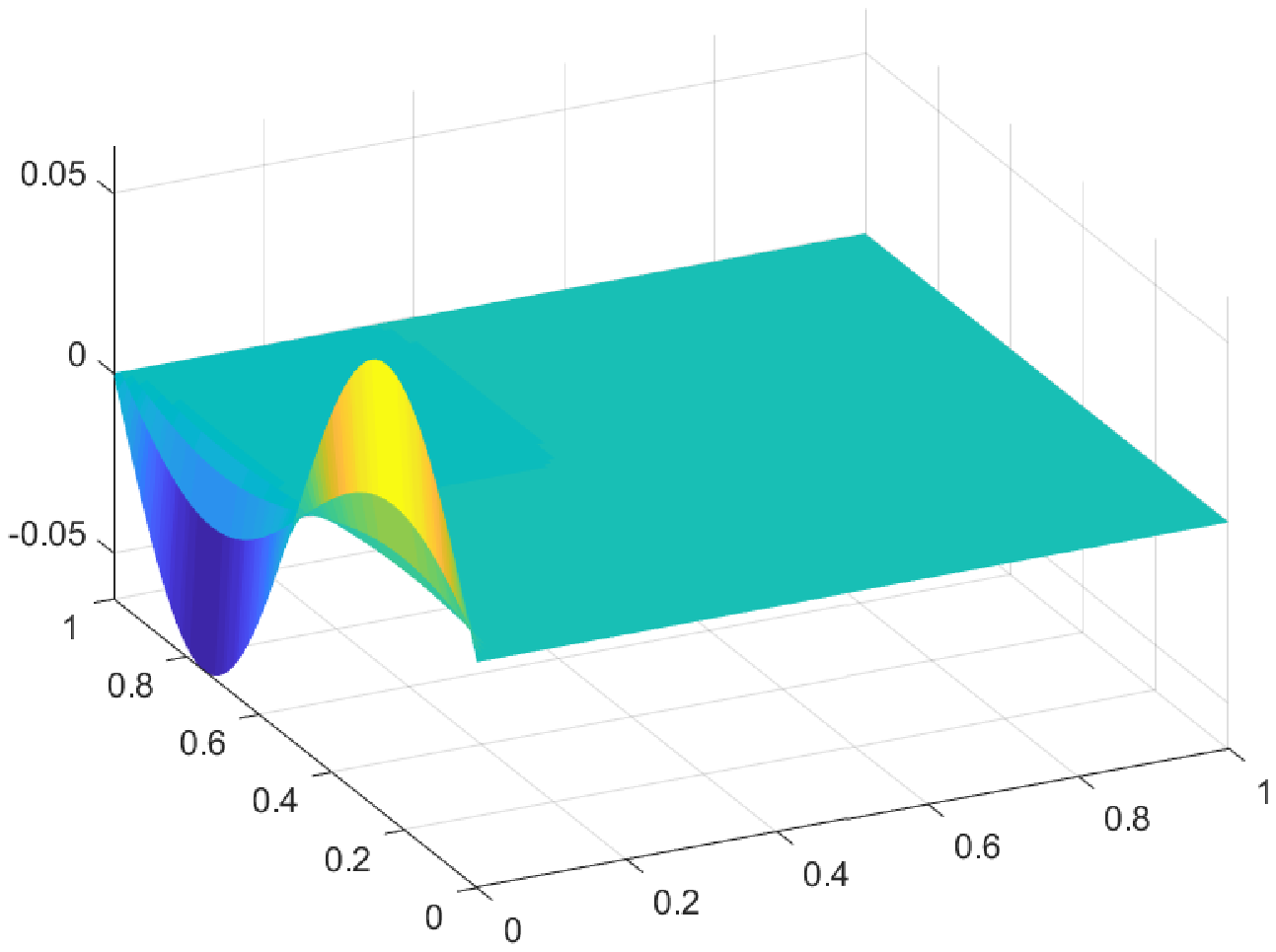}
\includegraphics[width=0.35\textwidth]{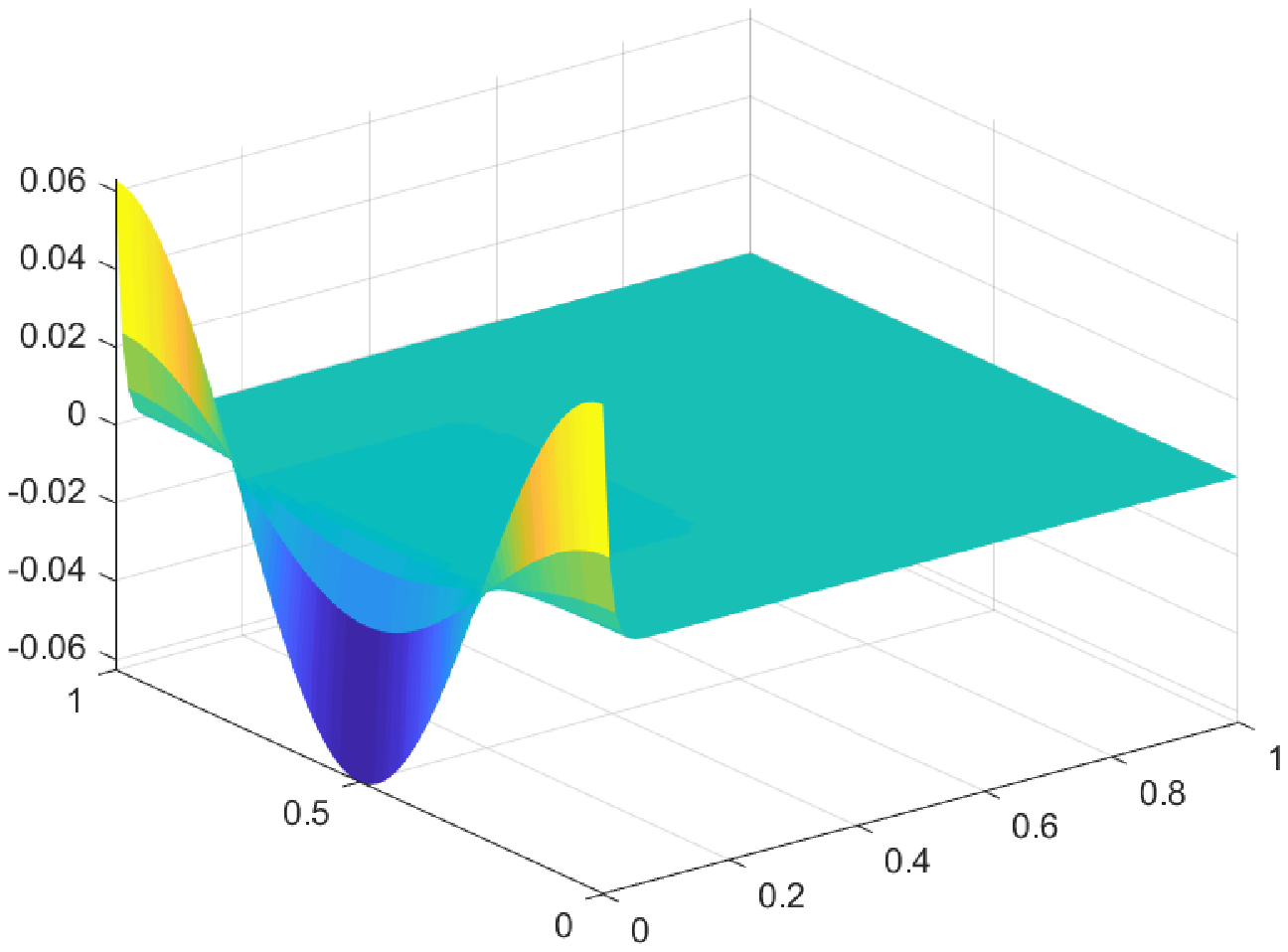}
\caption{Example~\ref{ex3}. Exact velocity: $u_1$ (left) and $u_2$ (right).}
\label{ex3-sol}
\end{figure}

\begin{figure}[t]
\centering
\includegraphics[width=0.35\textwidth]{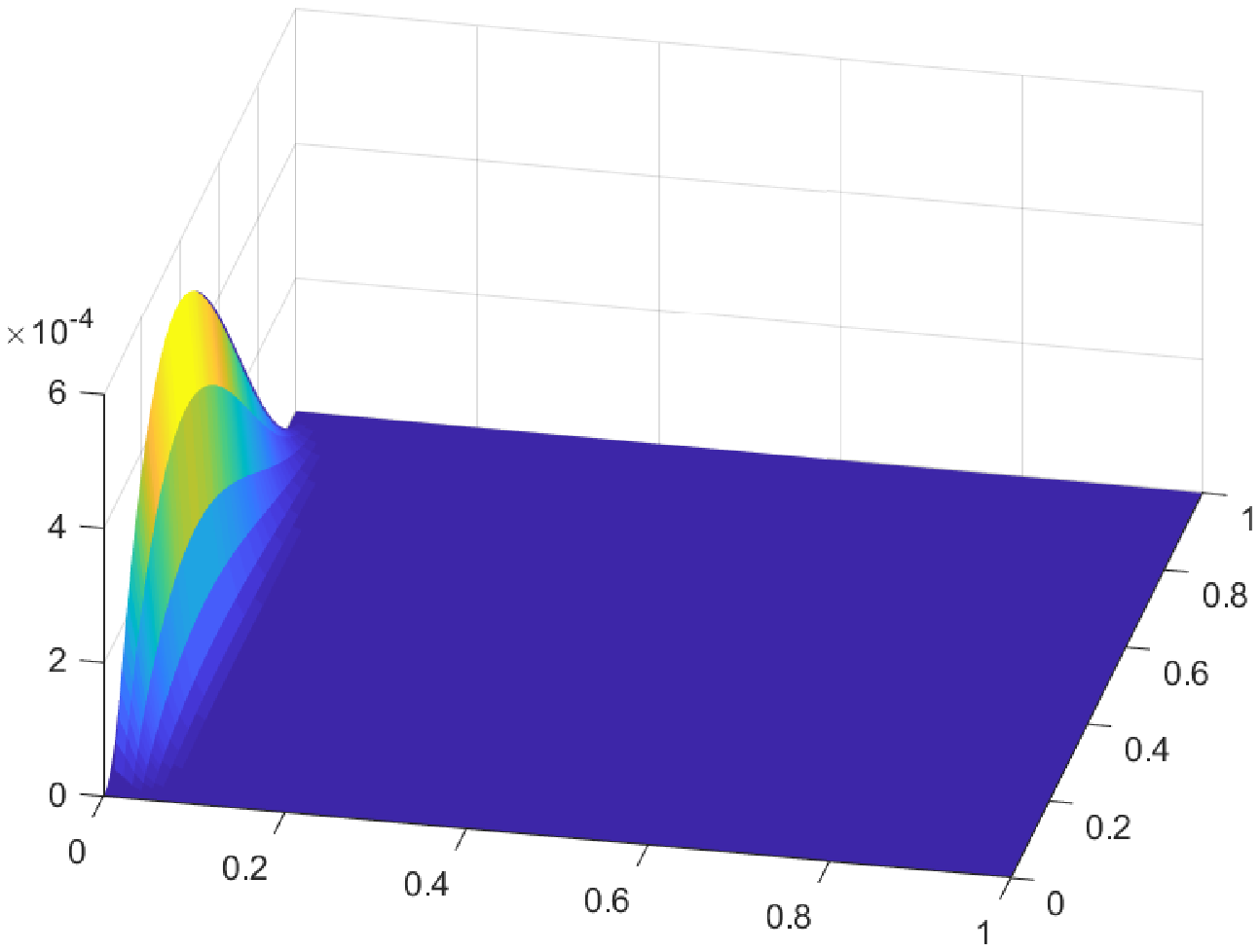}
\includegraphics[width=0.35\textwidth]{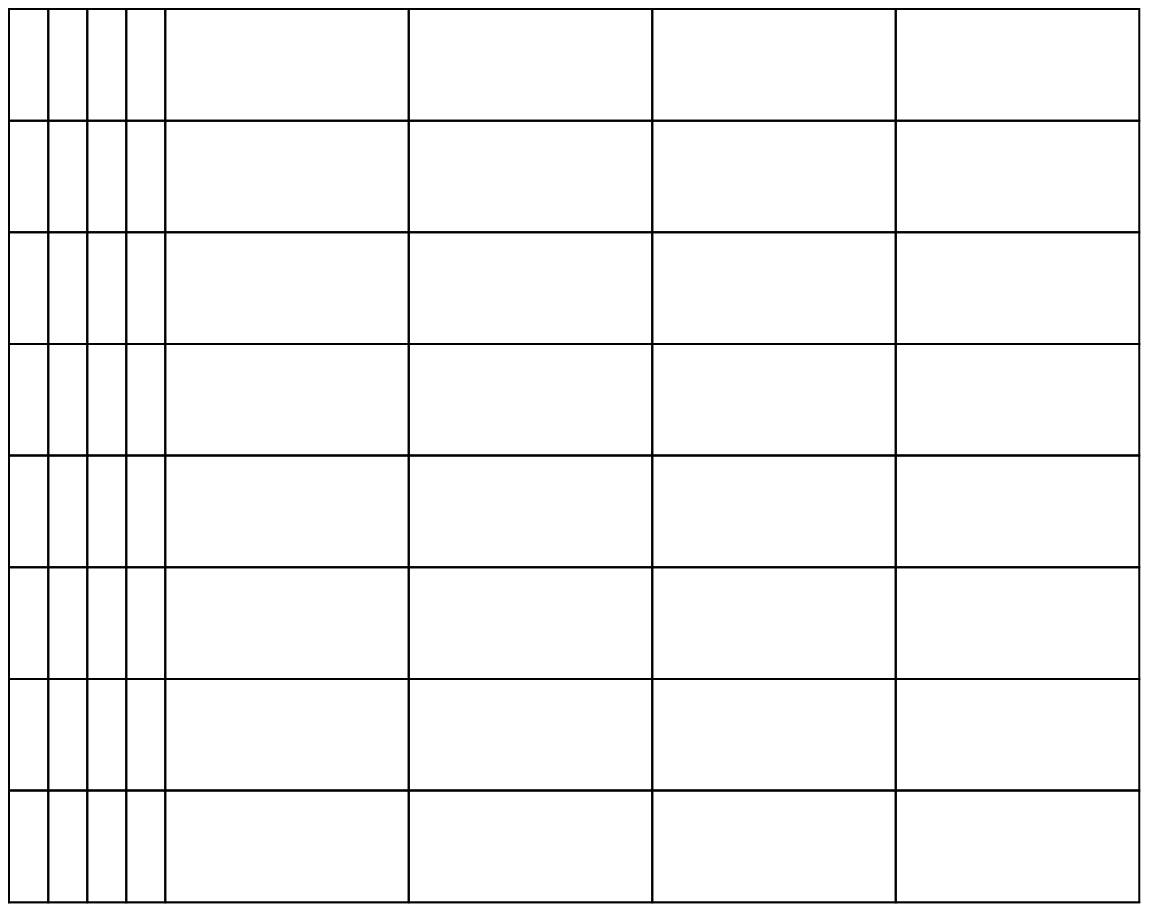}
\caption{Example~\ref{ex3}. Exact pressure (left) and example mesh (right) for $\vartheta=10^{-2}, N=8$ used in the simulations.}
\label{ex3-mesh}
\end{figure}

\begin{figure}[t]
\centering
\includegraphics[width=0.32\textwidth]{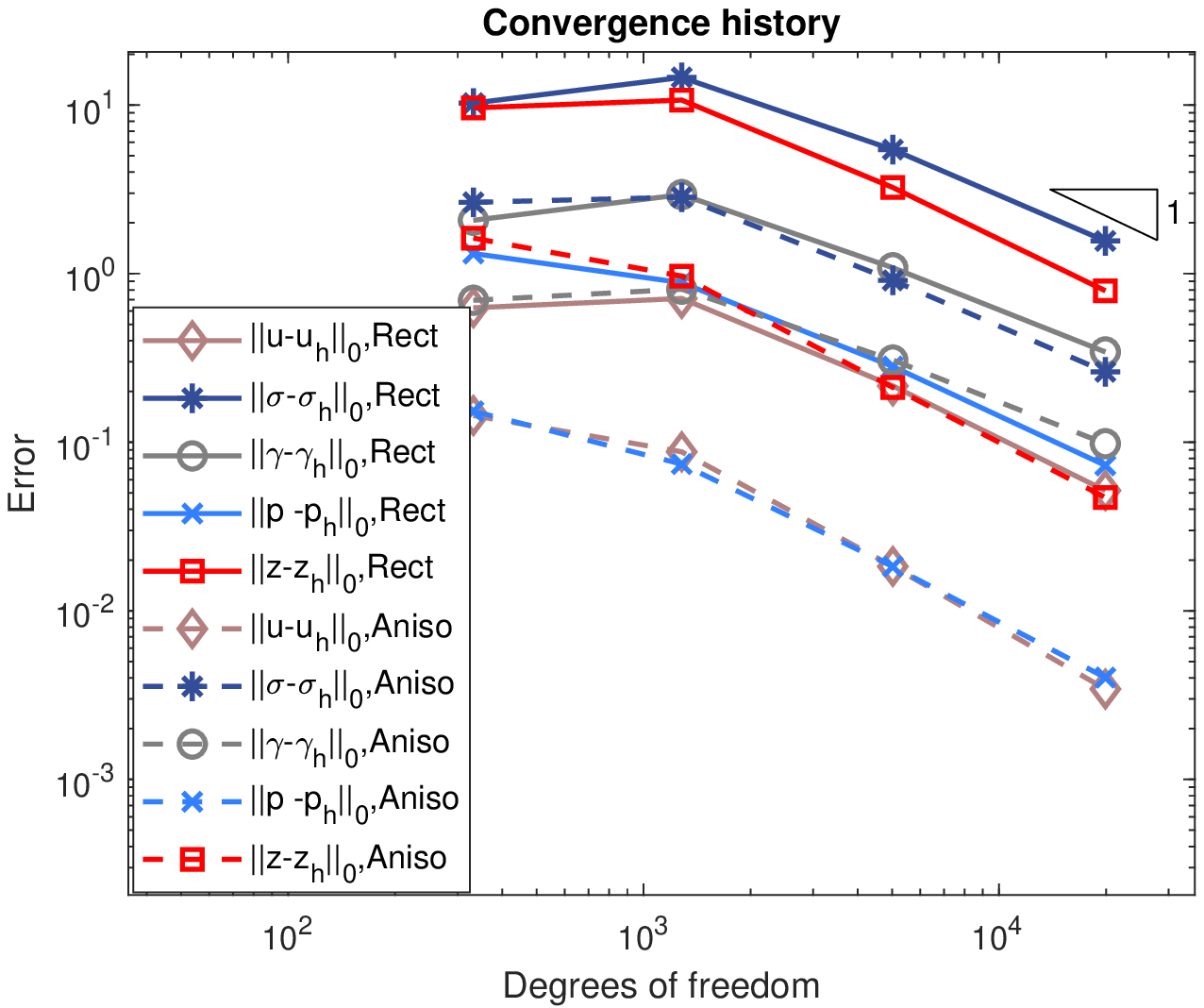}
\includegraphics[width=0.32\textwidth]{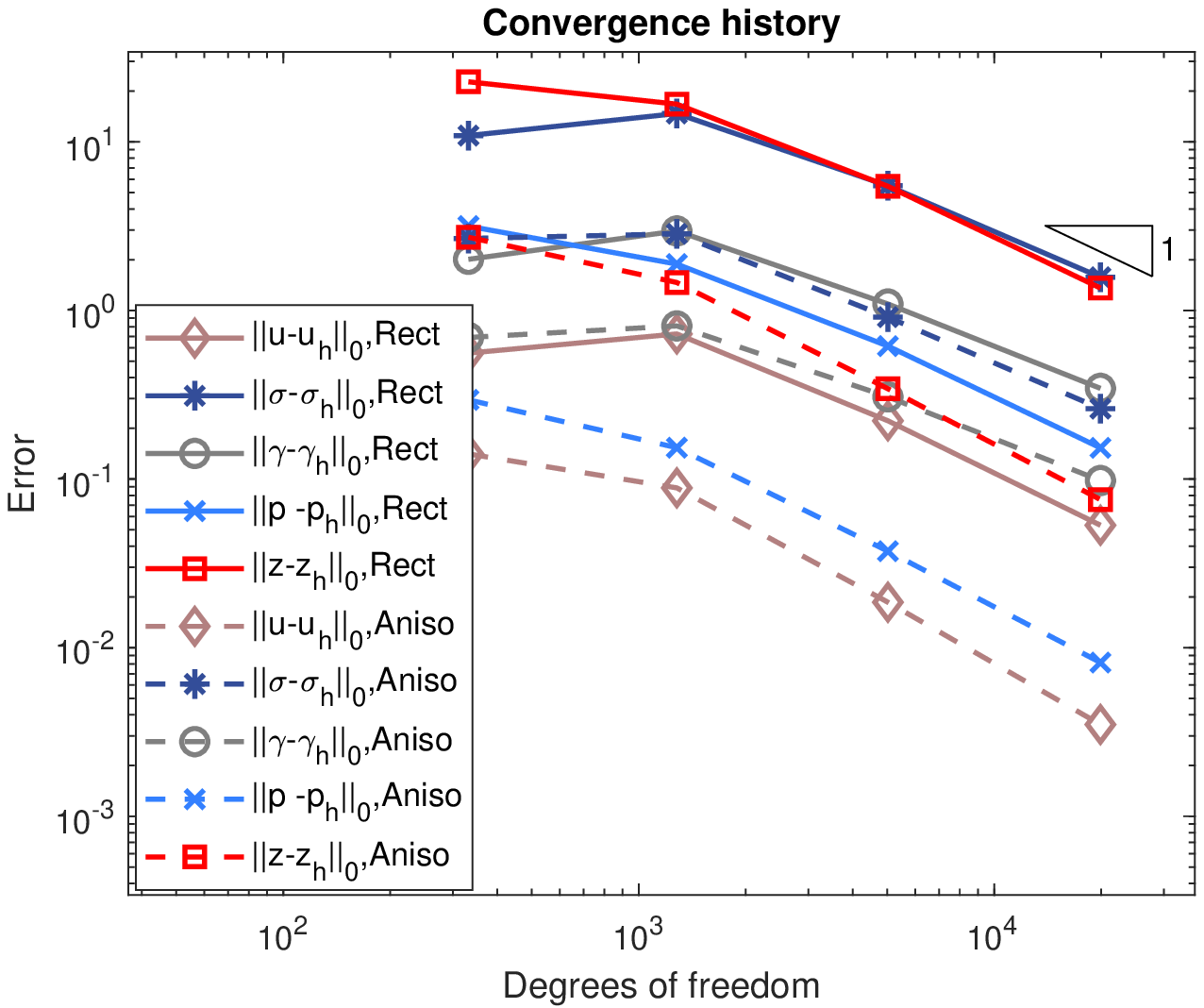}
\includegraphics[width=0.32\textwidth]{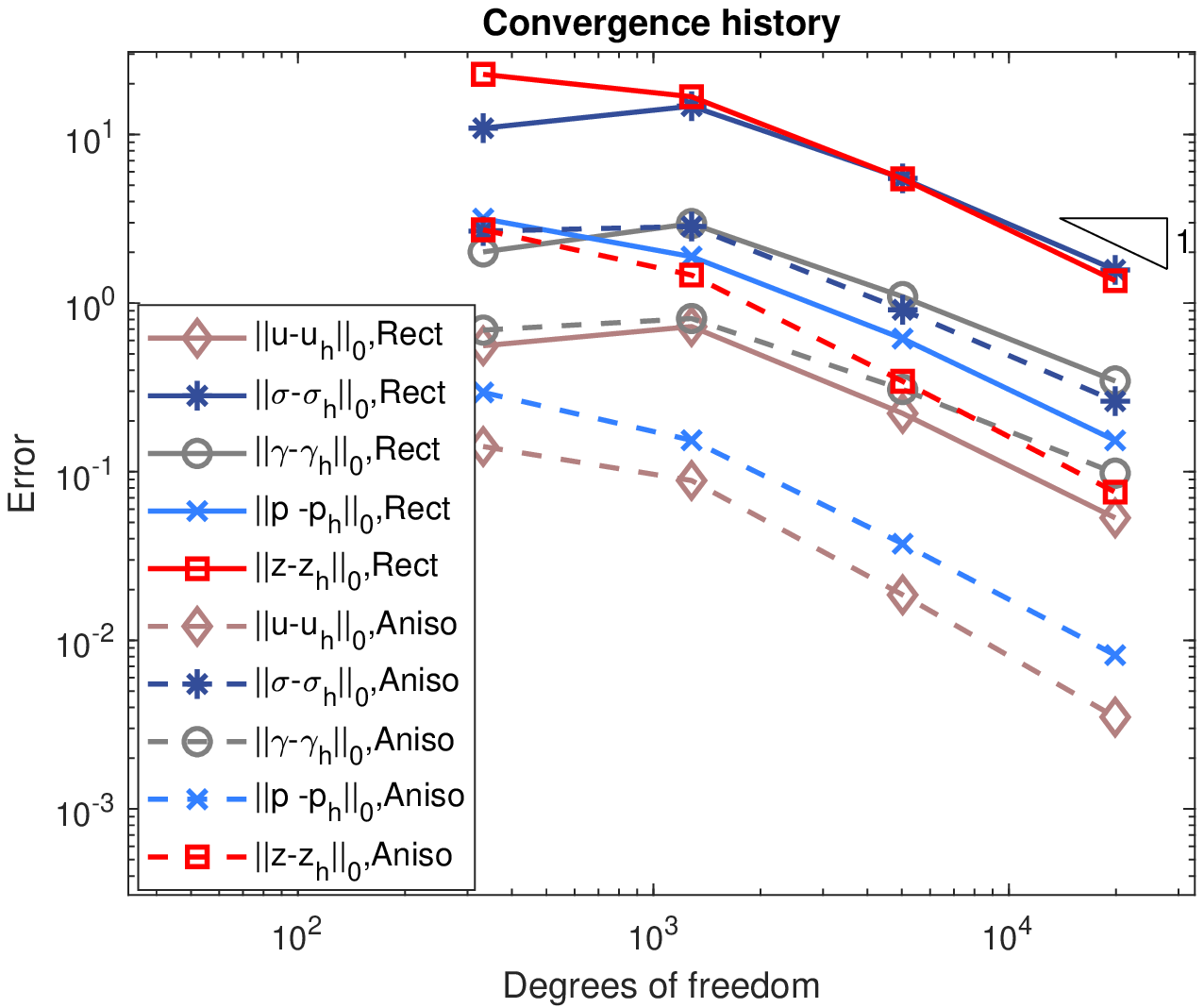}
\caption{Example~\ref{ex3}. Convergence history with uniform rectangular meshes (solid line) and anisotropic meshes (dashed line). $c_0=1$ (left), $c_0=10^{-3}$ (middle), and $c_0=0$ (right).}
\label{ex3-con}
\end{figure}

\subsection{A model problem of vertical compaction}\label{ex4}

In this example, we consider a more practical case motivated by the example given in \cite{Leake97}, where three sedimentary layers overlying an impermeable bedrock in a basin. The sediment stack totals 420m at the deepest point of the basin ($x = 0$m) but thins to 120m above the step ($x > 4000$m). The top two layers of the sequence are 20m thick each, see Figure~\ref{ex4-domain} for an illustration of the domain geometry. The first and third layers are aquifers; the middle layer is relatively impermeable to flow. The flow field initially is at steady state, but pumping from the lower aquifer reduces hydraulic head by 6m per year at the basin center. The head drop moves fluid away from the step. The fluid supply in the upper reservoir is limitless. The initial condition is set to zero and the boundary conditions for region A to E are described in Table~\ref{table:boundary}, in addition, we let $f=q=0$. The values for the parameters are given in Table~\ref{table:val}. The final simulation time is 10 years, where we take the simulation time to be $(0:360:360\times 10)$. Figure~\ref{ex4-sol} shows the plot for pressure and displacement at 2 years and 10 years.

\begin{figure}[t]
\centering
\includegraphics[width=0.4\textwidth]{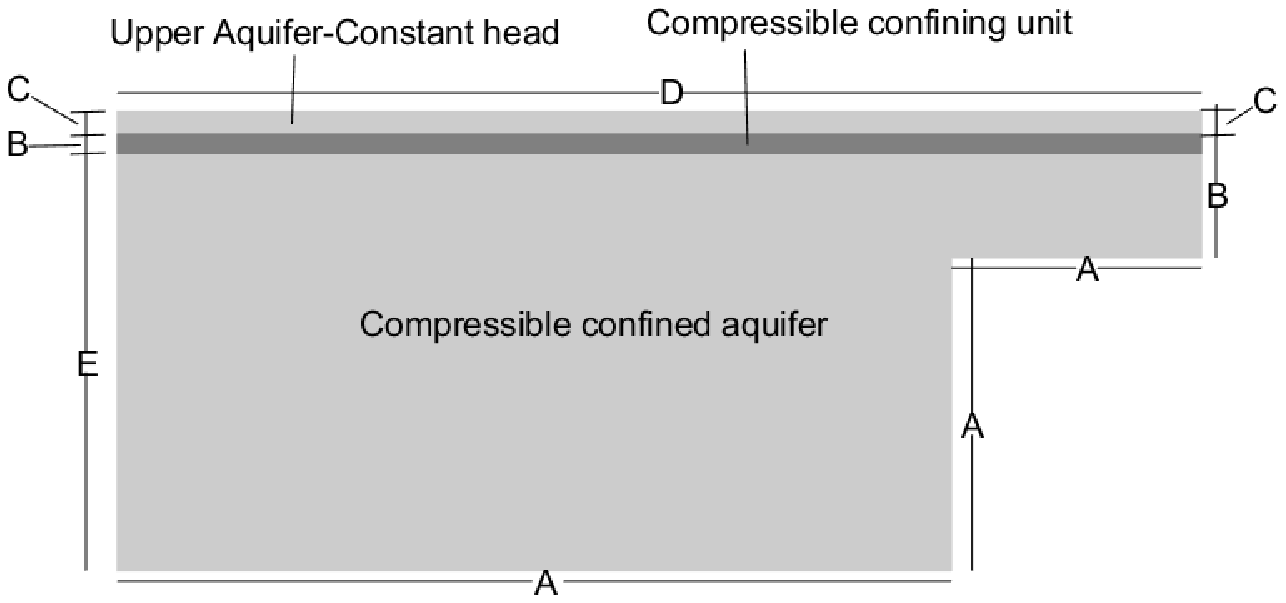}
\includegraphics[width=0.4\textwidth]{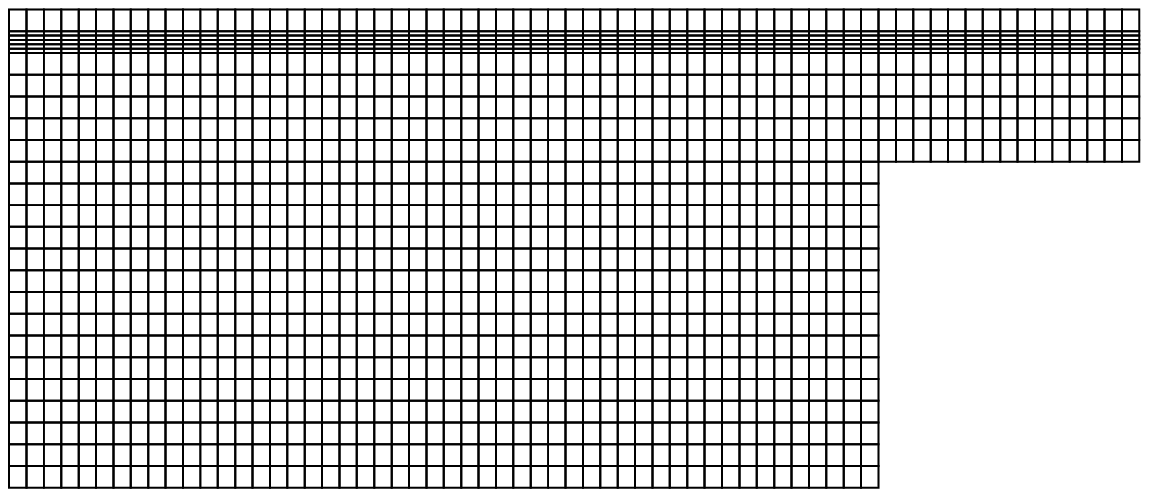}
\vskip -1cm
\caption{Example~\ref{ex4}. Model geometry showing boundary segments (left) and profile of the mesh used (right). Vertical exaggeration $\times 5$}
\label{ex4-domain}
\end{figure}

\begin{table}[t]
\centering
\begin{tabular}{l|l|l}
\hline
%WP1.i & Devising explicit time-marching schemes with unfitted meshes& Note\\
%WP1.ii &Carrying out stability analysis and error estimates& Conference\\
A & $z\cdot n=0$ & $u=(0,0)^T$ \\
B &$z\cdot n=0$ & $u_1=0$\\
C &$p=0$ & $u_1=0$\\
D &$p=0$& $\sigma n=(0,0)^T$\\
E &$p=H(t)$& $u_1=0$\\
\hline
\end{tabular}
\caption{Example~\ref{ex4}. Description of boundary conditions. $u=(u_1,u_2)^T$.}
\label{table:boundary}
\end{table}

\begin{table}[t]
\centering
\begin{tabular}{|l|l|l|}
\hline
Variable & Description & Value\\
\hline
$c_0$ & storage coefficient, aquifer layers & $10^{-6}\mbox{m}^{-1}$ \\
$c_0$ & storage coefficient, confining layers & $10^{-5}\mbox{m}^{-1}$ \\
$K$ &permeability, aquifer layers & 25 m/day \\
$K$ &permeability, confining layers & 0.01 m/day \\
$\alpha$ &Biot-Willis coefficient& 1\\
E & Young's modulus, aquifer layers & $8\cdot 10^8$ Pa\\
E & Young's modulus, confining layers & $8\cdot 10^7$ Pa\\
$\nu$ &Poisson ratio, all regions& 0.25\\
$H(t)$ &Declining head boundary& (6 m/year)$\cdot t$\\
\hline
\end{tabular}
\caption{Example~\ref{ex4}. Values of parameters.}
\label{table:val}
\end{table}

\begin{figure}[t]
\centering
\includegraphics[width=0.35\textwidth]{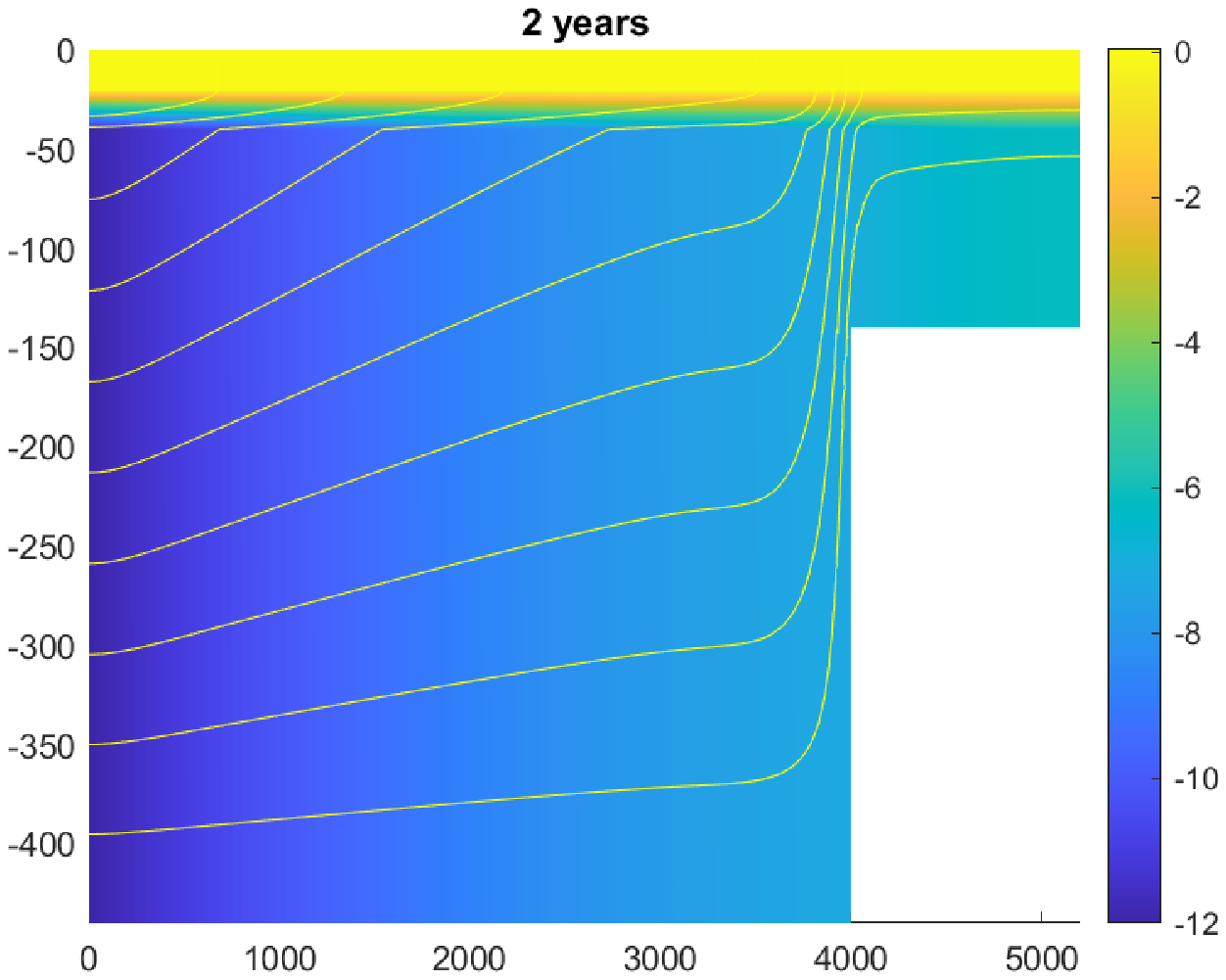}
\includegraphics[width=0.35\textwidth]{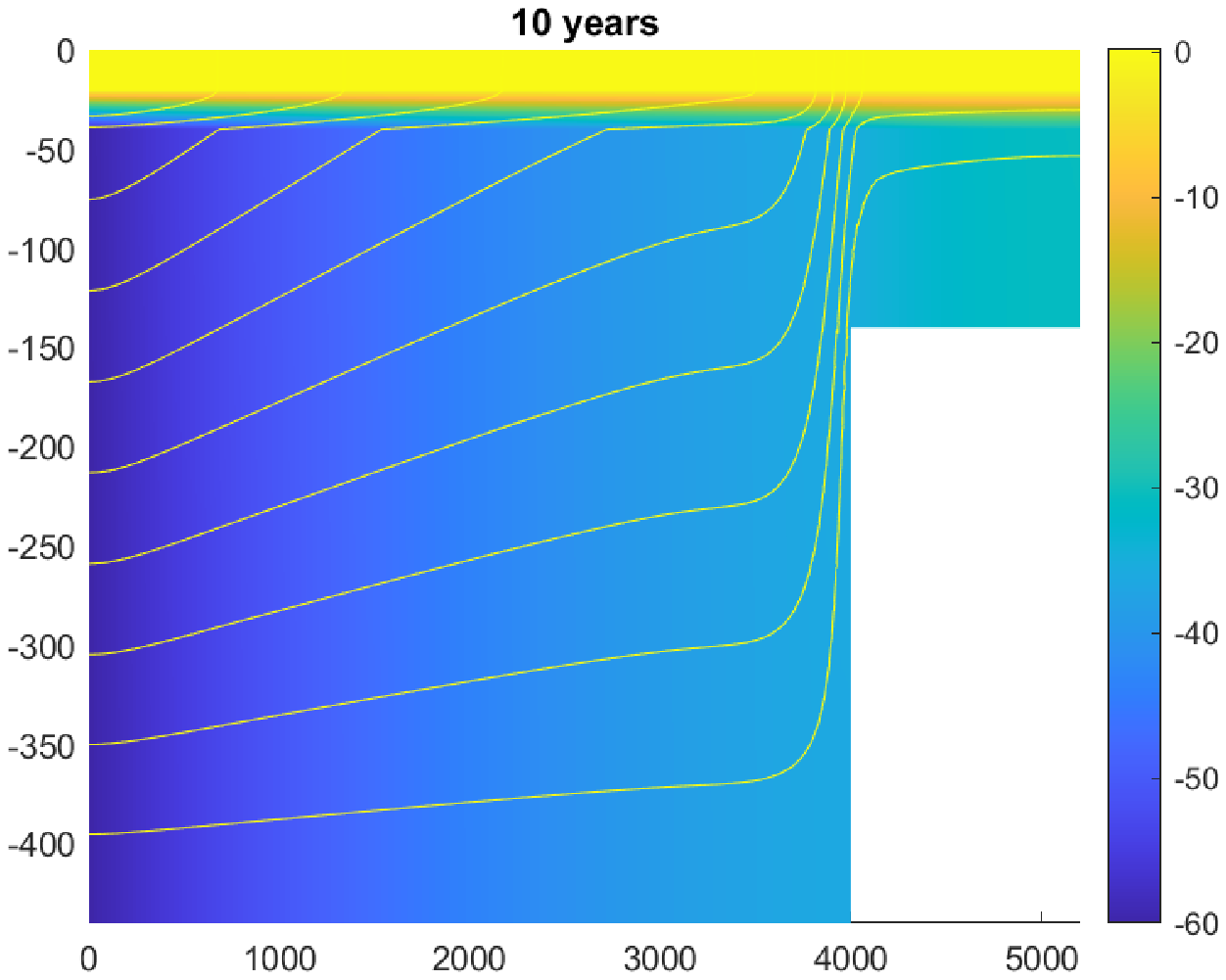}
\caption{Example~\ref{ex4}. Numerical results at 2 years (left) and 10 years (right). Surface: pressure; Contour: displacement ($|u_h|$).}
\label{ex4-sol}
\end{figure}

\section{Conclusion}
In this paper we analyzed a staggered DG method for a five-field formulation of the Biot system of poroelasticity on general polygonal meshes. The method is proved to converge in optimal rates for all the variables in their natural norms. In addition, the error estimates are independent of the storativity coefficient $c_0$ and are valid even for $c_0=0$, which is also confirmed by our numerical simulation. Several numerical experiments including cantilever bracket benchmark problem are presented, which further confirms the locking-free property of our method. In addition, the accuracy of our method is slightly influenced by the shape of the grid. Numerical results illustrate that our method is a good option for solving problems on anisotropic meshes. Moreover, our method can handle nonmatching meshes straightforwardly, which is advantageous in solving problem with local features. In the future we will extend this method to solve nonlinear poroelasticity.

\section*{Acknowledgment}

The research of Eric Chung is partially supported by the Hong
 Kong RGC General Research Fund (Project numbers 14304719 and 14302018)
 and CUHK Faculty of Science Direct Grant 2019-20 and NSFC/RGC Joint
Research Scheme (Project number HKUST620/15). The research of
Eun-Jae Park was supported by the National Research Foundation of
Korea (NRF) grant funded by the Ministry of Science and ICT
(NRF-2015R1A5A1009350 and NRF-2019R1A2C2090021).

\bibliographystyle{plain}
\bibliography{reference}

\end{document}